\newtheorem{theorem}{Theorem}
\newtheorem{definition}{Definition} 
\newtheorem{remark}{Remark}	   
\newtheorem{assumption}{Assumption}
\newtheorem{lemma}{Lemma}
\newtheorem{proposition}{Proposition}
\newtheorem{corollary}[theorem]{Corollary}
\numberwithin{equation}{section}
\newcommand{\Var}{\operatorname{Var}}
\newcommand{\E}{\operatorname{E}}
\newcommand{\sign}{\operatorname{sign}}
\begin{document}
\title{Asymptotic delay times of sequential tests based on $U$-statistics for early and late change points}
\author{Claudia Kirch\footnote{\, Otto-von-Guericke University Magdeburg, Institute for Mathematical Stochastics; Center for Behavioral Brain Sciences (CBBS); Magdeburg, Germany; claudia.kirch@ovgu.de} \and Christina Stoehr\footnote{\, Ruhr-University Bochum, Faculty of Mathematics,  Bochum, Germany; christina.stoehr@ruhr-uni-bochum.de} }
\maketitle

\begin{center}
\begin{minipage}{0.8\textwidth}
\begin{center}\textbf{Abstract}\end{center}
Sequential change point tests aim at giving an alarm as soon as possible after a structural break occurs while controlling the asymptotic false alarm error. For such tests it is of particular importance to understand how quickly a break is detected. 
While this is often assessed by simulations only, in this paper, we derive the asymptotic distribution of the delay time for sequential change point procedures based on U-statistics. This includes the difference-of-means (DOM) sequential test, that has been discussed previously, but also a new robust Wilcoxon sequential change point test. Similar to asymptotic relative efficiency in an a-posteriori setting, the results allow us to compare the detection delay of the two procedures. It is shown that the Wilcoxon sequential procedure has a smaller detection delay for heavier tailed distributions which is also confirmed by simulations.  While the previous literature only derives results for early change points, we obtain the asymptotic distribution of the delay time for both early as well as late change points. Finally, we evaluate how well the asymptotic distribution approximates the actual stopping times for finite samples via a simulation study.
\end{minipage}\\
\end{center}

\textbf{Keywords:} Stopping time; robust monitoring; Wilcoxon test; run length; CUSUM procedure

\section{Introduction}\label{sec.intro}
Monitoring time series for structural breaks have a long tradition in time series going back to \cite{Page1,Page2}.  
	In the seminal paper of	\cite{Chu} sequential change point tests are introduced that allow to control the asymptotic false alarm rate (type-I-error) while guaranteeing that the procedure will asymptotically stop with probability one if a change occurs by assuming the existence of a stationary historic data set. This approach has then been adopted  for a variety of settings and a variety of monitoring statistics. For example, \cite{aue2006change,Hor,huvskova2005monitoring} derive procedures in linear models, \cite{hlavka2012monitoring} for changes in the error distribution of	autoregressive time series,  \cite{gut2002truncated} for renewal processes while \cite{berkes2004sequential} consider changes in GARCH models. \cite{Est}  derive a unified theory based on estimating functions, that has been extended to different monitoring schemes by  \cite{kirch2018modified}. Bootstrap methods designed for the particular needs of sequential procedures have also been proposed by~\cite{hlavka2016bootstrap,huvskova2008bootstrapping,kirch2008bootstrapping}.
	In this paper, we will revisit the sequential test based on $U$-statistics that has been proposed in ~\cite{seqcp}, which includes a difference-of-means (DOM) as well as Wilcoxon sequential test. A similar setting for a-posteriori change point tests have been considered by~\cite{Csorg} for independent data, which has been extended to time series by \cite{Dehl}.

	Starting with \cite{aue2004delay}, who consider a mean change model,  several papers derived the limit distribution of the corresponding stopping times. For example \cite{aue2008monitoring,Fremdt} consider a modified test statistic for changes in the mean, \cite{aue2009delay,huvskova2009delay} consider changes in a linear regression models, while \cite{gut2009truncated} consider changes in renewal processes.
All of those papers only obtain results for early (sublinear) change points, where the time of change relative to the length of the historic data set vanishes asymptotically. In contrast, we will derive the corresponding results not only for such early change points but also for late (linear and even superlinear) relative to the length of the historic data set, which has been an open problem even for the mean change problem and the standard DOM statistic until now.

As in the setting of
\cite{Chu} we assume the existence of a stationary historic data set $X_1,\ldots, X_m$. Then, we monitor new incoming data by testing for a structural break after each new observation $X_{m+k},k\geq 1,$  based on a monitoring statistic $\Gamma(m,k)$. The procedure stops and detects a change as soon as $w(m,k)\left|\Gamma(m,k)\right|>c_m.$ The weight function $w(m,k)\ge 0$ is chosen such that $\sup_{k\ge 1}w(m,k)|\Gamma(m,k)|$ converges in distribution to some non-degenerate limit distribution (as the length of the historic data set $m$ grows to infinity) if no change occurs. If the threshold $c_m$ is chosen as the corresponding $(1-\alpha)$-quantile, the procedure has an asymptotic false alarm rate of $\alpha$ (while still having asymptotic power one under alternatives).

To illustrate, consider the mean change model
\begin{equation}\label{meanex}
X_{i,m}=Y_i+1_{\{i>k^*+m\}}d_m,\quad d_m\neq 0,
\end{equation}
where $\{Y_i\}_{i\geq 1}$ is a stationary time series with mean $\mu$. The change in the mean is given by $d_m$ and is allowed to depend on $m$.

For this situation the classical \textbf{difference-of-means (DOM) monitoring statistic} is given by
\begin{align}\label{cusumstat}
	\Gamma_D(m,k)=\frac{k}{m}\sum_{i=1}^{m}X_i-\sum_{j=m+1}^{m+k}X_j=\frac{1}{m}\sum_{i=1}^m\sum_{j=m+1}^{m+k}(X_i-X_j).
\end{align}
The corresponding sequential procedure has already been investigated by several authors including~\cite{aue2006change,Chu,Hor,huvskova2005monitoring}.
Clearly, this is a sequential version of a two-sample statistic that similarly to the two-sample $t$-test is not  robust. Given the good properties of a Wilcoxon/Mann-Whitney two-sample test it is promising to  consider the \textbf{Wilcoxon monitoring statistic}
\begin{align}\label{wilcoxstat}
	\Gamma_W(m,k)=\frac{1}{m}\sum_{i=1}^m\sum_{j=m+1}^{m+k}\left(1_{\{X_i<X_j\}}-1/2\right),
\end{align}
which was recently proposed by~\cite{seqcp}.
Both statistics are sequential $U$-statistics of the following type:
\begin{align}\label{detstat}
\Gamma(m,k)=\frac{1}{m}\sum_{i=1}^m\sum_{j=m+1}^{m+k}(h(X_i,X_j)-\theta),
\end{align}
where the kernel $h:\mathbb{R}^2\rightarrow\mathbb{R}$ is a measurable function and $\theta=\E(h(Y,Y_1))$ with $Y\stackrel{D}{=}Y_1$ is an independent copy of $Y_1$. 
In this framework, the DOM-kernel is given by $h_D(s,t)=s-t$ such that $\theta^D=\E\left(Y-Y_1\right)=0$. The Wilcoxon-kernel is given by $h_W(s,t)=1_{\{s<t\}}$, such that $\theta^W=P(Y<Y_1)=1/2$.

The stopping time of the corresponding sequential procedure is given by
	\begin{align}\label{defstop}
				\tau_m=\begin{cases}
				\inf\{k\geq 1: w(m,k)\,\left|\Gamma(m,k)\right|>c_{m}\},\\
				\infty,\quad \mbox{if } w(m,k)\,\left|\Gamma(m,k)\right|\leq c_{m}\mbox{ for all k},
				\end{cases}
				\end{align}
				where 
				$ w(m,k)^{-1}=m^{1/2}(1+k/m)$. The monitoring procedure stops as soon as the monitoring statistic $\Gamma(m,k)$ 
exceeds in absolute value a critical curve given by $c_m/w(m,k)$.
				If $c_m$ is chosen as the $(1-\alpha)$-quantile of the limit distribution as given in Theorem~\ref{as.H0}, the asymptotic false alarm rate is given by $\alpha$.  
				The above weight function is often chosen in the literature because the corresponding limit distribution as given in Theorem~\ref{as.H0} has a nice well-known structure (noting that the supremum is only taken over $(0,1)$ despite the infinite observation horizon).  \cite{seqcp} consider a much larger class of weight functions including in particular weight functions of the type
				\begin{align}\label{wgamma}
					w_{\gamma}(m,k)=m^{-1/2}\left(1+\frac km\right)^{-1}\left( \frac{k}{m+k} \right)^{-\gamma},\quad 0\le \gamma <\frac 1 2,
				\end{align}
				that also have this nice property. However, it is well known that a choice of $\gamma>0$ only results in a quicker detection for early changes (see also Remark~\ref{rem:gamma:early} below). Because the main focus of this paper lies on the analysis of the detection delay for late changes, where  the critical curve for larger values of $\gamma$ lies above the critical curve for $\gamma=0$ (see e.g. Figure 6.1 in \cite{diss}), the results are derived for $\gamma=0$ only. Nevertheless, for early changes the corresponding results for all $0\le \gamma <\frac 1 2$ have been considered in Section 5.1 in \cite{diss} and are summarized in Theorem~\ref{thm:delay:early} below.  
				\vspace{1mm}

This paper is organized as follows:
In Section~\ref{sec:as.delay} we derive the asymptotic delay times for $U$-statistics, where we first discuss the threshold selection in Section~\ref{sec_thre}.    In Section~\ref{sec_asym} we  obtain conditional results given the historic data, before we discuss corresponding consequences for the unconditional results in Section~\ref{sec:imp}. It turns out that for late changes, the influence of the historic data set to the asymptotic expected stopping time is no longer negligible. We explain how to obtain the standardizing sequences for the asymptotic results in Section~\ref{sec:derivation} giving a sketch of the main proof ideas along the way.
In Section~\ref{sec:sim} we  give some simulations indicating that the asymptotic result gives indeed a good approximation of the small sample behavior.
In Section~\ref{section_COM_Wil} we compare the DOM with the Wilcoxon procedure both based on theoretic considerations and by simulations.  After some conclusions in Section~\ref{section_conclusions} we finally provide the proofs in Section~\ref{section_proofs}.

\section{Asymptotic delay times}\label{sec:as.delay}
We consider the following model, which generalizes the above mean change model as given by \eqref{meanex},
\begin{equation}\label{meanmodel}
X_{i,m}=1_{\{1\leq i\leq k^*+m\}}Y_i+ 1_{\{i>k^*+m\}}Z_{i,m},\quad i\geq 1,
\end{equation}
where $\{Y_i\}_{i\in\mathbb{Z}}$ and $\{Z_{i,m}\}_{i\in\mathbb{Z}}$ are suitable stationary time series with unknown distribution, not necessarily centered fulfilling certain assumptions specified below. The distribution of the time series after the change and thus the change itself is allowed to depend on $m$. The change point fulfills $k^*=\lfloor \lambda m^{\beta}\rfloor$ with some unknown $\lambda>0$ and unknown $\beta\ge 0$.

The parameter $\beta$ effectively determines whether a change occurs early or late (compared to the length of the historic data set) and will influence the asymptotic distribution of the stopping time.
\begin{definition}
Changes $k^*=\lfloor \lambda m^{\beta}\rfloor$ with $0\leq \beta<1$ are classified as early or sublinear (in m) whereas changes with $\beta\geq 1$ are classified as late change. In the latter case we further distinguish between linear changes ($\beta=1$) and superlinear changes ($\beta>1$).
\end{definition}
In the previous literature, only the stopping times for early, i.e.\ sublinear, changes were obtained, while even for the DOM monitoring statistic the asymptotic distribution for late (i.e.\ linear and superlinear) changes has not been derived to the best of our knowledge.

Furthermore, define the (unknown) magnitude of the change $\Delta_m$ by
\begin{align}\label{eq_Delta}
	\Delta_m=\theta^*_m-\theta,\qquad
\theta^*_m=\E h(Y,Z_{1,m}),\qquad \theta=\E h(Y,Y_1),
\end{align}
where $Y\stackrel{D}{=}Y_1$ is independent of $Z_{1,m}$ and $Y_1$. We allow for fixed as well as local changes whose magnitude does decrease to 0, while at the same time being large enough to be asymptotically detectable with probability tending to one (see Remark~\ref{rem:test}):
 \begin{assumption}\label{change.ass}
$ $
\begin{itemize}
\item[(i)] $\Delta_m=O(1)$.
\item[(ii)] $\frac{\sqrt{m} |\Delta_m|}{c_m}\rightarrow \infty$.
\end{itemize}
\end{assumption}

For the DOM monitoring procedure with $h_D(s,t)=s-t$ we obtain by $\theta^D=\E\left( Y-Y_1 \right)=0$ 
 \begin{align}\label{delta.cusum}
	 \Delta^{D}_m=\theta^{*D}_m=\E(Y)-\E(Z_{1,m})=\E(Y_1)-\E(Z_{1,m}), 
 \end{align}
 such that in the mean change model~\eqref{meanex} it holds $\Delta^D_m=-d_m$.
 For the Wilcoxon monitoring procedure with $h_W(s,t)=1_{\{s<t\}}$ we obtain 
\begin{align}\label{delta.wil}
	&\Delta^{W}_m=P( Y<Z_{1,m})-P(Y<Y_1)=P( Y<Z_{1,m})-\frac 12.
\end{align}
such that in the mean change model~\eqref{meanex} it holds for $d_m>0$\begin{align*}
\Delta^{W}_m=
	P(Y_1\leq Y<Y_1+d_m)
\end{align*}
 and a similar expression for $d_m<0$.

 \subsection{Threshold selection}\label{sec_thre}
In this section, we discuss the selection of the threshold $c_m$.
Theorem~\ref{as.H0} states the limit distribution
of the monitoring statistic in the situation of no change (i.e. $k^*=\infty$) which allows to determine the threshold in such a way that the asymptotic false alarm rate is controlled by a previously chosen $\alpha$. Proposition~\ref{cinf} shows under which conditions the probability of a false alarm before a change occurs vanishes asymptotically. Most importantly, it shows that for late changes this can only be achieved if the threshold increases to infinity. This is in contrast to early changes as previously discussed in the literature.

In order to state the assumptions we need to introduce a version of Hoeffding's decomposition (see \cite{Hoeff}) which is widely used in the context of $U$-statistics. 
However, in contrast to the classical two-sample situations, the monitoring sample $\{X_{j,m}:m<j\le m+k\}$ contains both random variables following the distribution of $Y_i$ and those following $Z_{i,m}$ as soon as $m+k>k^*>0$. Therefore, additional terms appear taking this into account. Define
\begin{align}
	&h_1(s)=\E(h(s,Y_1)-\theta),\quad h^*_{1,m}(s)=\E(h(s,Z_{1,m})-\theta_m^*)\notag\\
	&h_2(t)=\E(h(Y_1,t)-\theta),\quad h^*_{2,m}(t)=\E(h(Y_1,t)-\theta_m^*),
\notag\\
&r(s,t)=h(s,t)-h_{1}(s)-h_{2}(t)-\theta,\quad r_m^*(s,t)=h(s,t)-h^*_{1,m}(s)-h^*_{2,m}(t)-\theta_m^*.\label{eq_hoeff}\end{align}
The terms $h_{1/2}$ and $r$ correspond to the usual Hoeffding's decomposition where both samples share the same distribution, while the terms $h_{1/2,m}^*$ and $r_m^*$ correpond to the ones where the second sample follows the distribution of $Z_{1,m}$.
Then, the following decomposition holds for $k>k^*$:
\begin{align}\label{GammaH1}
	\Gamma(m,k)&=\frac{k^*}{\sqrt{m}}\,S_{1,m}+\frac{k-k^*}{\sqrt{m}}\,S_{1,m}^*+\sqrt{k^*}\,S_{2,m}+\sqrt{k-k^*}\,S^*_{2,m}(k)+(k-k^*)\Delta_m+R_m(k)\notag\\
	&=:\widetilde{\Gamma}(m,k)+R_m(k).
\end{align}
Here, the signal part is given by $(k-k^*)\Delta_m$
as defined in \eqref{eq_Delta}, the first two summands are the historic parts involving
\begin{align*}
	S_{1,m}=\frac{1}{\sqrt{m}}\sum_{i=1}^m h_1(Y_i),\qquad S_{1,m}^*=\frac{1}{\sqrt{m}}\sum_{i=1}^m h^*_{1,m}(Y_i),
\end{align*}
the third and fourth summand are the monitoring part with 
\begin{align*}
	S_{2,m}=\frac{1}{\sqrt{k^*}}\sum_{j=m+1}^{m+k^*}h_2(Y_j),\qquad S^*_{2,m}(k)=\frac{1}{\sqrt{k-k^*}}\sum_{j=m+k^*+1}^{m+k}h^*_{2,m}(Z_{j,m}),
\end{align*}
while the last summand is a remainder term
\begin{align*}
&R_m(k)=\frac{1}{m}\sum_{i=1}^m\sum_{j=m+1}^{m+k^*}r(Y_i,Y_j)+\frac{1}{m}\sum_{i=1}^m\sum_{j=m+k^*+1}^{m+k}r_m^*(Y_i,Z_{j,m}).
\end{align*}
For $k\le k^*$ an analogous decomposition holds, where $k^*$ has to be replaced by $k$ making the terms involving $h_{1/2,m}^*$ and $r_m^*$ and $\Delta_m$ disappear (see also (2.1) in \cite{seqcp}), i.e.
\begin{align}\label{GammaH0}
\Gamma(m,k)=\sum_{j=m+1}^{m+k}h_2(Y_j)+\frac{k}{m}\sum_{i=1}^{m}h_1(Y_i)+\frac{1}{m}\sum_{i=1}^m\sum_{j=m+1}^{m+k}r(Y_i,Y_j).
\end{align}
For the DOM kernel with $h_D(s,t)=s-t$ we obtain by \eqref{delta.cusum}
\begin{align*}
&h^D_1(s)=h^{*D}_{1,m}(s)=s-E(Y_1),\quad h^D_2(t)=\E(Y_1)-t,\quad
 h^{*D}_{2,m}(t)=\E(Z_{1,m})-t,\\
&r^D(s,t)=r^{*D}(s,t)=0,
\end{align*}
where in the mean change model \eqref{meanex} it holds $\E(Z_{1,m})=\E(Y_1)+d_m$.

For the Wilcoxon kernel with $h_W(s,t)=1_{\{s<t\}}$ and a continuous $Y_1$ with distribution function $F$ it holds with \eqref{delta.wil}
\begin{align*}
&h^W_1(s)=\frac 12-F(s),\quad h^W_2(t)=F(t)-\frac 12,\quad r^W(s,t)=1_{\{s<t\}}+F(s)-F(t)-\frac 12\\
&h_{1,m}^{*W}(s)=\frac 12-P(Z_{1,m}\le s)-\Delta_m^W,\quad
h_{2,m}^{*W}(t)=F(t)-\Delta_m^W+\frac 12,\quad\\
&r^{*W}(s,t)=1_{\{s<t\}}+P(Z_{1,m}\le s)-F(t)+\Delta_m^W-\frac 32,
\end{align*}
where in the mean change model \eqref{meanex} it holds $P(Z_{1,m}\le s)=F(s-d_m)$.


\begin{assumption}\label{regass}
	Let $\{Y_i\}_{i\in\mathbb{Z}}$ be a stationary time series that fulfills the following assumptions for a given kernel function $h$ with the notation as in \eqref{eq_hoeff}:
	\begin{enumerate}
\item[(i)]
$\E\left(\left|\sum_{i=1}^m\sum_{j=k_1}^{k_2}r(Y_i,Y_j)\right|^2\right)\leq u(m)(k_2-k_1+1)\quad\mbox{for all } m+1\leq k_1\leq k_2$\\
with  $\frac{u(m)}{m^2}\log(m)^2\rightarrow 0$.
\item[(ii)]It holds for $k_m\to\infty$
	\begin{align*}
		\sup_{1\le k\le k_m}\frac{1}{\sqrt{k_m}}\left|\sum_{j=m+1}^{m+k}h_2(Y_j)\right|=O_P(1).
	\end{align*}
\end{enumerate}
\end{assumption}
The second assumption follows for example from a functional central limit theorem as in the next theorem. For a discussion of these assumptions for independent as well as dependent observations, we refer to Section~2.3 in \cite{seqcp}.

The proof of the following theorem can be found in \cite{seqcp} (Theorem 1 and Corollary 2), where the additional H\'ajek-R\'enyi-type inequality in that paper (as in \eqref{eq_hr_neu} below)  is only required for weight functions $w_{\gamma}$ as in \eqref{wgamma} with $\gamma>0$ but not for $\gamma=0$ as here.
\begin{theorem}
		\label{as.H0}
		Let Assumption \ref{regass} (i) be fulfilled in addition to the following functional central limit theorem (for any $T>0$)
	 	\begin{align*}
		\left\{\frac{1}{\sqrt{m}}\sum_{i=1}^{\lfloor m t\rfloor}\left(h_1(Y_i),h_2(Y_i)\right):0\le t\leq T\right\}\stackrel{D}{\rightarrow}\left\{\left({W}_1(t),{W}_2(t)\right):0\le t\leq T\right\},
	\end{align*}
where $\left\{\left({W}_1(t),{W}_2(t)\right):0< t\leq T\right\}$ is a non-degenerate centered bivariate Wiener process with $\sigma:=\Var({W_1}(1))=\Var({W_2}(1))$. 
Additionally, let the following H\'ajek-R\'enyi-type inequality hold: For any sequence $k_m>0$ it holds uniformly in $m$
	\begin{align*}
	\sup_{k\geq k_m}\frac{1}{k}\left|\sum_{j=1}^{k}h_2(Y_j)\right|=O_P\left(\frac{1}{\sqrt{k_m}}\right)\quad\mbox{as }k_m\rightarrow\infty.
	\end{align*}
	 Then, if no change occurs (i.e. $k^*=\infty$), it holds as $m\rightarrow\infty$
$$\sup_{k\geq 1}w(m,k)\left|\Gamma(m,k)\right|\stackrel{\mathcal{D}}{\rightarrow}\sigma \,\sup_{0<t<1}\left|W(t)\right|,$$
where $\{W(t):t>0\}$ is a standard Wiener processes.
\end{theorem}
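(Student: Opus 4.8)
The plan is to start from Hoeffding's decomposition under the null in \eqref{GammaH0}, which splits $\Gamma(m,k)$ into a monitoring partial sum $\sum_{j=m+1}^{m+k}h_2(Y_j)$, a historic term $\frac km\sum_{i=1}^m h_1(Y_i)$, and a degenerate remainder $R_m(k)=\frac1m\sum_{i=1}^m\sum_{j=m+1}^{m+k}r(Y_i,Y_j)$. Since the weight is $w(m,k)=m^{1/2}/(m+k)$, I would first prove that the remainder is uniformly negligible, $\sup_{k\ge1}w(m,k)\,|R_m(k)|=o_P(1)$, and then show that the weighted main part converges weakly to a Gaussian process whose supremum over the whole horizon coincides in law with $\sigma\sup_{0<t<1}|W(t)|$. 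Weak convergence of the main part on compact time windows, combined with a tail estimate for large $k$ and the negligibility of $R_m$, then delivers the claim by a continuous-mapping/Slutsky argument.

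For the remainder, set $T_k=\sum_{i=1}^m\sum_{j=m+1}^{m+k}r(Y_i,Y_j)$, so that $w(m,k)|R_m(k)|=|T_k|/(m^{1/2}(m+k))$. Assumption~\ref{regass}(i) controls the increments of $T_k$ in $L^2$, namely $\E|T_{k_2}-T_{k_1}|^2\le u(m)(k_2-k_1)$. A Móricz-type maximal inequality then gives $\E\max_{k\le N}|T_k|^2\lesssim u(m)\,N(\log N)^2$, and I would combine this with a dyadic blocking in $k$ (bounding the maximum on each block $k\in(2^\ell,2^{\ell+1}]$ and dividing by the denominator $m+2^\ell$). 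Summing the weighted block contributions, the hypothesis $u(m)m^{-2}(\log m)^2\to0$ is exactly what absorbs the logarithmic loss from the $O(\log m)$ dyadic levels and yields $\sup_{k\ge1}w(m,k)|R_m(k)|=o_P(1)$.

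For the main part I work on a fixed window $k=\lfloor mt\rfloor$, $t\in[0,T]$. Writing $\frac1{\sqrt m}\sum_{j=m+1}^{m+k}h_2(Y_j)=\frac1{\sqrt m}\sum_{j=1}^{\lfloor m(1+t)\rfloor}h_2(Y_j)-\frac1{\sqrt m}\sum_{j=1}^{m}h_2(Y_j)$ and invoking the assumed functional central limit theorem for $(h_1,h_2)$ together with the continuous-mapping theorem, the monitoring term converges to $\widetilde W_2(t):=W_2(1+t)-W_2(1)$ while the historic factor $\frac1{\sqrt m}\sum_{i=1}^m h_1(Y_i)$ converges to $W_1(1)$. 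As $\widetilde W_2$ is built from increments of the bivariate Wiener process strictly after time $1$, whereas $W_1(1)$ depends only on its values up to time $1$, the independent-increments property makes $\widetilde W_2$ independent of $W_1(1)$. Hence, on $[0,T]$,
\begin{equation*}
w(m,\lfloor mt\rfloor)\,\Gamma(m,\lfloor mt\rfloor)\dto U(t)=\frac{1}{1+t}\bigl(\widetilde W_2(t)+t\,W_1(1)\bigr).
\end{equation*}
A direct covariance computation, using this independence and $\Var(\widetilde W_2(t))=\sigma t$, shows that under the time change $s=t/(1+t)$ the centred Gaussian process $U$ has covariance proportional to $\min(s,s')$; thus $U$ agrees in law with a standard Wiener process on the transformed scale, rescaled by the factor $\sigma$ of Theorem~\ref{as.H0}. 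Since $t\mapsto t/(1+t)$ maps $(0,\infty)$ bijectively onto $(0,1)$, this gives $\sup_{t>0}|U(t)|\stackrel{D}{=}\sigma\sup_{0<s<1}|W(s)|$.

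The remaining, and in my view hardest, step is to pass from convergence on the compact window $[0,T]$ to the supremum over all $k\ge1$. Here the H\'ajek--R\'enyi-type inequality is decisive: for $k>Tm$ it yields $\frac1{m+k}\big|\sum_{j=1}^{m+k}h_2(Y_j)\big|=O_P((Tm)^{-1/2})$ uniformly, so that the weighted monitoring fluctuation $w(m,k)\sum_{j=m+1}^{m+k}h_2(Y_j)$ is uniformly $O_P(T^{-1/2})$ over the tail. Consequently, on $\{k>Tm\}$ the quantity $w(m,k)|\Gamma(m,k)|$ is, up to a term that is small in $T$, governed by the historic contribution $\frac{k/m}{1+k/m}\cdot\frac1{\sqrt m}\sum_{i=1}^m h_1(Y_i)$, which exactly matches the behaviour of the limit $U(t)\to W_1(1)$ as $t\to\infty$ (i.e.\ $s\to1$). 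Letting first $m\to\infty$ on each compact window, then $T\to\infty$, and invoking the remainder bound and Slutsky's theorem, I obtain $\sup_{k\ge1}w(m,k)|\Gamma(m,k)|\dto\sigma\sup_{0<t<1}|W(t)|$, as claimed.
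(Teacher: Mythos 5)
Your proposal is correct, and it is essentially a self-contained reconstruction of the argument that the paper delegates: the paper's own ``proof'' of Theorem~\ref{as.H0} is a two-line citation of Theorem~1 and Corollary~2 in \cite{seqcp} (with the observation that for $\gamma=0$ one may take $\tau=0$ there), and the proof carried out in that reference follows the same route you take. Your individual steps check out: the decomposition \eqref{GammaH0}; the Mo\'ricz-type maximal bound $\E\max_{k\le N}|T_k|^2\lesssim u(m)N(\log N)^2$ with dyadic blocking, where summing the blocks indeed produces $\sqrt{u(m)}\log m/m=o(1)$, so Assumption~\ref{regass}(i) is used exactly as intended (this matches the paper's use of Theorem~3 in \cite{Momineq} elsewhere); the covariance computation, since with $\widetilde W_2(t)=W_2(1+t)-W_2(1)$ independent of $W_1(1)$ one gets $\Cov(U(t),U(t'))=\sigma^2\,t/(1+t)$ for $t\le t'$, i.e.\ $\sigma^2\min(s,s')$ under $s=t/(1+t)$, which maps $(0,\infty)$ onto $(0,1)$; and the tail step, where the assumed H\'ajek--R\'enyi inequality with $k_m=(1+T)m$ gives $\sup_{k>Tm}w(m,k)\left|\sum_{j=m+1}^{m+k}h_2(Y_j)\right|=O_P(T^{-1/2})$ while the historic part on the tail differs from its compact-window analogue by $O_P(1/(1+T))$, so the usual ``first $m\to\infty$, then $T\to\infty$'' sandwich closes the proof. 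It is also worth noting that your argument confirms the paper's remark preceding the theorem: you never need the early-$k$ H\'ajek--R\'enyi inequality \eqref{eq_hr_neu} (only the tail version assumed in the statement), which is precisely why the extra condition of \cite{seqcp} is dispensable for $\gamma=0$.
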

If the assumption $\Var({W_1}(1))=\Var({W_2}(1))$ is not fulfilled, the limit distribution is more complicated (see Theorem~1 in \cite{seqcp}). However, both  the DOM and the Wilcoxon-kernel fulfill this assumption.

\begin{remark}\label{rem:test}
	The theorem shows that the monitoring procedure has asymptotic size $\alpha$ if the threshold $c_m$ is chosen as the $(1-\alpha)$-quantile $q_{1-\alpha}$ of the limit distribution in Theorem \ref{as.H0}. Furthermore, Theorem 3 and the respective proof in \cite{seqcp} imply that changes are detected asymptotically with probability one as $\sup_{k\geq 1}w(m,k)\left|\Gamma(m,k)\right|\stackrel{\mathcal{P}}{\rightarrow}\infty$ under Assumption \ref{change.ass} (ii); for more details we refer to \cite{seqcp}.
\end{remark}
Under the following assumptions the probability of a false alarm before the change occurs converges to zero.  This is necessary in order to have stopping times that are asymptotically not contaminated by false alarms. 
\begin{assumption}\label{ass_cm}
Let the following assumptions on the threshold $c_m$ hold:
\begin{align}
	&\liminf_{m\to \infty} c_m>0,
	\label{eq_lower_cm}\\
\label{cms}
&	P\left(\frac{|S_{1,m}|}{c_m}\frac{\lambda}{m^{1-\beta}+\lambda}<1\right) \to 1.
\end{align}
where $S_{1,m}:=\frac{1}{\sqrt{m}}\sum_{i=1}^m h_1(Y_i)$ and $\lambda$ and $\beta$ are defined by $k^*=\lfloor \lambda m^{\beta}\rfloor$. For late changes let $c_m\to\infty$. \end{assumption}
For a bounded sequence of critical values $c_m=O(1)$ assertion \eqref{cms} cannot be fulfilled for late changes if $S_{1,m}$ fulfills a central limit theorem  (an assumption that is required to derive the limit distribution in the no-change situation).
On the other hand,
for $S_{1,m}=O_P(1)$ assertion \eqref{cms} is automatically fulfilled as soon as $c_m\max(1,m^{1-\beta})\to\infty$, which holds for any sequence fulfilling \eqref{eq_lower_cm}  (including using asymptotic quantiles as critical values) for early changes, and for any sequence $c_m\to\infty$ for late changes.

\begin{proposition}\label{cinf}
Under Assumptions \ref{regass} and \eqref{ass_cm} it holds
\begin{align}\label{ph0}
P\left(\sup_{1\leq k\leq k^*}\frac{\left|\Gamma(m,k)\right|}{\sqrt{m}\left(1+\frac km\right)}>c_m\right)\rightarrow 0.
\end{align}
\end{proposition}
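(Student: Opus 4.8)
The plan is to exploit that for $k\le k^*$ no post-change observation has yet entered the monitoring window, so $\Gamma(m,k)$ coincides with its no-change version and admits the decomposition \eqref{GammaH0} into a monitoring part $M_k=\sum_{j=m+1}^{m+k}h_2(Y_j)$, a historic part $H_k=\frac{k}{m}\sum_{i=1}^m h_1(Y_i)=\frac{k}{\sqrt m}\,S_{1,m}$, and a remainder $R_k=\frac1m\sum_{i=1}^m\sum_{j=m+1}^{m+k}r(Y_i,Y_j)$. Writing $w(m,k)=\sqrt m/(m+k)$ and using the triangle inequality, it suffices to bound the three weighted suprema $\sup_{1\le k\le k^*}w(m,k)|M_k|$, $\sup_{1\le k\le k^*}w(m,k)|H_k|$ and $\sup_{1\le k\le k^*}w(m,k)|R_k|$ separately, and to show that their sum stays below $c_m$ with probability tending to one. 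The historic part is the leading contribution for late changes, and assumption \eqref{cms} is exactly what keeps it under control.

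For the historic part, $w(m,k)|H_k|=\frac{k}{m+k}|S_{1,m}|$ is monotone in $k$, so its supremum over $1\le k\le k^*$ equals $\frac{k^*}{m+k^*}|S_{1,m}|$. Since $k^*=\lfloor\lambda m^\beta\rfloor$, the factor $\frac{k^*}{m+k^*}$ agrees with $\frac{\lambda}{m^{1-\beta}+\lambda}$ up to a multiplicative factor that is bounded and bounded away from zero, so that this term is controlled exactly as in \eqref{cms}. For the remainder, Assumption \ref{regass}(i) yields $\E|R_k-R_{k'}|^2\le m^{-2}u(m)|k-k'|$, i.e.\ increments whose variance is bounded by $m^{-2}u(m)$; a H\'ajek-R\'enyi/Menshov-Rademacher maximal inequality together with a dyadic splitting of the range at $k=m$ then gives $\sup_{1\le k\le k^*}w(m,k)|R_k|=o_P(1)$, the decisive input being $u(m)m^{-2}\log^2(m)\to0$ (which handles both the block $k\le m$, where the weight is of order $m^{-1/2}$, and the blocks $k>m$, where the extra $\frac{1}{m+k}$ decay compensates the growth of the partial sums).

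For the monitoring part, $w(m,k)|M_k|=\frac{\sqrt m}{m+k}\bigl|\sum_{j=m+1}^{m+k}h_2(Y_j)\bigr|$. For early changes, Assumption \ref{regass}(ii) with $k_m=k^*$ bounds $\sup_{k\le k^*}|M_k|$ by $O_P(\sqrt{k^*})$, so the weighted supremum is $O_P(\sqrt{k^*/m})=O_P(\sqrt{\lambda}\,m^{(\beta-1)/2})=o_P(1)$. For late, and in particular superlinear, changes this global normalization is too crude near $k\approx m$, and one instead invokes the H\'ajek-R\'enyi-type inequality for the partial sums of $h_2(Y_j)$ (the same inequality underlying Theorem \ref{as.H0}), which yields the local decay $\frac{1}{m+k}\bigl|\sum_{j=m+1}^{m+k}h_2(Y_j)\bigr|=O_P(m^{-1/2})$ uniformly in $k$, hence $\sup_{1\le k\le k^*}w(m,k)|M_k|=O_P(1)$. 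Collecting the three bounds, each weighted supremum is $o_P(c_m)$: the remainder is $o_P(1)$, the monitoring part is $o_P(1)$ for early changes and $O_P(1)$ for late ones, and both are $o_P(c_m)$ since $\liminf_m c_m>0$ for early changes while $c_m\to\infty$ for late ones; the historic term is $o_P(c_m)$ by \eqref{cms}, which in the regime $S_{1,m}=O_P(1)$ highlighted after Assumption \ref{ass_cm} makes $\frac{1}{c_m}\frac{\lambda}{m^{1-\beta}+\lambda}|S_{1,m}|\to0$. Therefore $\sup_{1\le k\le k^*}w(m,k)|\Gamma(m,k)|=o_P(c_m)$ and \eqref{ph0} follows.

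The step I expect to be the main obstacle is the monitoring part in the superlinear regime $\beta>1$, where $k^*/m\to\infty$: the supremum then ranges over a rescaled-time horizon $t=k/(m+k)$ approaching the whole interval $(0,1)$, and the crude bound from Assumption \ref{regass}(ii) blows up like $m^{(\beta-1)/2}$. This is precisely the point where the classical early-change arguments fail and where the local H\'ajek-R\'enyi control becomes indispensable. Conceptually, the genuinely new feature absent in the early-change literature is that the historic average $S_{1,m}$ contributes non-negligibly through $\frac{k^*}{m+k^*}|S_{1,m}|$, which is why controlling the false alarm for late changes forces $c_m\to\infty$ and the tailored condition \eqref{cms} rather than a bounded threshold.
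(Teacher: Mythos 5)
Your proposal is correct in substance and follows essentially the same route as the paper's proof: the same decomposition \eqref{GammaH0} into monitoring, historic and remainder parts; the same observation that the weighted historic term has supremum $\frac{k^*}{m+k^*}|S_{1,m}|$, which is exactly what \eqref{cms} is tailored to control; a dyadic maximal-inequality argument for the remainder, which the paper outsources to Lemma 2 in \cite{seqcp} (the $\log^2(m)$ in Assumption \ref{regass}(i) is precisely the dyadic price you identify); and the dichotomy $\liminf_m c_m>0$ versus $c_m\to\infty$ from Assumption \ref{ass_cm} to absorb the $o_P(1)$ resp.\ $O_P(1)$ terms. The one place you deviate is the superlinear monitoring part: you invoke the H\'ajek-R\'enyi-type inequality that appears among the \emph{hypotheses of Theorem \ref{as.H0}}, which is not formally contained in the proposition's stated assumptions (Assumptions \ref{regass} and \ref{ass_cm}); the paper instead compresses the uniform-in-$k$ control into a single display justified by Assumption \ref{regass} and Lemma 2 in \cite{seqcp}, producing the prefactor $\frac{\sqrt{k^*/m}}{1+k^*/m}$, which is $o(1)$ for $\beta\neq 1$ and bounded for $\beta=1$, where $c_m\to\infty$ finishes the argument. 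Your diagnosis of why this is the delicate spot is accurate: a single application of Assumption \ref{regass}(ii) with $k_m=k^*$ only gives $O_P(\sqrt{k^*/m})$, which diverges for $\beta>1$, and since the weight $\frac{\sqrt m}{m+k}$ is largest at small $k$, no termwise estimate yields the paper's balanced factor either -- some quantitative weighted maximal inequality is indispensable, whether imported as the H\'ajek-R\'enyi assumption (your route, arguably more transparent) or via the maximal bound of \cite{seqcp} (the paper's route). So the only correction needed is bookkeeping: either flag that your superlinear step uses a hypothesis beyond the proposition's literal statement, or replace that step by the citation the paper uses.
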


 \subsection{Asymptotic distribution of the delay times}\label{sec_asym}
In this section we consider the 
following delay time
\begin{align}\label{tautildedef}
{\kappa}_m:=\inf\left\{k>k^*:w(m,k)\left|\Gamma(m,k)\right|> c_m\right\},
\end{align}
which in contrast to the stopping time as in \eqref{defstop} explicitely excludes  early rejections due to false alarms. 
By choosing a threshold satisfying Assumption \ref{ass_cm}, early rejections are prevented asymptotically with probability one such that the delay time ${\kappa}_m$ is asymptotically equivalent to the stopping time $\tau_m$. 

Effectively, there are two essential differences when considering late changes as opposed to early changes:
\begin{itemize}
\item As stated in Proposition~\ref{cinf} for late changes the threshold $c_m$ needs to diverge in order to asymptotically guarantee that no false alarm prior to the change has occurred. For early changes fixed thresholds (such as quantiles of the  limit distribution in the no change situation) can and have been used.
\item For early changes, the influence of the historic data set on the stopping time is asymptotically negligible. This is no longer the case for late changes. Heuristically, this can be seen from decomposition~\eqref{GammaH1} where $S_{j,m}$, $j=1,2$, and $S_{j,m}^*$, $j=1,2$, are of the same order, so that the factors effectively determine the dominating terms. For early changes only $o(m)$ observations are necessary to reliably detect a change point (see Lemma~\ref{det.ass.gen} b)), hence the factors in front of $S_{1,m}$ and $S_{1,m}^*$ are of smaller order than the ones in front of $S_{2,m}$ and $S_{2,m}^*$, making these terms asymptotically negligible (for a mathematically rigorous proof of this statement we refer to Lemma 5.6 in \cite{diss}). On the other hand, for late changes, this is no longer true, in fact for linear changes with $\beta=1$ all four terms are of the same order while for late changes the terms involving the historic data set even dominate.
\end{itemize}
A consequence of the second observation is that for late change points the asymptotic stopping time can no longer be expected to be independent of the historic data set or more precisely of $S_{1,m}$ and $S_{1,m}^*$. In order to deal with this, we will first derive results conditional on $S_{1,m}$ and $S_{1,m}^*$ that will then also lead to unconditional results as well. For this reason we assume that the monitoring data set is independent of the historic data set so that $S_{2,m}$ and $S_{2,m}^*$ do not depend on the conditioning variables $S_{1,m}$ and $S_{1,m}^*$. 
\begin{assumption}\label{ass_in}
	Let the monitoring data set $\{X_{m+j}: j\ge 1\}$ be independent of $S_{1,m}$ and $S_{1,m}^*$.
\end{assumption}
This assumption is only required to get (for any index set $I$ and $z_m$)
\begin{align}\label{eq_condprob}
	&P\left(\sup_{k\in I}w(m,k)\left|\widetilde{\Gamma}(m,k)\right|\le z_m\,\Big|\,S_{1,m}=s_{1,m},S^*_{1,m}=s^*_{1,m}\right)\notag\\
	&=
	P\left( \sup_{k\in I}w(m,k)\left|\frac{k^*}{\sqrt{m}}\,s_{1,m}+\frac{k-k^*}{\sqrt{m}}\,s_{1,m}^*\right.\right.\notag\\
	&\phantom{=P(\sup_{k\in I}w(m,k)|\,\,} \left.\left.+\sqrt{k^*}\,S_{2,m}(k)+\sqrt{k-k^*}\,S^*_{2,m}(k)+(k-k^*)\Delta_m \right|\le z_m\right),
\end{align}
which clearly follows from the independence assumption.
As soon as this equality holds at least asymptotically, the independence assumption can be dropped. 
	We conjecture that this is possible if a suitable dependence structure allowing for example for big-block-small-block arguments is being used. However, for clarity of presentation of the results for monitoring $U$-statistics, we will leave this for future work.

\begin{assumption}\label{ass.stop}
	\begin{enumerate}
		\item[(a)]
Let $\{Y_i\}_{i\in\mathbb{Z}}$ and $\{Z_{i,m}\}_{i\in\mathbb{Z}}$ be stationary time series that fulfill the following assumptions for a given kernel function $h$:
\begin{itemize}
	\item[(i)]$\E\left(\left|\sum_{i=1}^m\sum_{j=k_1}^{k_2}r_m^*(Y_i,Z_{j,m})\right|^2\right)\leq u(m)(k_2-k_1+1)\quad\mbox{for all } m+1\leq k_1\leq k_2$\\
with  $\frac{u(m)}{m^2}\log(m)^2\rightarrow 0$.
\item[(ii)]
For all $0\leq\alpha<\frac{1}{2}$ the following Hajek-Renyi-type inequality holds
\begin{equation*}
\sup_{1\leq l\leq l_m}\frac{1}{m^{\frac{1}{2}-\alpha}l^{\alpha}}\left|\sum_{j=m+k^*+1}^{m+k^*+l}h^*_{2,m}(Z_{j,m})\right|=O_P\left(1\right)\quad\mbox{as }l_m\rightarrow\infty.
\end{equation*}
\end{itemize}
\item[(b)] For very early changes with 
	$\lim_{m\to\infty} m^{\beta -1}\,\frac{\sqrt{m}|\Delta_m|}{c_m}<\infty$ (which is a certain subclass of early sublinear changes),
	 we require additionally to (a) 
	the following functional central limit theorem 
	\begin{align*}
\left\{\frac{1}{\sqrt{k_m}}\sum_{j=1}^{[k_mt]}(h_2(Y_{j}),h^*_{2,m}(Z_{j,m})):0< t\leq 1\right\}\stackrel{D}{\rightarrow}\left\{\left(W(t),W^*(t)\right):0< t\leq 1\right\},
\end{align*}
for $k_m\rightarrow\infty$,
where $\left\{\left(W(t),W^*(t)\right):0< t\leq 1\right\}$ is a non-degenerate centered bivariate Wiener process with $\sigma^2=\Var(W(1))$ and $\sigma^{*2}=\Var(W^*(1))$.
\item[(c)] For later changes with $\lim_{m\to\infty} m^{\beta -1}\,\frac{\sqrt{m}|\Delta_m|}{c_m}\to \infty$, 
 we require additionally to (a)
\begin{itemize}
\item[(i)]
	$\frac{1}{\sqrt{k_m}}\sum_{j=1}^{k_m}h_2(Y_{j})\stackrel{\mathcal{D}}{\rightarrow}W(1)$ as $k_m\rightarrow\infty$, where $\{W(\cdot)\}$ is as in (b).
\item[(ii)] $\sup_{1\leq l\leq k_m}\frac{1}{\sqrt{k_m}}\sum_{j=m+k^*+1}^{m+k^*+l}
h_{2,m}^*(Z_{j,m})=O_P(1)$ as $k_m\rightarrow\infty$.
\end{itemize}
\end{enumerate}
\end{assumption}

%

The variance of the limit distribution of the delay time for early changes depends on the interplay between the magnitude of the change in combination how early the change occurs. More precisely with $k^*=\lfloor \lambda m^{\beta}\rfloor$ and the notation of Assumption~\ref{ass.stop}, consider
\begin{align}\label{def:sigmalim}
	\sigma_{\infty}^2=\begin{cases}
		\sigma^{*2}, &\phantom{\text{if }m^{\beta -1}\,\frac{\sqrt{m}|\Delta_m|}{c_m}
	\to \;} 0,\\
	\frac{\lambda D}{1+\lambda D}\,\sigma^2+\frac{1}{1+\lambda D}\,\sigma^{*2}
	,&\text{if }	m^{\beta -1}\,\frac{\sqrt{m}|\Delta_m|}{c_m}\to D\in (0,\infty),\\
		\sigma^2,&\phantom{\text{if }m^{\beta -1}\,\frac{\sqrt{m}|\Delta_m|}{c_m}\to \;}\infty.
\end{cases}
\end{align}
By Assumption~\ref{change.ass} (ii) for late changes always the last case that applies.
The assumptions on the threshold are clearly fulfilled for any constant threshold for early changes as well as for any sequence of thresholds converging to infinity in case of late changes.

The following assumptions imply  Assumption~\ref{ass_cm}.
\begin{assumption}\label{ass_neu_stoch}
	Let $S_{1,m}=O_P(1)$, $S_{1,m}^*=O_P(1)$, $\liminf_{m\to\infty}c_m>0$ as well as $$c_m\max(1,m^{1-\beta})\to\infty.$$
\end{assumption}

The below theorem shows that properly standardized delay times will be asymptotically normal with the above variance $\sigma_{\infty}^2$.  This shows that for sublinear changes with a combination of a small $\beta$ (particularly early) and/or small magnitude $\Delta_m$ the asymptotic variance of the delay time is determined by distributional properties of the observations after the change occurred. On the other hand for late (linear and superlinear) changes or even sublinear changes but with a larger magnitude of the change, the asymptotic variance is determined by distributional properties of the observations before the change occurs. The second case is the transition between those two situations.

\begin{theorem}\label{thm:cond}
	Let Assumptions ~\ref{change.ass} -- \ref{ass_cm} be satisfied. 
		Consider the standardizing sequences 
\begin{align}\label{defam}
&a_m(S_{1,m},S^*_{1,m})\\
&=\left(1-\frac{c_m}{\sqrt{m}|\Delta_m|}+\frac{S_{1,m}^*\sign(\Delta_m)}{\sqrt{m}|\Delta_m|}\right)^{-1}\left(k^*+k^*\frac{(S_{1,m}^*-S_{1,m})\sign(\Delta_m)}{\sqrt{m}|\Delta_m|}+\frac{c_m\sqrt{m}}{|\Delta_m|}\right),\notag
\end{align}
where $\sign(\Delta_m)$ is the sign of $\Delta_m$,
and
\begin{align}\label{defbm}
b_m(S_{1,m},S^*_{1,m})=\frac{ \sqrt{a_m(S_{1,m},S^*_{1,m})}}{|\Delta_m|}.
\end{align}

		Then, it holds
		$$P\left(\left.\frac{\kappa_m-a_m(S_{1,m},S^*_{1,m})}{b_m(S_{1,m},S^*_{1,m})}\leq x\right|S_{1,m},S^*_{1,m}\right)\overset{P}{\rightarrow}\Phi\left(\frac{x}{\sigma_{\infty}}\right),$$
		where $\Phi$ is the distribution function of the standard Gaussian distribution and $\sigma_{\infty}$ is as in \eqref{def:sigmalim}.
\end{theorem}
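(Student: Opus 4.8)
The plan is to recast $\kappa_m$ as a first-passage time and then, conditionally on $S_{1,m},S^*_{1,m}$, to reduce the full passage event to a single-time crossing event governed by a central limit theorem. Throughout I condition on $S_{1,m}=s_{1,m}$, $S^*_{1,m}=s^*_{1,m}$; by Assumption~\ref{ass_in} the monitoring-part variables $S_{2,m}$ and $S^*_{2,m}(\cdot)$ are then independent of these values, so that $a_m$ and $b_m$ may be treated as deterministic while the remaining randomness comes solely from the post-$k^*$ observations. Since the drift $(k-k^*)\Delta_m$ eventually fixes the sign of $\Gamma(m,k)$, I first resolve the absolute value, writing the crossing condition as $\sign(\Delta_m)\Gamma(m,k)>c_m\sqrt m(1+k/m)$, and introduce the centred, affine functional
\[
\bar D(k)=\sign(\Delta_m)\Big[\tfrac{k^*}{\sqrt m}s_{1,m}+\tfrac{k-k^*}{\sqrt m}s^*_{1,m}+(k-k^*)\Delta_m\Big]-c_m\sqrt m\,(1+k/m).
\]
A direct computation (the one underlying \eqref{defam}) shows that $a_m$ is the unique root $\bar D(a_m)=0$, while $\bar D'\equiv|\Delta_m|+\sign(\Delta_m)s^*_{1,m}/\sqrt m-c_m/\sqrt m=|\Delta_m|(1+o(1))$, because Assumption~\ref{change.ass}(ii) forces $c_m/(\sqrt m|\Delta_m|)\to0$ and $s^*_{1,m}/(\sqrt m|\Delta_m|)\to0$. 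Hence at $k_x:=a_m+x\,b_m$ one obtains $\bar D(k_x)=\bar D'\,x\,b_m=x\sqrt{a_m}\,(1+o(1))$ upon using $b_m=\sqrt{a_m}/|\Delta_m|$.

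Second, using Hoeffding's decomposition \eqref{GammaH1} I split $\sign(\Delta_m)\Gamma(m,k_x)-c_m\sqrt m(1+k_x/m)$ into the deterministic $\bar D(k_x)$, the fluctuation $F(k):=\sqrt{k^*}S_{2,m}+\sqrt{k-k^*}\,S^*_{2,m}(k)$, and the remainder $R_m(k_x)$. The remainder is negligible at the scale $\sqrt{a_m}$: Assumptions~\ref{regass}(i) and \ref{ass.stop}(a)(i) bound $\E(R_m(k_x)^2)$ by $u(m)k_x/m^2=o(a_m)$, the $\log^2 m$ slack being reserved for the maximal version needed below. The heart of the matter is the conditional central limit theorem $F(a_m)/\sqrt{a_m}\to N(0,\sigma_{\infty}^2)$. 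Here $\sqrt{k^*}S_{2,m}=\sum_{j=m+1}^{m+k^*}h_2(Y_j)$ and $\sqrt{k_x-k^*}S^*_{2,m}(k_x)=\sum_{j=m+k^*+1}^{m+k_x}h^*_{2,m}(Z_{j,m})$ run over disjoint index blocks of lengths $k^*$ and $k_x-k^*$, with variances $\approx k^*\sigma^2$ and $\approx(a_m-k^*)\sigma^{*2}$; crucially, because these blocks correspond to non-overlapping increments, the cross-covariance of the limiting bivariate Wiener process vanishes and the limit variance is the additive average $\tfrac{k^*}{a_m}\sigma^2+\tfrac{a_m-k^*}{a_m}\sigma^{*2}$. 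The ratio $k^*/(a_m-k^*)\to\lambda D$ then identifies the three regimes of \eqref{def:sigmalim}, the joint convergence being supplied by the functional CLT of Assumption~\ref{ass.stop}(b) in the transition case and, when $D=\infty$, by the scalar CLT of Assumption~\ref{ass.stop}(c)(i) for the dominant block $\sqrt{k^*}S_{2,m}$ together with the sup-bound (c)(ii) that makes the $k$-dependent block of order $o_P(\sqrt{k^*})$.

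Third, and this is the step I expect to be the main obstacle, I must justify replacing the genuine first-passage event $\{\kappa_m\le k_x\}$ by the static crossing event $A_x=\{\bar D(k_x)+\sign(\Delta_m)(F(k_x)+R_m(k_x))>0\}$. The inclusion $A_x\subseteq\{\kappa_m\le k_x\}$ is immediate. For the converse I localize: I show that the passage cannot occur more than $M\,b_m$ before $a_m$, since there the gap $|\bar D(k)|\asymp|\Delta_m|\,|k-a_m|$ exceeds the fluctuation scale $\sqrt{a_m}\,\sigma_{\infty}$, and the H\'ajek-R\'enyi inequality of Assumption~\ref{ass.stop}(a)(ii) controls $\sup_k|F(k)|$ against this gap; and that once the boundary is crossed the process does not dip back below it by time $k_x$, again because the drift $|\Delta_m|$ dominates the increments of $F$ over a window of width $O(b_m)$. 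A key quantitative point is $b_m/a_m=1/(|\Delta_m|\sqrt{a_m})\to0$ (which follows from $a_m-k^*\ge c_m\sqrt m/|\Delta_m|$ together with $\sqrt m|\Delta_m|\to\infty$), so the increment of the $k$-dependent fluctuation $\sqrt{k-k^*}S^*_{2,m}(k)$ across the localization window is $O_P(\sqrt{b_m})=o_P(\sqrt{a_m})$ and may be frozen at $k=a_m$.

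Collecting the three steps gives, conditionally on $S_{1,m},S^*_{1,m}$,
\[
P(\kappa_m\le k_x\mid S_{1,m},S^*_{1,m})=P(A_x\mid\cdot)+o_P(1)=P\big(x\sqrt{a_m}+\sqrt{a_m}\,\sigma_{\infty}Z_m>0\big)+o_P(1),
\]
with $Z_m:=\sign(\Delta_m)F(a_m)/(\sqrt{a_m}\sigma_{\infty})\to N(0,1)$, which by symmetry of the Gaussian equals $\Phi(x/\sigma_{\infty})+o_P(1)$. Since the limit is deterministic while $a_m,b_m$ enter only through the standardisation, this is exactly the asserted convergence in probability, and the main difficulty throughout is the uniform drift-versus-fluctuation control of the third step that pins the first passage to an $O(b_m)$ window around $a_m$.
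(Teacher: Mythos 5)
Your route is essentially the paper's own: the duality between $\{\kappa_m\le k_x\}$ and a boundary crossing of the decomposed statistic \eqref{GammaH1}, the identification of $a_m$ as the root of the drift-minus-threshold equation (this is exactly the cancellation that produces \eqref{defam2}) with $b_m=\sqrt{a_m}/|\Delta_m|$, the additive limit variance $\frac{k^*}{a_m}\sigma^2+\frac{a_m-k^*}{a_m}\sigma^{*2}$ coming from the two disjoint monitoring blocks, and a localization of the first passage near $a_m$. Your localization differs only cosmetically from the paper's: you pin the passage to an $O(Mb_m)$ window and use that the fluctuation increments there are $o_P(\sqrt{a_m})$ while the drift increment is $\asymp M\sqrt{a_m}$, whereas Lemma~\ref{delta.stop2} and Proposition~\ref{delta.lim2} work with a window of relative width $\delta$ and the modulus of continuity of $W^*$ as $\delta\to 0$; both versions are sound, and your phrasing (drift dominates the \emph{increments} of $F$, not $F$ itself) is the correct one.

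The genuine gap is your treatment of the remainder $R_m$. You condition on $S_{1,m}=s_{1,m}$, $S^*_{1,m}=s^*_{1,m}$ throughout and then dispose of $R_m(k_x)$ via the unconditional moment bounds of Assumptions~\ref{regass}(i) and \ref{ass.stop}(a)(i). But $R_m$ is a double sum over the historic observations $Y_1,\dots,Y_m$; unlike the monitoring part $F(k)$ it is \emph{not} covered by the independence in Assumption~\ref{ass_in}, so its conditional moments given particular values $s_{1,m},s^*_{1,m}$ are not controlled by the unconditional bound $u(m)(k_2-k_1+1)$ — this is precisely the pitfall the paper flags before Theorem~\ref{thm:cond:version}, and the reason that theorem is stated for $\widetilde\kappa_m$ (remainder excluded) only. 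The paper repairs this at the level of conditioning on the random variables: since $\E\left[P\left(\sup_{k^*<k\le N}\frac{m+N}{\sqrt{N}}\frac{|R_m(k)|}{m+k}>\epsilon\,\middle|\,S_{1,m},S_{1,m}^*\right)\right]=P\left(\sup_{k^*<k\le N}\frac{m+N}{\sqrt{N}}\frac{|R_m(k)|}{m+k}>\epsilon\right)\to 0$ (the maximal inequality of \cite{Momineq} supplying the $\log^2 m$ factor), the conditional probability is $o_P(1)$ by Markov, and then the $P$-stochastic validity of \eqref{eq_s_m} together with the subsequence principle converts the fixed-sequence result into the asserted convergence in probability — a step your closing sentence only gestures at. Relatedly, you never impose the first condition of \eqref{eq_s_m}, $\limsup_m \frac{|s_{1,m}|}{c_m}\frac{\lambda}{m^{1-\beta}+\lambda}<1$: without it, \eqref{eq_lem_gneu} shows $a_m-k^*$ need not even be positive, and your gap estimate near $k=k^*$ (where the fluctuation is $O_P(\sqrt{k^*})$ and the gap is of order $c_m(m+a_m)/\sqrt{m}$ only thanks to this condition) breaks down; likewise $s^*_{1,m}/(\sqrt{m}|\Delta_m|)\to 0$ needs $S^*_{1,m}=O_P(1)$ from Assumption~\ref{ass_neu_stoch}, not Assumption~\ref{change.ass}(ii) alone. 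All of this is repairable under the theorem's hypotheses, so your architecture stands, but the remainder step as written would fail.
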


In particular the theorem shows that the sequence $a_m$ represents the asymptotic expected delay while $b_m$ represents the asymptotic variability.

\begin{assumption}\label{ass_as_S}
Let
\begin{align}
&	S_{1,m}=o\left( \sqrt{m}|\Delta_m| \right)\quad a.s., \qquad S^*_{1,m}=o\left( \sqrt{m}|\Delta_m| \right)\quad a.s. \notag\\
&	\limsup_{m\rightarrow\infty}\frac{|S_{1,m}|}{c_m}\frac{\lambda}{m^{1-\beta}+\lambda}<1 \quad\mbox{a.s. }\label{eq_S_lil}.
\end{align}
\end{assumption}
If a law of iterated logarithm for $S_{1,m}$ holds we can quantify $c_m$ for the latter condition (that also implies \eqref{cms}) precisely. 
\begin{remark}\label{rem_cm}
	Often $\{h_1(Y_j)\}$ fulfills a law of iterated logarithm, i.e.\
\begin{align}\label{lil}
&\limsup_{m\rightarrow\infty}\frac{\left|S_{1,m}\right|}{\sqrt{2\varrho^2\log\log m}}=1\quad\mbox{a.s.}
\end{align}
for some $\varrho^2>0$. 
In case of the DOM kernel this comes down to the usual law of iterated logarithm for $\{Y_j\}$, while for the Wilcoxon kernel it comes down to a law of iterated logarithm for $\{F(Y_j)\}$. In this case \eqref{eq_S_lil} is fulfilled for \textbf{early} changes (i.e.\ with $\beta<1$) if a constant threshold (for example obtained as a quantile from the limit distribution in Theorem~\ref{as.H0}) is applied. 

On the other hand, for \textbf{late} changes ($\beta\ge 1$) assertion \eqref{eq_S_lil} is not satisfied for a constant threshold but is fulfilled as soon as 
	\begin{align}\label{eq_neu_rem_2}\liminf_{m\to\infty}\frac{c_m^2}{\log\log m}>2\varrho^2,\end{align}
i.e.\ in particular, if $c_m$ grows faster to infinity than $\sqrt{\log\log m}$. In case of linear changes with $\beta=0$, this condition can be weakened further, where the left hand side only needs to be larger than $2\rho^2 \lambda^2/(1+\lambda)^2$.

Similarly, by the law of iterated logarithm the other assertions can also be quantified.
 \end{remark}

\begin{corollary}\label{cor:cond}
	Let Assumptions~\ref{change.ass} -- \ref{ass.stop} as well as \ref{ass_as_S} hold.
	Then, we get with the notation of Theorem~\ref{thm:cond}
	$$P\left(\left.\frac{\widetilde{\kappa}_m-a_m(S_{1,m},S^*_{1,m})}{b_m(S_{1,m},S^*_{1,m})}\leq x\right|S_{1,m},S^*_{1,m}\right){\rightarrow}\, \Phi\left(\frac{x}{\sigma_{\infty}}\right)\qquad a.s.,$$
	$\widetilde{\kappa}_m$ is defined as in  \eqref{tautildedef} but replacing $\Gamma(m,k)$ with $\widetilde{\Gamma}(m,k)$ as in \eqref{GammaH1}.
\end{corollary}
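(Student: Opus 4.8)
The plan is to reduce the conditional probability to a deterministic statement about the monitoring sample and then to feed in the almost-sure control of the historic quantities provided by Assumption~\ref{ass_as_S}. By the independence Assumption~\ref{ass_in}, equation~\eqref{eq_condprob} describes the conditional law of $\widetilde{\Gamma}(m,k)$ from \eqref{GammaH1}: conditionally on $S_{1,m}=s_{1,m}$ and $S_{1,m}^*=s_{1,m}^*$ the process reduces to the deterministic drift $\tfrac{k^*}{\sqrt m}s_{1,m}+\tfrac{k-k^*}{\sqrt m}s_{1,m}^*+(k-k^*)\Delta_m$ plus the independent monitoring fluctuation $\sqrt{k^*}\,S_{2,m}+\sqrt{k-k^*}\,S^*_{2,m}(k)$. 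Consequently the conditional distribution function
$$g_m(s_{1,m},s_{1,m}^*,x):=P\left(\frac{\widetilde{\kappa}_m-a_m(s_{1,m},s_{1,m}^*)}{b_m(s_{1,m},s_{1,m}^*)}\le x\,\Big|\,S_{1,m}=s_{1,m},S_{1,m}^*=s_{1,m}^*\right)$$
is a \emph{deterministic} function of the plugged-in values, the only remaining randomness being that of $(S_{2,m},S^*_{2,m})$.

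The core step, which I expect to be the main obstacle, is a deterministic limit theorem: for \emph{any} non-random sequences $(s_{1,m})$, $(s_{1,m}^*)$ satisfying
$$s_{1,m}=o(\sqrt m\,|\Delta_m|),\quad s_{1,m}^*=o(\sqrt m\,|\Delta_m|),\quad \limsup_{m\to\infty}\frac{|s_{1,m}|}{c_m}\frac{\lambda}{m^{1-\beta}+\lambda}<1,$$
one has $g_m(s_{1,m},s_{1,m}^*,x)\to\Phi(x/\sigma_\infty)$. This is precisely the conditional content underlying Theorem~\ref{thm:cond}, and I would prove it by studying the first crossing of the critical curve $c_m/w(m,k)$ by $\widetilde{\Gamma}$. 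Around $k\approx a_m(s_{1,m},s_{1,m}^*)$ the drift $(k-k^*)\Delta_m$ together with the historic offsets dominates, and the standardisation \eqref{defam}--\eqref{defbm} is calibrated so that the centred and scaled crossing time is governed by the fluctuation $\sqrt{k^*}\,S_{2,m}+\sqrt{k-k^*}\,S^*_{2,m}(k)$; the functional central limit theorem and the Hajek-Renyi inequalities of Assumption~\ref{ass.stop} then turn this into the Gaussian limit, with the three regimes of \eqref{def:sigmalim} deciding whether the pre-change fluctuation $S_{2,m}$, the post-change fluctuation $S^*_{2,m}$, or a mixture determines $\sigma_\infty^2$. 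The delicate parts are showing that early crossings are asymptotically excluded (using \eqref{eq_S_lil}, which implies \eqref{cms}) and correctly identifying the dominating fluctuation in each regime; by contrast, because $\widetilde{\kappa}_m$ is built from $\widetilde{\Gamma}$ rather than $\Gamma$, no remainder term $R_m(k)$ has to be controlled, which is exactly why an almost-sure rather than an in-probability statement is attainable here.

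It then remains to upgrade the deterministic core to the almost-sure conditional convergence. Assumption~\ref{ass_as_S} states exactly that, almost surely, $S_{1,m}=o(\sqrt m|\Delta_m|)$, $S_{1,m}^*=o(\sqrt m|\Delta_m|)$ and \eqref{eq_S_lil} hold; that is, for almost every realisation $\omega$ the sequences $s_{1,m}=S_{1,m}(\omega)$, $s_{1,m}^*=S_{1,m}^*(\omega)$ belong to the admissible class of the core step. Applying the deterministic limit along each such $\omega$ yields $g_m(S_{1,m},S_{1,m}^*,x)\to\Phi(x/\sigma_\infty)$ almost surely. Since the probability-one event on which the historic sequences are admissible does not depend on $x$ and the limit $\Phi(x/\sigma_\infty)$ is deterministic, the convergence in fact holds for every $x$ on a single event of full measure, which is the assertion of the corollary.
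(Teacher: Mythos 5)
Your proposal is correct and takes essentially the same route as the paper: the paper's proof of Corollary~\ref{cor:cond} consists precisely of applying the deterministic-sequence result, Theorem~\ref{thm:cond:version} (whose proof follows your sketched core step via the duality \eqref{limtaun}, the normalizations \eqref{eq:em}--\eqref{eq:vm}, Lemma~\ref{delta.stop2} and Proposition~\ref{delta.lim2}), pathwise along $s_{1,m}=S_{1,m}(\omega)$, $s^*_{1,m}=S^*_{1,m}(\omega)$, which lie in the admissible class \eqref{eq_s_m} almost surely by Assumption~\ref{ass_as_S}. Your remark that working with $\widetilde{\kappa}_m$ (so that no remainder term $R_m(k)$ needs conditional control) is what makes the almost-sure statement attainable also matches the paper's discussion preceding Theorem~\ref{thm:cond:version}.
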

This is of particular interest for the DOM kernel, 
where the remainder terms are equal to zero such that $\kappa$ and $\widetilde{\kappa}$ coincide.

Theorem and corollary follow relatively easily from the below result  for the conditional probability given $S_{1,m}=s_{1,m}$ and $S^*_{1,m}=s^*_{1,m}$. This may also be of independent interest as the historic data is known when the monitoring starts (but $h_1$ and $h_{1,m}^*$ will typically depend on unknown distributional properties of the observations).  

As in Corollary~\ref{cor:cond} we only get the result for $\widetilde{\kappa}_m$. 
 Because the remainder term depends on the historic data set as well, its neglibility conditionally on $S_{1,m}=s_{1,m}$ and  $S^*_{1,m}=s^*_{1,m}$ cannot be established based on Assumptions \ref{regass} (i) and \ref{ass.stop} (a) (i) alone. On the other hand to prove the neglibility conditionally on $S_{1,m}$ and $S_{1,m}^*$, these assumptions are sufficient (see the proof of Theorem~\ref{thm:cond}).

 To obtain the below result, we need the sequences $s_{1,m}$ and $s^*_{1,m}$ to fulfill \eqref{eq_s_m}. By Assumption~\ref{ass_as_S} the corresponding sequences of random variables $S_{1,m}$ and $S_{1,m}^*$  fulfill \eqref{eq_s_m} almost surely. 
 Despite the fact that conditional probabilities are only defined almost surely, this does not mean that we can drop \eqref{eq_s_m} and still obtain the same limit distribution. This is because the  'almost surely' for the conditional probability refers to a one-set for each fixed $m$, whereas 'almost surely' as in Assumption~\ref{ass_as_S}  refers to a one-set with respect to a limit result (which is not uniform in $\omega$).
\begin{theorem}\label{thm:cond:version}
	Let  $s_{1,m}, s_{1,m}^*\in\mathbb{R}$ be sequences fulfilling 
	\begin{align}\label{eq_s_m}
		\limsup_{m\to\infty}\frac{|s_{1,m}|}{c_m}\,\frac{\lambda}{m^{1-\beta}+\lambda}<1,\qquad \frac{|s_{1,m}|}{\sqrt{m}|\Delta_m|}\to 0,\qquad \frac{|s_{1,m}^*|}{\sqrt{m}|\Delta_m|}=O(1).
	\end{align}
	Under the assumptions of Theorem \ref{thm:cond} it holds
	$$\lim_{m\rightarrow\infty}P\left(\left.\frac{\widetilde{\kappa}_m-a_m(s_{1,m},s^*_{1,m})}{b_m(s_{1,m},s^*_{1,m})}\leq x\right|S_{1,m}=s_{1,m},S^*_{1,m}=s_{1,m}^*\right)=\Phi\left(\frac{x}{\sigma_{\infty}}\right)\quad\mbox{for all }x\in\mathbb{R},$$
	where $\widetilde{\kappa}_m$ is defined as in  \eqref{tautildedef} but replacing $\Gamma(m,k)$ with $\widetilde{\Gamma}(m,k)$ as in \eqref{GammaH1}.
\end{theorem}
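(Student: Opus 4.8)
The plan is to reduce the conditional first-passage probability to the probability of a single level crossing, to establish a conditional central limit theorem for the monitoring part of the statistic at that crossing time, and finally to identify the limiting variance by a purely deterministic computation that produces the three cases of \eqref{def:sigmalim}. Throughout I condition on $S_{1,m}=s_{1,m}$, $S_{1,m}^*=s_{1,m}^*$ (abbreviated by $\cdots$ below) and invoke Assumption~\ref{ass_in} via \eqref{eq_condprob} to replace, in the decomposition \eqref{GammaH1}, the historic terms by the fixed numbers $s_{1,m},s_{1,m}^*$ while the monitoring terms $S_{2,m},S^*_{2,m}(k)$ keep their unconditional law. Without loss of generality take $\Delta_m>0$ (the case $\Delta_m<0$ is symmetric). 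For $k>k^*$ I split the centred weighted statistic into a deterministic drift and a stochastic part,
\[\psi_m(k):=(k-k^*)\Delta_m+\tfrac{k^*}{\sqrt m}s_{1,m}+\tfrac{k-k^*}{\sqrt m}s_{1,m}^*-c_m\tfrac{m+k}{\sqrt m},\qquad U_m(k):=\sqrt{k^*}\,S_{2,m}+\sqrt{k-k^*}\,S^*_{2,m}(k),\]
so that conditionally $\widetilde{\Gamma}(m,k)-c_m/w(m,k)\stackrel{D}{=}U_m(k)+\psi_m(k)$. The map $k\mapsto\psi_m(k)$ is affine with fixed slope $\psi_m'\equiv\Delta_m A_m$, where $A_m:=1-\tfrac{c_m}{\sqrt m\Delta_m}+\tfrac{s_{1,m}^*}{\sqrt m\Delta_m}$; the first condition in \eqref{eq_s_m} ensures $\psi_m(k^*)<0$, so $a_m=a_m(s_{1,m},s_{1,m}^*)$ is exactly the unique zero of $\psi_m$, satisfies $a_m>k^*$, and $b_m=\sqrt{a_m}/\Delta_m$. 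Fix $x$ and set $k_m:=\lfloor a_m+xb_m\rfloor$; the claim is $P(\widetilde{\kappa}_m\le k_m\mid\cdots)\to\Phi(x/\sigma_\infty)$.

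The first and hardest step is to show that the first-passage event collapses to a single level-crossing event,
\[P\big(\widetilde{\kappa}_m\le k_m\mid\cdots\big)=P\big(U_m(k_m)+\psi_m(k_m)>0\mid\cdots\big)+o(1).\]
The inclusion ``$\supseteq$'' is immediate, so only the reverse needs work. The key is that the drift increases with slope $\Delta_m A_m$, while the critical curve contributes only the slope $c_m/\sqrt m=o(\Delta_m)$ by Assumption~\ref{change.ass}(ii); hence $\psi_m$ moves away from the boundary linearly, the change over a window of length $Mb_m$ around $a_m$ being of order $M\sqrt{a_m}$. Against this, the oscillation of $U_m$ over such a window is, by the H\'ajek--R\'enyi inequality of Assumption~\ref{ass.stop}(a)(ii) applied to $\{h^*_{2,m}(Z_{j,m})\}$ and its analogue for $\{h_2(Y_j)\}$, of order $\sqrt{Mb_m}=o(M\sqrt{a_m})$ since $a_m\to\infty$. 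Letting $M\to\infty$ confines the first passage to an $o(b_m)$ neighbourhood of the deterministic crossing point; the lower boundary $-c_m/w(m,k)$ is not reached before, exactly as in Proposition~\ref{cinf}. I expect this reduction to be the crux, because it must be uniform in $k$ over the post-change block and simultaneously across the three regimes of \eqref{def:sigmalim}, in which the relative sizes of $k^*$ and the detection scale differ.

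For the single-time probability, expand the affine drift about its zero: $\psi_m(k_m)=\Delta_m A_m(k_m-a_m)=x\sqrt{a_m}A_m+o(\sqrt{a_m})$, using $\Delta_m b_m=\sqrt{a_m}$ and $\Delta_m=O(1)=o(\sqrt{a_m})$, so the event becomes $U_m(k_m)>-x\sqrt{a_m}A_m(1+o(1))$. Now
\[U_m(k_m)=\sum_{j=m+1}^{m+k^*}h_2(Y_j)+\sum_{j=m+k^*+1}^{m+k_m}h^*_{2,m}(Z_{j,m})\]
is a sum over the two \emph{disjoint} blocks $\{m+1,\dots,m+k^*\}$ and $\{m+k^*+1,\dots,m+k_m\}$. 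By the bivariate functional central limit theorem of Assumption~\ref{ass.stop}(b) these two partial sums converge jointly to increments of a bivariate Wiener process over disjoint time intervals, whose cross-covariance vanishes; consequently, with $v_m^2:=k^*\sigma^2+(k_m-k^*)\sigma^{*2}$, one gets $U_m(k_m)/v_m\dto N(0,1)$. When the change is not ``very early'' one of the two blocks dominates, and the marginal convergence of Assumption~\ref{ass.stop}(c)(i) together with the maximal bound (c)(ii) replaces the joint functional limit.

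Combining the two steps yields
\[P\big(\widetilde{\kappa}_m\le k_m\mid\cdots\big)\longrightarrow 1-\Phi(-x\ell)=\Phi(x\ell),\qquad \ell:=\lim_{m\to\infty}\frac{\sqrt{a_m}\,A_m}{v_m},\]
so it remains to identify this deterministic limit. Expressing $a_m$, $A_m$ and $v_m^2$ through $k^*$ and the detection scale $g_m:=c_m\sqrt m/\Delta_m$, and discarding the lower-order historic corrections permitted by \eqref{eq_s_m} and Assumption~\ref{change.ass}(ii) (the term $s_{1,m}^*/(\sqrt m\Delta_m)$, although only $O(1)$ in \eqref{eq_s_m}, is $o(1)$ for the almost-sure realizations to which the theorem is applied through Assumption~\ref{ass_as_S}), one has $A_m\to1$, and a direct evaluation of $\ell^2=a_mA_m^2/v_m^2$ in each regime gives the value $(\rho+1)/(\rho\sigma^2+\sigma^{*2})$, where $\rho:=\lim_m k^*/g_m$ (read as $1/\sigma^{*2}$ for $\rho=0$ and $1/\sigma^2$ for $\rho=\infty$). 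Since $k^*/g_m=\lambda m^{\beta-1}\tfrac{\sqrt m\Delta_m}{c_m}(1+o(1))$, we have $\rho=\lambda D$, so the three cases $D=0$, $D\in(0,\infty)$, $D=\infty$ reproduce precisely the values $\sigma^{*2}$, $\tfrac{\lambda D}{1+\lambda D}\sigma^2+\tfrac1{1+\lambda D}\sigma^{*2}$, $\sigma^2$ of $\sigma_\infty^2$ in \eqref{def:sigmalim}; by Assumption~\ref{change.ass}(ii) a late change always falls into the last case. Hence $\ell=1/\sigma_\infty$ and $P(\widetilde{\kappa}_m\le k_m\mid\cdots)\to\Phi(x/\sigma_\infty)$, which is the assertion.
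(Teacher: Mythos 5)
You have reproduced the paper's architecture faithfully at the conceptual level: conditioning through \eqref{eq_condprob}, the duality between $\{\widetilde{\kappa}_m\le k_m\}$ and a boundary crossing, the observation that $a_m(s_{1,m},s^*_{1,m})$ is exactly the zero of the affine drift $\psi_m$ (which is the cancellation performed in \eqref{defam2}), and the disjoint-block CLT; your deterministic identification of the limit variance via $\rho=\lambda D$ is correct and matches \eqref{def:sigmalim}, including the observation that increments of the bivariate Wiener process over disjoint intervals are independent. The genuine gap is in what you yourself call the crux: the collapse of the first-passage event to the single event $\{U_m(k_m)+\psi_m(k_m)>0\}$. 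The tool you cite for it, the H\'ajek--R\'enyi bound of Assumption~\ref{ass.stop}(a)(ii), is anchored at $j=m+k^*+1$ and weighted by $m^{1/2-\alpha}l^{\alpha}$; it only yields $\sup_{l\le N-k^*}|\sum_{j=m+k^*+1}^{m+k^*+l}h^*_{2,m}(Z_{j,m})|=O_P(m^{1/2-\alpha}(N-k^*)^{\alpha})$, and in the early-change regime of Assumption~\ref{ass.stop}(b), where $N-k^*$ is of order $a_m=o(m)$, this is of order $(m/a_m)^{1/2-\alpha}\sqrt{a_m}\gg\sqrt{a_m}$ --- useless at the scale $\sqrt{a_m}$ at which the crossing is decided. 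Consequently neither your oscillation bound $O_P(\sqrt{Mb_m})$ over $[a_m-Mb_m,k_m]$ nor the peeling excluding crossings on $(k^*,a_m-Mb_m]$ follows from the stated assumptions: over a window of length $\delta(N-k^*)$ the post-change fluctuations are genuinely of order $\sqrt{\delta}\sqrt{a_m}$, i.e.\ of the same order as the noise driving the limit. This is precisely why the paper proceeds in two stages: Lemma~\ref{delta.stop2} confines the supremum only to the proportional window $[(1-\delta)(N-k^*),N-k^*)$, where the drift deficit is of order $\delta(N-k^*)|\Delta_m|/\sqrt{N}\to\infty$ so that crude $O_P(1)$ bounds suffice, and Proposition~\ref{delta.lim2} then replaces the restricted supremum by the endpoint value up to the term $\sup_{1-\delta<s<1}|W^*(s)-W^*(1)|$, which vanishes only in the subsequent limit $\delta\to 0$ via the FCLT of Assumption~\ref{ass.stop}(b) and the almost-sure continuity of $W^*$. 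Your single-time collapse at scale $b_m$ is a strictly stronger claim; it is plausibly provable through a boundary-crossing estimate for the limiting Wiener process (Skorokhod representation plus Gaussian tails beating the linear barrier), but that argument is absent, and it cannot be replaced by \ref{ass.stop}(a)(ii).

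A second, smaller deviation: you dispose of the term $s^*_{1,m}/(\sqrt{m}\Delta_m)$ by asserting $A_m\to 1$ ``for the realizations to which the theorem is applied''. That proves a weaker statement than Theorem~\ref{thm:cond:version}, whose hypothesis \eqref{eq_s_m} only requires $|s^*_{1,m}|/(\sqrt{m}|\Delta_m|)=O(1)$ and whose normalizers deliberately retain this term; the paper carries it through everywhere --- it sits in the centering $e_m$ of \eqref{eq:em}, in the monotonicity argument locating the supremum at $l=\lfloor N-k^*\rfloor$, and in Lemma~\ref{det.ass.gen}~e). Note also that your own intermediate claims need it kept under control rather than sent to zero: the positivity of the slope $\Delta_mA_m$ of $\psi_m$ (hence $a_m>k^*$ and the monotone drift on which the whole collapse rests) is not guaranteed by \eqref{eq_s_m} if $A_m$ is treated as merely $1+O(1)$. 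Finally, in case (c) the bound of Assumption~\ref{ass.stop}(c)(ii) must be invoked with horizon $N-k^*$ (giving $O_P(\sqrt{N-k^*})=o_P(\sqrt{N})$ by Lemma~\ref{det.ass.gen}~a)), not with horizon $k_m$, where it would yield only the non-negligible order $O_P(\sqrt{a_m})$.
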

The main ideas to obtain the standarizing sequence $a_m$ and $b_m$ as well as of the proof of the above theorem can be found in Section~\ref{sec:derivation}.

The assumptions on the sequences $\{s_{1,m}\}$ and $\{s^*_{1,m}\}$ are fulfilled $P$-stochastically by Assumption~\ref{ass_neu_stoch} resp.\ almost surely by Assumption~\ref{ass_as_S} which is important in the proof of Theorem~\ref{thm:cond} as well as Corollary~\ref{cor:cond}.

\subsection{Consequences for unconditional results}\label{sec:imp}
We obtain the following unconditional results as
 an immediate corollary of  Theorem \ref{thm:cond} by  an application of Lebesgue's dominated convergence theorem
 \begin{corollary}\label{cor_uncond}
	Under the assumptions of Theorem~\ref{thm:cond} it holds with the same notation
	\begin{align*}
\frac{\kappa_m-a_m(S_{1,m},S^*_{1,m})}{b_m(S_{1,m},S^*_{1,m})}\stackrel{D}{\rightarrow}N\left(0,\sigma_{\infty}^2\right).
\end{align*}
\end{corollary}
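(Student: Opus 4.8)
The plan is to obtain the unconditional limit by integrating the conditional statement of Theorem~\ref{thm:cond} against the law of the conditioning variables. Fix $x\in\mathbb{R}$ and set
$$G_m(x):=P\left(\left.\frac{\kappa_m-a_m(S_{1,m},S^*_{1,m})}{b_m(S_{1,m},S^*_{1,m})}\le x\,\right|S_{1,m},S^*_{1,m}\right),$$
which is a $[0,1]$-valued random variable, measurable with respect to $\sigma(S_{1,m},S^*_{1,m})$. By the tower property of conditional expectation the unconditional distribution function of the standardized delay satisfies
$$P\left(\frac{\kappa_m-a_m(S_{1,m},S^*_{1,m})}{b_m(S_{1,m},S^*_{1,m})}\le x\right)=\E\left[G_m(x)\right],$$
so that it suffices to prove $\E[G_m(x)]\to\Phi(x/\sigma_\infty)$ for each fixed $x$.

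Theorem~\ref{thm:cond} yields precisely $G_m(x)\overset{P}{\rightarrow}\Phi(x/\sigma_\infty)$, where the right-hand side is a deterministic constant; note that $\sigma_\infty>0$ in every case of \eqref{def:sigmalim}, since $\sigma^2=\Var(W(1))$ and $\sigma^{*2}=\Var(W^*(1))$ are strictly positive by the non-degeneracy of the limiting Wiener process. To pass from this convergence in probability to convergence of the expectations I would argue along subsequences: any subsequence of $\{G_m(x)\}_m$ admits a further subsequence converging almost surely to $\Phi(x/\sigma_\infty)$, and along that sub-subsequence Lebesgue's dominated convergence theorem applies with the constant dominating function $1$ (using $0\le G_m(x)\le 1$), giving convergence of the corresponding expectations to $\Phi(x/\sigma_\infty)$. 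Since every subsequence of $\{\E[G_m(x)]\}_m$ thus has a further subsequence tending to the same limit, the whole sequence converges and $\E[G_m(x)]\to\Phi(x/\sigma_\infty)$.

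Combining the two steps gives $P\big((\kappa_m-a_m)/b_m\le x\big)\to\Phi(x/\sigma_\infty)$ for every $x\in\mathbb{R}$. Because $x\mapsto\Phi(x/\sigma_\infty)$ is the everywhere continuous (as $\sigma_\infty>0$) distribution function of the $N(0,\sigma_\infty^2)$ law, this pointwise convergence holds at all continuity points of the limit and is therefore exactly convergence in distribution, which establishes the claim. The argument is genuinely immediate, as the corollary asserts; the only point requiring care is the transition from convergence in probability of the \emph{random} conditional distribution functions to convergence of their unconditional expectations, which is handled by the boundedness of $G_m(x)$ together with the subsequence form of dominated convergence. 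It is also worth verifying once that $\sigma_\infty>0$, so that the limiting law is non-degenerate and no continuity-point exception is needed when reading off convergence in distribution.
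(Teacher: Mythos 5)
Your proposal is correct and takes essentially the same route as the paper, which obtains the corollary from Theorem~\ref{thm:cond} "by an application of Lebesgue's dominated convergence theorem" -- i.e.\ exactly your step of writing the unconditional probability as $\E[G_m(x)]$ and passing to the limit using $0\le G_m(x)\le 1$. Your subsequence argument (and the remark that $\sigma_\infty>0$) merely spells out the details the paper leaves implicit.
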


The normalizing sequences in this theorem clearly still depend on the historic data via
 $S_{1,m}$ and $S_{1,m}^*$. 
However, the following theorem shows that the influence on the asymptotic variance is negligible, while for the asymptotic delay time the influence of the historic data is only negligible for early changes.
For late but linear changes the dependence on $S_{1,m}^*$ is negligible, which is still true for superlinear changes if $\beta$ is not too large (i.e.\ the change not too late) in combination with the magnitude of the change $\Delta_m$ being sufficiently large.
While $S_{1,m}$ only depends on distributional properties of the random variables before the change,  $S_{1,m}^*$ also depends on distributional properties after the change.

In accordance with \eqref{defam} define
\begin{align}
&a_m(S_{1,m},0)=\left(1-\frac{c_m}{\sqrt{m}|\Delta_m|}\right)^{-1}\left(k^*-k^*\frac{S_{1,m}\sign(\Delta_m)}{\sqrt{m}|\Delta_m|}+\frac{c_m\sqrt{m}}{|\Delta_m|}\right),\notag\\
&a_m(0,0)=\left(1-\frac{c_m}{\sqrt{m}|\Delta_m|}\right)^{-1}\left(k^*+\frac{c_m\sqrt{m}}{|\Delta_m|}\right),\label{eq_am_00}
\end{align}
and for $b_m$ accordingly. 

 \begin{theorem}\label{thm:aprime}
	 Under the assumptions of Theorem~\ref{thm:cond} it holds with the same notation:
\begin{itemize}
	\item[a)] For any $\beta \ge 0$ it holds
		\begin{align*}
			\frac{\kappa_m-a_m(S_{1,m},S^*_{1,m})}{b_m(0,0)}\stackrel{D}{\rightarrow}N\left(0,\sigma_{\infty}^2\right).
		\end{align*}
\item[b)] Let 
	\begin{align*}
		\frac{c_m}{\sqrt{m}\,|\Delta_m|}\,\max(1,m^{\frac{\beta-1}{2}})\to 0,
	\end{align*}
	which holds for all sublinear and linear changes but also some superlinear (not too late with sufficiently large magnitude) changes.
Then, it holds that
\begin{align*}
	\frac{\kappa_m-a_m(S_{1,m},0)}{b_m(0,0)}\stackrel{D}{\rightarrow}N\left(0,\sigma_{\infty}^2\right)
\end{align*}
\item[c)]For early (sublinear) changes with $\beta<1$ it holds
	\begin{align*}
	\frac{\kappa_m-a_m(0,0)}{b_m(0,0)}\stackrel{D}{\rightarrow}N\left(0,\sigma_{\infty}^2\right)
\end{align*}
\end{itemize}
\end{theorem}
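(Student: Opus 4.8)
The plan is to deduce all three statements from Corollary~\ref{cor_uncond}, which already gives $\frac{\kappa_m-a_m(S_{1,m},S^*_{1,m})}{b_m(S_{1,m},S^*_{1,m})}\stackrel{D}{\rightarrow}N(0,\sigma_\infty^2)$, by successively replacing the random normalizers by their simplified versions and applying Slutsky's lemma. Two standing facts drive every estimate: by Assumption~\ref{change.ass}(ii) together with $\liminf_m c_m>0$ we have $\sqrt m\,|\Delta_m|\to\infty$ and $c_m/(\sqrt m\,|\Delta_m|)\to0$; and by Assumption~\ref{ass_neu_stoch} both $S_{1,m}=O_P(1)$ and $S^*_{1,m}=O_P(1)$, so that $S_{1,m}/(\sqrt m\,|\Delta_m|)$ and $S^*_{1,m}/(\sqrt m\,|\Delta_m|)$ are $o_P(1)$. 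Throughout I write $a_m=P_m^{-1}B_m$, where $P_m$ is the prefactor and $B_m$ the bracket in \eqref{defam}, and I use $k^*\asymp m^\beta$ and $a_m(0,0)\asymp k^*+c_m\sqrt m/|\Delta_m|$, hence $b_m(0,0)\asymp\sqrt{k^*+c_m\sqrt m/|\Delta_m|}/|\Delta_m|$.

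For part a) I would factor
\begin{align*}
\frac{\kappa_m-a_m(S_{1,m},S^*_{1,m})}{b_m(0,0)}=\frac{\kappa_m-a_m(S_{1,m},S^*_{1,m})}{b_m(S_{1,m},S^*_{1,m})}\cdot\frac{b_m(S_{1,m},S^*_{1,m})}{b_m(0,0)}
\end{align*}
and show the second factor tends to $1$ in probability. As $b_m=\sqrt{a_m}/|\Delta_m|$, this amounts to $a_m(S_{1,m},S^*_{1,m})/a_m(0,0)\overset{P}{\rightarrow}1$. Factoring the ratio into prefactor and bracket parts, the prefactor ratio tends to $1$ because the perturbations $c_m/(\sqrt m\,|\Delta_m|)$ and $S^*_{1,m}\,\sign(\Delta_m)/(\sqrt m\,|\Delta_m|)$ both vanish, and the bracket ratio tends to $1$ because the perturbation $k^*(S^*_{1,m}-S_{1,m})\sign(\Delta_m)/(\sqrt m\,|\Delta_m|)$ divided by the dominant bracket term $k^*$ is $O_P\big(1/(\sqrt m\,|\Delta_m|)\big)=o_P(1)$. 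Slutsky's lemma then yields a).

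For part b) I would start from a) and the decomposition
\begin{align*}
\frac{\kappa_m-a_m(S_{1,m},0)}{b_m(0,0)}=\frac{\kappa_m-a_m(S_{1,m},S^*_{1,m})}{b_m(0,0)}+\frac{a_m(S_{1,m},S^*_{1,m})-a_m(S_{1,m},0)}{b_m(0,0)},
\end{align*}
so it remains to prove the last fraction is $o_P(1)$. Putting the difference $a_m(S_{1,m},S^*_{1,m})-a_m(S_{1,m},0)$ over a common denominator (which tends to $1$), the numerator is $\delta_B\,P_m(0)-B_m(S_{1,m},0)\,\delta_P$ with $\delta_B=k^*S^*_{1,m}\sign(\Delta_m)/(\sqrt m\,|\Delta_m|)$ and $\delta_P=S^*_{1,m}\sign(\Delta_m)/(\sqrt m\,|\Delta_m|)$; after the leading $\delta_B$ cancels, three terms of orders $m^{\beta-1}c_m/|\Delta_m|^2$, $m^{\beta-1}/|\Delta_m|^2$ and $c_m/|\Delta_m|^2$ survive. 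Dividing by $b_m(0,0)$ and using $k^*\asymp m^\beta$, I would check that each ratio vanishes precisely under the hypothesis $\frac{c_m}{\sqrt m\,|\Delta_m|}\max(1,m^{(\beta-1)/2})\to0$: for $\beta\ge1$ the binding term is the first, whose ratio equals $c_m m^{\beta/2-1}/|\Delta_m|=\frac{c_m}{\sqrt m\,|\Delta_m|}m^{(\beta-1)/2}$, while for $\beta<1$ the hypothesis holds automatically and the three ratios are controlled by $\sqrt m\,|\Delta_m|\to\infty$ together with $m^{(\beta-1)/2}\to0$.

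For part c) I would invoke b) (whose hypothesis is automatic for $\beta<1$) together with
\begin{align*}
\frac{\kappa_m-a_m(0,0)}{b_m(0,0)}=\frac{\kappa_m-a_m(S_{1,m},0)}{b_m(0,0)}+\frac{a_m(S_{1,m},0)-a_m(0,0)}{b_m(0,0)}.
\end{align*}
Here the centering difference reduces to the single term $-(1-c_m/(\sqrt m\,|\Delta_m|))^{-1}k^*S_{1,m}\sign(\Delta_m)/(\sqrt m\,|\Delta_m|)$, of order $m^{\beta-1/2}/|\Delta_m|$; dividing by $b_m(0,0)$ and bounding $\sqrt{k^*+c_m\sqrt m/|\Delta_m|}\ge\sqrt\lambda\,m^{\beta/2}$ gives a ratio of order $m^{(\beta-1)/2}$, which vanishes exactly for $\beta<1$, and Slutsky finishes the proof. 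I expect the main obstacle to lie in part b): one must expand $a_m$ to the correct order, observe the exact cancellation of the leading $\delta_B$ contribution, and verify that the surviving cross-term between the prefactor perturbation and the $c_m\sqrt m/|\Delta_m|$ part of the bracket is throttled precisely by the stated growth condition, so that the power $m^{(\beta-1)/2}$ rather than a larger one appears.
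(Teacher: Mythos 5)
Your proposal is correct and takes essentially the same route as the paper: part a) via Slutsky after showing $a_m(S_{1,m},S^*_{1,m})/a_m(0,0)\stackrel{P}{\rightarrow}1$ (the paper phrases this as $b^2_m(S_{1,m},S^*_{1,m})/b^2_m(0,0)\stackrel{P}{\rightarrow}1$), and parts b), c) by bounding the centering differences after exactly the cancellation you identify — the paper packages your numerator $\delta_B P_m(0)-B_m(S_{1,m},0)\,\delta_P$ as the identity $\bigl(a_m(S_{1,m},0)-a_m(S_{1,m},S^*_{1,m})\bigr)\bigl(1-\tfrac{c_m}{\sqrt m\,|\Delta_m|}\bigr)=-\bigl(a_m(S_{1,m},S^*_{1,m})-k^*\bigr)\tfrac{S^*_{1,m}\sign(\Delta_m)}{\sqrt m\,|\Delta_m|}$, and the analogue $\bigl(a_m(0,0)-a_m(S_{1,m},0)\bigr)\bigl(1-\tfrac{c_m}{\sqrt m\,|\Delta_m|}\bigr)=\tfrac{k^*}{\sqrt m\,|\Delta_m|}S_{1,m}\sign(\Delta_m)$ for c) — arriving at the same critical rates $\tfrac{c_m}{\sqrt m\,|\Delta_m|}\max\bigl(1,m^{(\beta-1)/2}\bigr)$ for b) and $k^*/\sqrt{a_m(0,0)\,m}\asymp m^{(\beta-1)/2}$ for c). One cosmetic fix: for your third surviving term in b) when $\beta<1$, lower-bound $a_m(0,0)$ by its $c_m\sqrt m/|\Delta_m|$ component rather than by $k^*\asymp m^{\beta}$, which gives the bound $\sqrt{c_m/(\sqrt m\,|\Delta_m|)}\to 0$ directly from Assumption~\ref{change.ass}(ii); bounding via $\sqrt{k^*}$ alone leaves the factor $m^{(1-\beta)/2}$ uncontrolled for small $\beta$.
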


\begin{remark}\label{rem_1_imp}
	For early changes, the factor $\left(1-\frac{c_m}{\sqrt{m}|\Delta_m|}\right)^{-1}$ in $a_m(0,0)$ is negligible (see Section~\ref{proof_sec_imp} for a proof). 
	In the previous literature the asymptotic delay time was thus reported as $k^*+\frac{c_m\sqrt{m}}{|\Delta_m|}$
	in corresponding results. 
	However, even for early changes, where this term is negligible (in contrast to late changes), the term can be seen as a bias correction leading to considerably better approximations in small samples as can be seen in Figure  \ref{figure3_neu} below. This is due to the fact, that this term is always strictly greater than $1$ such that the  asymptotic distribution without the bias correction systematically underestimates the delay time in small samples.\end{remark}

For early changes, the assertion of Theorem \ref{thm:aprime} c) has also been established in Theorem 5.3 in \cite{diss} for different weight functions with $\gamma>0$ under a slightly different set of assumptions. 
\begin{theorem}\label{thm:delay:early}
	Let the change fulfill Assumption~\ref{change.ass} with $c_m$ constant. Additionally let the time series before the change fulfill the assumptions of Theorem 1, where  Assumption~\ref{regass}(i) needs to be strengthend to fulfill $\frac{u(m)}{m^{1+\delta}}\to 0$ for all $\delta>0$ in addition to 
	\begin{align}\label{eq_hr_neu}
\sup_{1\leq k\leq m}\frac{1}{m^{\frac{1}{2}-\alpha}k^{\alpha}}\left|\sum_{j=1}^{k}h_2(Y_j)\right|=O_P\left(1\right)\quad\mbox{ as }m\rightarrow\infty
\end{align}
	 for all $0\leq\alpha<\frac{1}{2}$.
	Additionally, let Assumption~\ref{ass.stop}(a) hold with the same stronger assumption on $u(m)$ as above, Assumption~\ref{ass.stop} (b) and $S_{1,m}^*=O_P(1)$.

Denote by $\kappa_m(\gamma)$ the delay time defined as in \eqref{tautildedef} with $w(m,k)$ replaced by $w_{\gamma}(m,k)$ as in \eqref{wgamma}. Then, it holds for all $\beta<1$ 
	$$\frac{\kappa_m(\gamma)-a_m(\gamma)}{b_m(\gamma)}\stackrel{P}{\rightarrow}N(0,\sigma_{\infty}^2(\gamma)),$$
with\begin{align*}
&\sigma_{\infty}^2(\gamma)=
\begin{cases}
		\sigma^{*2}, &\phantom{\text{if }m^{(\beta -1)(1-\gamma)}\,\frac{\sqrt{m}|\Delta_m|}{c_m}
	\to \;} 0,\\
\xi\,\sigma^{*2}+(1-\xi)\,\sigma^2,&\text{if }	m^{(\beta -1)(1-\gamma)}\,\frac{\sqrt{m}|\Delta_m|}{c_m}\to D\in (0,\infty),\\
		\sigma^2,&\phantom{\text{if }m^{(\beta -1)(1-\gamma)}\,\frac{\sqrt{m}|\Delta_m|}{c_m}\to \;}\infty,
	\end{cases}
\end{align*}
 $\sigma^2$ and ${\sigma^*}^2$ as before
and $\xi=1-\frac{\lambda^{\gamma-1}}{D}\xi^{1-\gamma}$ as well as
\begin{align*}
a_m(\gamma)=\left(\frac{c_mm^{\frac 12-\gamma}}{|\Delta_m|}+\frac{k^*}{a_m^{\gamma}(\gamma)}\right)^{\frac{1}{1-\gamma}},\quad\quad b_m(\gamma)=\sqrt{a_m(\gamma)}|\Delta_m|^{-1}\left(1-\gamma\left(1-\frac{k^*}{a_m(\gamma)}\right)\right)^{-1}.
\end{align*}
\end{theorem}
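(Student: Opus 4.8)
The plan is to follow the same blueprint as the analysis underlying Theorem~\ref{thm:cond}, but now with the weight $w_\gamma$ from \eqref{wgamma} and restricted to the early, sublinear regime. First I would insert the Hoeffding decomposition \eqref{GammaH1} and argue that, uniformly over the relevant window of $k$, only the signal $(k-k^*)\Delta_m$ and the monitoring fluctuation $\sqrt{k^*}\,S_{2,m}+\sqrt{k-k^*}\,S^*_{2,m}(k)$ matter. The remainder $R_m(k)$ is controlled by the strengthened moment bound ($u(m)/m^{1+\delta}\to0$) together with the Hájek--Rényi inequalities in \eqref{eq_hr_neu} and Assumption~\ref{ass.stop}(a); these stronger assumptions (relative to $\gamma=0$) are needed precisely because $w_\gamma$ blows up like $(m/k)^\gamma$ for small $k$, so the weighted supremum over small $k$ must be tamed. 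Because $\beta<1$, the historic terms $\frac{k^*}{\sqrt m}S_{1,m}$ and $\frac{k-k^*}{\sqrt m}S^*_{1,m}$ (with $S_{1,m},S^*_{1,m}=O_P(1)$) are of strictly smaller order than the signal near the crossing, hence asymptotically negligible, and this is exactly where sublinearity enters.

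Next I would identify $a_m(\gamma)$ as the deterministic first-crossing point. Writing the boundary as $c_m/w_\gamma(m,k)$ and, for early $k$, approximating it by $c_m m^{1/2-\gamma}k^\gamma$, setting the dominant deterministic signal equal to the boundary gives $(a-k^*)|\Delta_m| = c_m m^{1/2-\gamma}a^\gamma$, and a short rearrangement shows this is exactly the fixed-point relation defining $a_m(\gamma)$. Linearising the gap $g_m(k):=(k-k^*)|\Delta_m| - c_m/w_\gamma(m,k)$ around $a_m(\gamma)$ yields the slope $g_m'(a_m(\gamma)) = |\Delta_m|\bigl(1-\gamma(1-k^*/a_m(\gamma))\bigr)$, so that $g_m'(a_m(\gamma))\,b_m(\gamma) = \sqrt{a_m(\gamma)}$; this is the origin of the normalising sequence $b_m(\gamma)$.

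I would then reduce the first-passage problem to a pointwise statement. For $k=a_m(\gamma)+x\,b_m(\gamma)$ the event $\{\kappa_m(\gamma)\le k\}$ is, up to negligible terms, equivalent to $\{\sign(\Delta_m)\,\Gamma(m,k) > c_m/w_\gamma(m,k)\}$, i.e.\ to $\{\sign(\Delta_m)[\sqrt{k^*}S_{2,m}+\sqrt{k-k^*}S^*_{2,m}(k)] > -g_m(k)\}$, and by the linearisation $-g_m(k)\approx -x\sqrt{a_m(\gamma)}$. Dividing by $\sqrt{a_m(\gamma)}$, the fluctuation has variance $\frac{k^*}{a_m}\sigma^2 + \frac{a_m-k^*}{a_m}\sigma^{*2}$, which by the functional central limit theorem of Assumption~\ref{ass.stop}(b) converges to a centred normal. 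Here the limit $k^*/a_m(\gamma)\to\xi$ is read off from the fixed-point equation and reproduces the three regimes of $\sigma_\infty^2(\gamma)$ (specialising, for $\gamma=0$, to $\frac{\lambda D}{1+\lambda D}\sigma^2+\frac{1}{1+\lambda D}\sigma^{*2}$ as in \eqref{def:sigmalim}). This delivers $P\bigl(\kappa_m(\gamma)\le a_m(\gamma)+x\,b_m(\gamma)\bigr)\to\Phi(x/\sigma_\infty)$, which is the claim.

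The main obstacle is the reduction in the previous paragraph, namely turning $\{\kappa_m(\gamma)\le k\}=\bigcup_{k^*<j\le k}\{w_\gamma(m,j)|\Gamma(m,j)|>c_m\}$ into the single-time crossing event. The inclusion gives one direction for free; for the other I would show that crossings strictly before $a_m(\gamma)$ are asymptotically impossible (there the weighted signal lies below $c_m$ by a margin exceeding the fluctuation, controlled again by \eqref{eq_hr_neu}), while in the $O(b_m)$-window around $a_m(\gamma)$ the drift dominates the increments so the weighted process is essentially monotone and crosses once. Carrying this out uniformly, simultaneously with the control of the weight singularity at small $k$, is the delicate step; the remainder is bookkeeping around the Taylor expansion of $g_m$.
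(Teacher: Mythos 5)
The paper gives no in-text proof of this theorem (Section~\ref{section_proofs} delegates it to Theorem 5.3 of \cite{diss}), so the benchmark is the paper's own $\gamma=0$ machinery in Sections~\ref{sec:derivation} and \ref{section_proofs} together with the $\gamma>0$ modifications it sketches. Your blueprint is essentially that route: Hoeffding decomposition \eqref{GammaH1}, negligibility of the historic terms for $\beta<1$ because $a_m(\gamma)=o(m)$, the fixed-point identification $(a_m-k^*)|\Delta_m|=c_m m^{1/2-\gamma}a_m^{\gamma}$, the slope identity $g_m'(a_m(\gamma))\,b_m(\gamma)=\sqrt{a_m(\gamma)}$ (which you verify correctly and which explains the shape of $b_m(\gamma)$), and the duality \eqref{limtaun}. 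Two remarks on details. First, the paper and thesis avoid your Taylor linearisation by using the exact $N(m,x)=\bigl(a_m^{1-\gamma}+x\,b_m(1-\gamma)a_m^{-\gamma}\bigr)^{1/(1-\gamma)}$ stated in Section~\ref{sec:derivation}, which makes the cancellation in $(c_m-e_m)/\sqrt{v_m}$ exact; your linear $N=a_m+xb_m$ also works, but then you must additionally check that the quadratic term of the boundary expansion is $o(\sqrt{a_m})$ over the window, which holds since $a_m\Delta_m^2\to\infty$. Second, your variance bookkeeping $\frac{k^*}{a_m}\sigma^2+\frac{a_m-k^*}{a_m}\sigma^{*2}$ is the internally consistent one: the stated fixed-point equation forces $\xi=\lim k^*/a_m(\gamma)$, and your weighting is the one that reduces to \eqref{def:sigmalim} at $\gamma=0$, whereas the printed middle case of the theorem appears to have the roles of $\sigma^2$ and $\sigma^{*2}$ interchanged.

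There is, however, one genuinely false step in your reduction: the claim that ``crossings strictly before $a_m(\gamma)$ are asymptotically impossible.'' Taking $x=0$ in the theorem itself gives $P(\kappa_m(\gamma)\le a_m(\gamma))\to\Phi(0)=\tfrac12$. The reason is that at $k=a_m-y\,b_m$ the boundary exceeds the signal only by an amount of order $y\sqrt{a_m}$, which is exactly the order of the accumulated fluctuation $\sqrt{k^*}S_{2,m}+\sqrt{k-k^*}S^*_{2,m}(k)\asymp\sqrt{N}$; there is no margin ``exceeding the fluctuation'' anywhere within a fixed fraction of $a_m$. Your observation that the drift dominates the fluctuation \emph{increments} on the $O(b_m)$ scale is correct (an increment over a window of length $L$ is $O_P(\sqrt L)$, and $\sqrt{b_m}\,|\Delta_m|\asymp(a_m\Delta_m^2)^{1/4}\to\infty$), but it cannot bridge the zone between $k^*+(1-\delta)(N-k^*)$ and $N-O(b_m)$, where crossings occur with non-vanishing probability. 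The correct repair is the paper's two-scale argument: Lemma~\ref{delta.stop2} eliminates crossings before $k^*+(1-\delta)(N-k^*)$ for each fixed $\delta$ (there the margin $\delta(N-k^*)|\Delta_m|/\sqrt N\to\infty$ really does dominate), and Proposition~\ref{delta.lim2} shows the supremum over the remaining window equals the endpoint value up to the modulus-of-continuity term $\sup_{1-\delta<s<1}|W^*(s)-W^*(1)|$, which is removed by letting $\delta\to0$ at the end. With that substitution --- and keeping your correct diagnosis that \eqref{eq_hr_neu} and the strengthened $u(m)$ condition are needed to tame the weight singularity $(m/k)^{\gamma}$ at small $k$ --- your proposal coincides with the actual proof in \cite{diss}.
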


\begin{remark}\label{rem:gamma:early}	The previous result can be used to establish that for sublinear changes a $\gamma$ close to $\frac 1 2$ leads to an asymptotically smaller delay time and is thus preferable for early changes (see Section~\ref{proof_sec_imp} for a proof).  However, this comes at the cost of a higher probability of a false alarm at the very beginning of the monitoring period (see e.g.\ Figure~1 in \cite{seqcp}).
	
	A corresponding analysis for late changes by means of the asymptotic delay times is left for future work. However, given that the procedure stops as soon as the critical curve (as defined by $c_m\,w_{\gamma}(m,k)^{-1}$) is exceeded by the monitoring statistic $|\Gamma(m,k)|$ one can easily compare the critical curves instead. It turns out that -- for constant critical values $c_m=c_{\gamma}$ chosen as $1-\alpha$-quantiles of the limit distribution in the no-change scenario as in Remark~\ref{rem:test} --  critical curves corresponding to smaller $\gamma$ are above those with larger $\gamma$ first, but drop beneath them relatively quickly showing that larger values of $\gamma$ are better for early changes while smaller ones are better for late changes. In fact, the simulations in \cite{kirch2018modified} show that this happens relatively soon, leading to best results for the choice $\gamma=0$. Indeed the quotient of two weight functions as in \eqref{wgamma} with $1/2>\gamma_1>\gamma_2\ge 0$ is given by
	\begin{align*}
		\frac{c_{\gamma_1}}{c_{\gamma_2}}\,\frac{w_{\gamma_2}(m,k)}{w_{\gamma_1}(m,k)}=\frac{c_{\gamma_1}}{c_{\gamma_2}}\,\left( \frac{k}{m+k} \right)^{\gamma_1-\gamma_2}
	\end{align*}
	where $\frac{c_{\gamma_1}}{c_{\gamma_2}}>1$. Consequently, this quotient is first smaller than one, but will eventually be larger than one. In fact, looking more closely, one can easily achieve that this happens for the first time, when $k>\eta\, m$ for some suitable $\eta>0$. In particular, this guarantees, that $\gamma=0$ achieves best results in the superlinear case, which is one of the reasons we restrict the main discussion in this paper to this case.
\end{remark}

\subsection{Asymptotic expectation and variance of the delay times and main proof ideas}\label{sec:derivation}
In this section we shed some light at the derivation of the standardizing sequences $a_m$, representing the asymptotic expected delay, as well as $b_m$, representing its asymptotic variability, which is a crucial step in the proof of Theorem~\ref{thm:cond:version} and thus ultimately of Theorem~\ref{thm:cond}.

With the notation of Theorem~\ref{thm:cond:version} the aim it is to derive
normalizing sequences $a_m=a_m(s_{1,m},s^*_{1,m})$ and $b_m=b_m(s_{1,m},s^*_{1,m})$ such that the limit distribution of
$(\widetilde{\kappa}_m-a_m)/b_m$
can be established. Denote  $P^*(\cdot):=P(\cdot|S_{1,m}=s_{1,m},S^*_{1,m}=s_{1,m}^*)$ with $\{s_{1,m}\}$, $\{s^*_{1,m}\}$ as in Theorem~\ref{thm:cond:version}.

Denoting 
\begin{align}\label{defN}
N:=N(m,x)=xb_m+a_m
\end{align}
 the main idea following \cite{aue2004delay} is the duality between the standardized delay time and the monitoring statistic:
\begin{align}\label{limtaun}
	\lim_{m\rightarrow\infty}P^*\left(\frac{\widetilde{\kappa}_m-a_m}{b_m}\leq x\right)&=1-\lim_{m\rightarrow\infty}P^*\left(\widetilde{\kappa}_m>N\right)\notag\\
	&=1-\lim_{m\rightarrow\infty}P^*\left(\sup_{k^*< k\leq N}w(m,k)\left|\widetilde{\Gamma}(m,k)\right|\leq c_m\right).
\end{align}

For other weight functions
it can be necessary to choose a more complicated $N$ that fulfills \eqref{limtaun}. For
 the weight function in \eqref{wgamma} with $0<\gamma\leq 1/2$, for example,
\begin{align*}
N(m,x)=\left(a_m^{1-\gamma}+x\frac{b_m}{a_m^{\gamma}}(1-\gamma)\right)^{\frac{1}{1-\gamma}}.
\end{align*}
 has been used in the proof of Theorem \ref{thm:delay:early} (see (5.8) in \cite{diss}) as well as by \cite{aue2004delay} and \cite{Fremdt}. For $\gamma=0$ the two definitions coincide.

 To get the desired result, in view of \eqref{limtaun}, it is sufficient to find sequences $e_m=e_m(s_{1,m},s^*_{1,m})$ and $v_m=v_m(s_{1,m},s^*_{1,m})$ such that
\begin{equation}\label{eq:cev}
	\frac{\left(c_m-e_m\right)}{\sqrt{v_m}}\rightarrow \frac{-x}{\sigma_{\infty}} \quad\mbox{as }m\rightarrow\infty
\end{equation}
and
\begin{align}\label{eq:normalize}
	P^*\left(\frac{\sup_{k^*< k\leq N}w(m,k)\left|\widetilde{\Gamma}(m,k)\right|-e_m}{\sqrt{v_m}}
\leq z\right)\rightarrow\Phi(z)\quad\mbox{for all } z\in\mathbb{R}
\end{align}
as then it holds
\begin{align*}
	P^*\left(\frac{\widetilde{\kappa}_m-a_m}{b_m}\leq x\right)&=1-P^*\left(\frac{\sup_{k^*< k\leq N}w(m,k)\left|\widetilde{\Gamma}(m,k)\right|-e_m}{\sqrt{v_m}}\leq \frac{c_m-e_m}{\sqrt{v_m}}\right)\\
&\rightarrow 1-\Phi\left( \frac{-x}{\sigma_{\infty}} \right)=\Phi\left( \frac{x}{\sigma_{\infty}} \right).
\end{align*}
To this end one first looks for sequences $e_m$ and $v_m$ such that \eqref{eq:normalize} holds. They will naturally depend on $N$ and thus on $a_m$ and $b_m$, such that the final task is to find
 $a_m$ and $b_m$ fulfilling \eqref{eq:cev}.

 So, first, let us have a closer look at \eqref{eq:normalize}: 
 Clearly, $e_m$ captures the expectation and $v_m$ the variance of $\sup_{k^*< k\leq N}w(m,k)\left|\widetilde{\Gamma}(m,k)\right|$.

 In the situation of this paper, the supremum in \eqref{eq:normalize} is dominated by $k\approx N$ (for a rigorous statement see Lemma \ref{delta.stop2} below) such that $e_m$ and $v_m$ can be constructed with respect to $\left|\widetilde{\Gamma}(m,N)\right|/g(m,N)$. The mean correction $e_m$ has to capture the signal part 
 as well as the historic parts of $\tilde{\Gamma}(m,k)$ as in \eqref{GammaH1} (the latter at least for late changes as discussed at the beginning of Section~\ref{sec_asym}), 
 where one has to take into account whether the historic parts strengthen or counteract the change.

 These considerations and \eqref{eq_condprob} lead to the choice
\begin{align}\label{eq:em}
e_m=\frac{(N-k^*)|\Delta_m|+\frac{k^*}{\sqrt{m}}s_{1,m}\sign(\Delta_m)+\frac{N-k^*}{\sqrt{m}}s_{1,m}^*\sign(\Delta_m)}{\sqrt{m}\left(1+\frac Nm\right)}.
\end{align}
Then, the limit distribution is determined by the monitoring part which for $k=N$ fulfills a central limit theorem with $N$ summands and thus requires a scaling of $1/\sqrt{N}$. In combination with the weight function this leads to the choice
\begin{align}\label{eq:vm}
	v_m=Nw^2(m,N)\,\sigma_{\infty}^2.
\end{align}
Proposition~\ref{delta.lim2} shows that the above choices of $e_m$ and $v_m$ indeed fulfill \eqref{eq:normalize}. By \eqref{defN}, \eqref{eq:em} and \eqref{eq:vm} we get 
\begin{align*}
&\frac{c_m-e_m}{\sqrt{v_m}}\\
&=\frac{|\Delta_m|}{\sqrt{a_m+x b_m}}\left[\frac{-x}{\sigma_{\infty}}b_m\left(1-\frac{c_m}{\sqrt{m}|\Delta_m|}+\frac{s_{1,m}^*\sign(\Delta_m)}{\sqrt{m}|\Delta_m|}\right)\right.\\
&\quad\quad\left.-a_m\left(1-\frac{c_m}{\sqrt{m}|\Delta_m|}+\frac{s_{1,m}^*\sign(\Delta_m)}{\sqrt{m}|\Delta_m|}\right)+k^*\left(1+\frac{(s_{1,m}^*-s_{1,m})\sign(\Delta_m)}{\sqrt{m}|\Delta_m|}\right)+\frac{c_m\sqrt{m}}{|\Delta_m|}\right].
\end{align*}
Choosing as in \eqref{defam}
\begin{align}\label{defam2}
a_m=\left(1-\frac{c_m}{\sqrt{m}|\Delta_m|}+\frac{s_{1,m}^*\sign(\Delta_m)}{\sqrt{m}|\Delta_m|}\right)^{-1}\left(k^*\left(1+\frac{(s_{1,m}^*-s_{1,m})\sign(\Delta_m)}{\sqrt{m}|\Delta_m|}\right)+\frac{c_m\sqrt{m}}{|\Delta_m|}\right)
\end{align}
the terms on the last line cancel, such that  by \eqref{eq_s_m} 
\begin{align*}
	\frac{c_m-e_m}{\sqrt{v_m}}=\frac{|\Delta_m|\,b_m}{\sqrt{a_m+x b_m}}\left[\frac{-x}{\sigma_{\infty}} (1+o(1))\right].
\end{align*}
By Assumption \ref{change.ass}(ii) and \eqref{eq_s_m} it holds
\begin{align}\label{eq:am:approx}
a_m=\left(k^*+\frac{c_m\,\sqrt{m}}{|\Delta_m|}\right)\, \left( 1+o(1) \right).
\end{align}
As $k^*\geq 1$ it follows for $m$ large enough
\begin{align}\label{ineq:amdelta}
a_m\Delta_m^2\geq \frac 1 2 c_m\sqrt{m}|\Delta_m|\rightarrow \infty.
\end{align}
Hence, choosing  as in \eqref{defbm}
\begin{align}\label{defbm2}
	b_m=\sqrt{a_m}/|\Delta_m|
\end{align}
guarantees
\begin{align}\label{eq_bmam}
\frac{b_m}{a_m}=\frac{1}{\sqrt{a_m}|\Delta_m|}\rightarrow 0
\end{align}
such that
\begin{align*}
	\frac{|\Delta_m|\, b_m}{\sqrt{a_m+x b_m}}=\frac{\sqrt{a_m}}{\sqrt{a_m\left( 1+x\,\frac{b_m}{a_m} \right)}}\rightarrow 1.
\end{align*}
This shows that \eqref{eq:cev} is fulfilled for these choices of normalizing sequences. 
By these considerations and Proposition~\ref{delta.lim2} below the proof of Theorem~\ref{thm:cond:version} is complete.

\subsection{Simulations}\label{sec:sim}

\begin{figure}
	\centering
	\subfloat[$\beta=0.25$]{
\includegraphics[width=0.5\textwidth]{./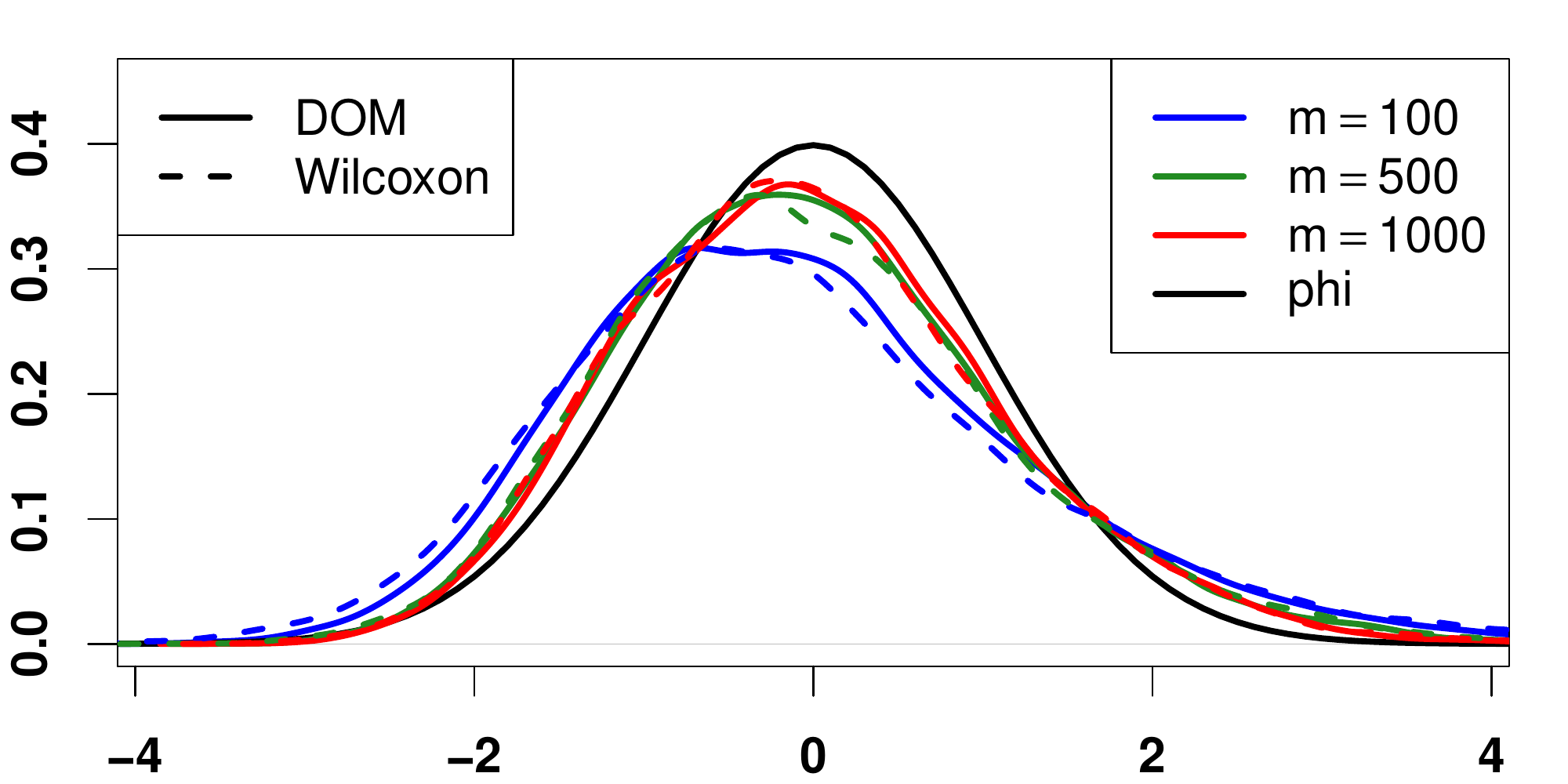}
	}
	\subfloat[$\beta=0.75$]{
\includegraphics[width=0.5\textwidth]{./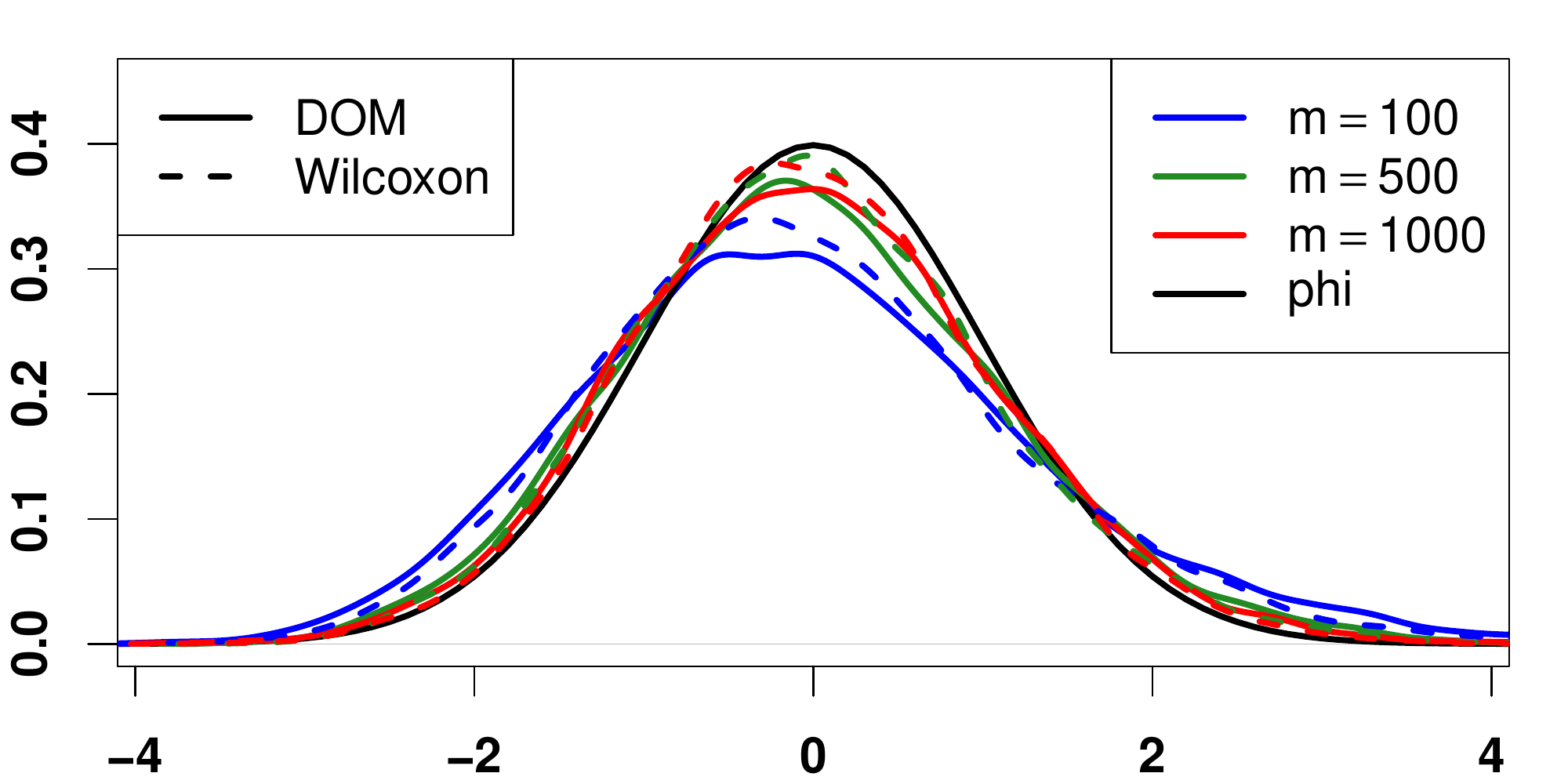}
	}\\
	\subfloat[$\beta=1$]{
\includegraphics[width=0.5\textwidth]{./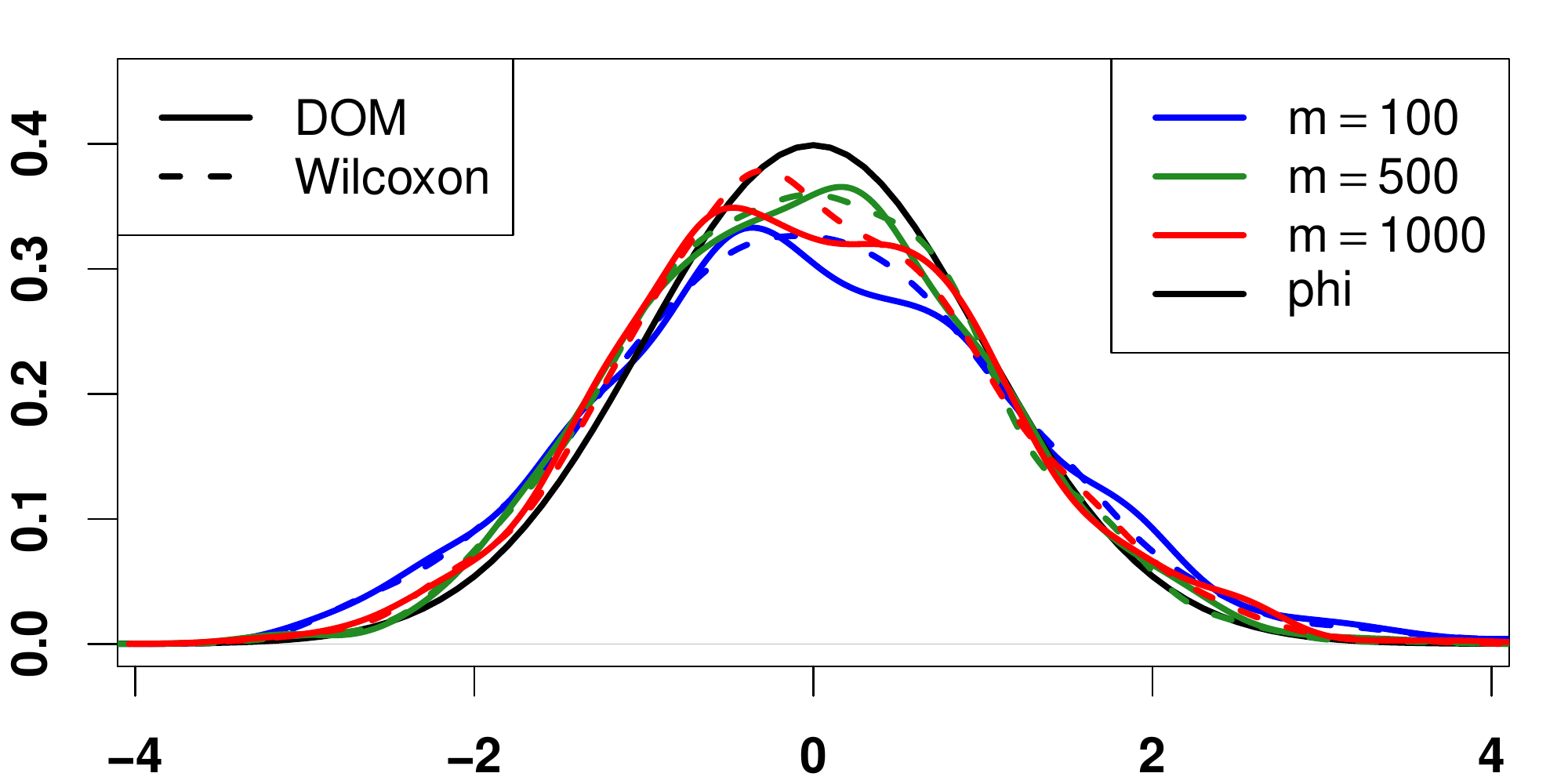}
	}
	\subfloat[$\beta=1.4$]{
\includegraphics[width=0.5\textwidth]{./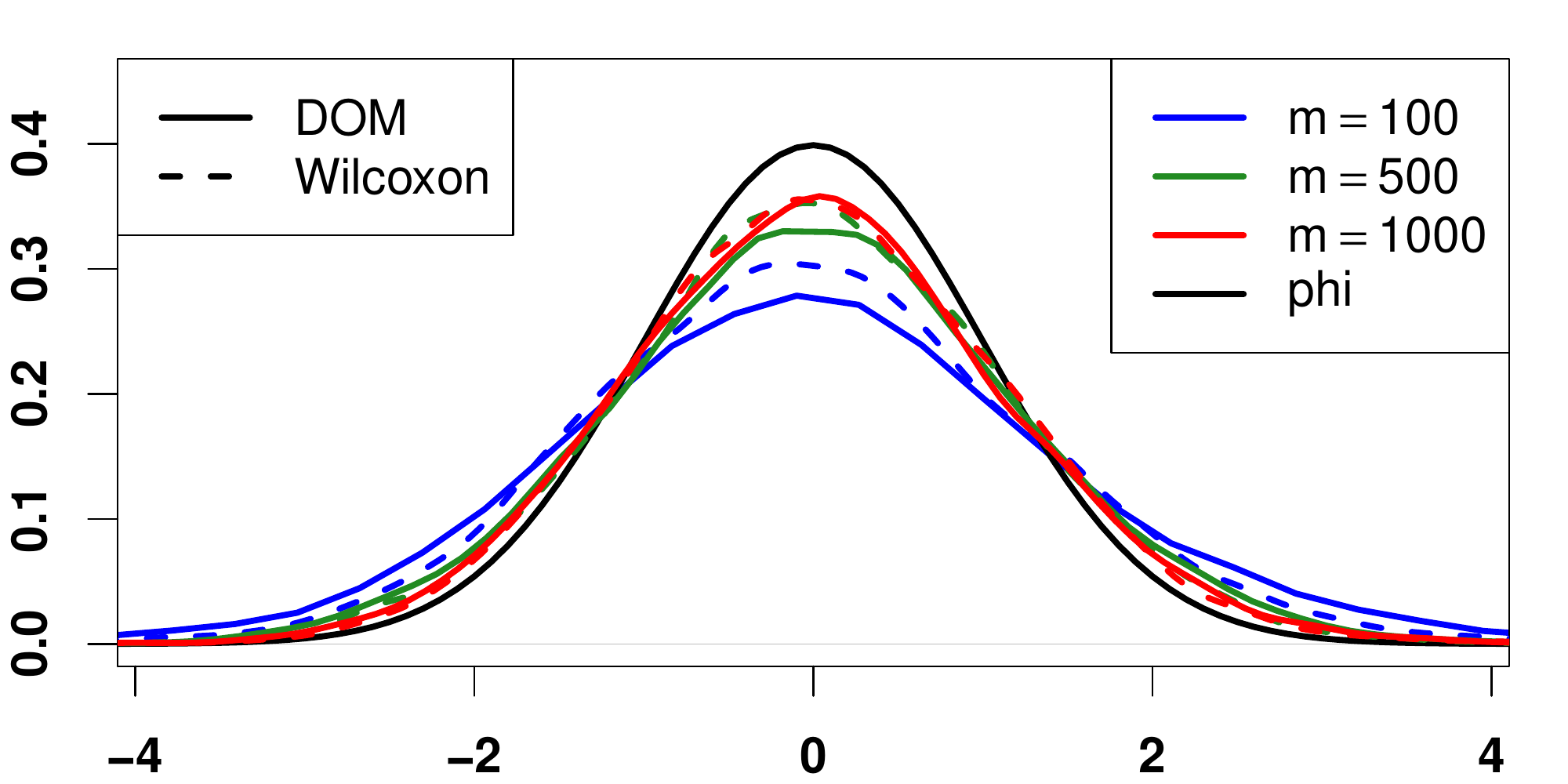}
	}
	\caption{Estimated densities of the standardized stopping times for i.i.d.\ normal data according to Corollary \ref{cor_uncond} for early changes (top row) as well as late changes (bottom row). The dashed lines show the Wilcoxon monitoring scheme while the solid line shows the DOM kernel. Different colors give different monitoring lengths, while the black lines indicates a standard normal density.}
	\label{figure1_neu}
\end{figure}

In Sections \ref{sec_asym} and \ref{sec:imp}, we have shown that the stopping time converges to a Gaussian distribution if standardized appropriately. 
We will now evaluate the fit obtained from that asymptotic approximations in small samples numerically.

To this end we simulate mean changes as in \eqref{meanex} with $d_m=1$.  We use historic data sets of length $m=100,500,1000$ and a monitoring horizon of $30m$. For early changes we use a constant threshold $c^{W/D}_{\alpha}=\sigma_{W/D}q_{1-\alpha}$ fulfilling both Assumptions~\ref{ass_neu_stoch} and \ref{ass_as_S}, where $q_{1-\alpha}$ is the $(1-\alpha)$-quantile of $\sup_{0<t<1}|W(t)|$ with $\alpha=5\%$ (corresponding to a $5\%$-level test as in Remark~\ref{rem:test}), where $q_{1-\alpha}$ is based on $50\,000$ simulations of a Wiener process on a grid of $10\,000$ points.
For late changes we choose $c_m=\sigma_{W/D}\sqrt{2\log\log m}$ such that equality in \eqref{eq_neu_rem_2} holds as $\rho=\sigma_{W/D}$. This choice fulfills Assumption~\ref{ass_neu_stoch} and almost fulfills Assumption \ref{ass_as_S}, see Remark \ref{rem_cm}.

 For better comparability for different kernels as well as different distributions of the underlying time series, we additionally divide the standardized stopping time by $\sigma^{W/D}_{\infty}$ such that the limit distribution is standard normal in all situations.

 We simulate i.i.d.\ Gaussian and $t(3)$ distributed time series as well as AR(1) time series with parameter $a=0.2$ Gaussian and $t(3)$ innovations. In the latter case, $\sigma_{W/D}$ and $\sigma^*_{W/D}$  are replaced by estimators of the long-run covariance which is estimated with a Bartlett kernel based on the historic data set.
$\Delta^D$ and $\Delta^W$ are given by (\ref{delta.cusum}) and (\ref{delta.wil}), where the latter is determined numerically. For time series with  $t$-distributed errors the latter is estimated by means of Monte-Carlo simulations (based on $20\,000$ independent time series of length $10\,000$ each), so is $F$ for the Wilcoxon statistic. All curves below are based on $10\,000$ repetitions.
 
%

Figure~\ref{figure1_neu} shows  estimated densities of the standardized stopping times  for different lengths of the historic data sets for independent standard normally distributed observations along with the standard normal density. The standardization is done as in  Corollary \ref{cor_uncond} including a division by $\sigma_{\infty}$. In all cases the fit is reasonably good and becomes better with increasing length of the historic data set.

\begin{figure}
	\subfloat[i.i.d. $t(3)$ errors]{
		\includegraphics[width=0.33\textwidth]{./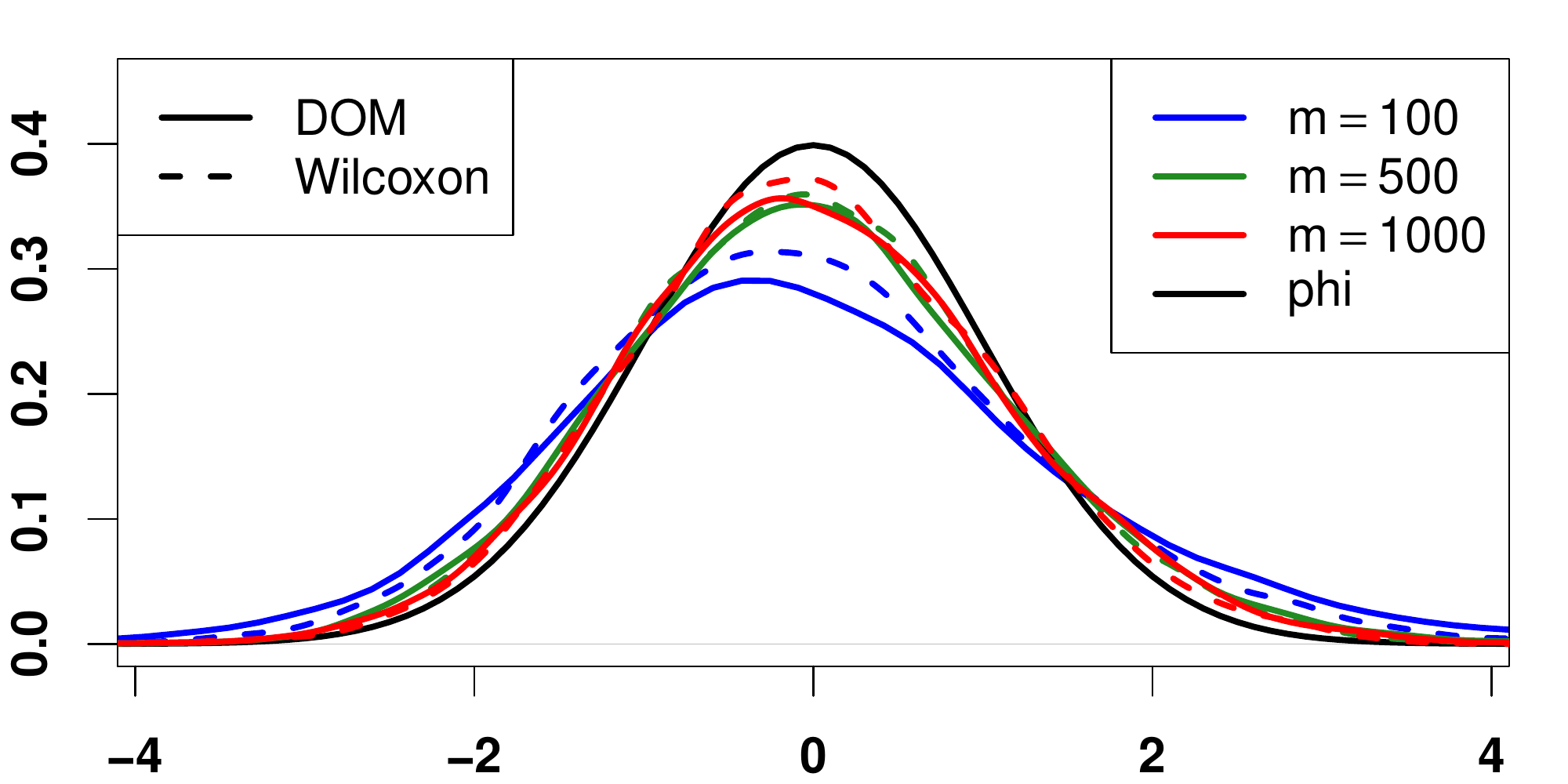}
	}
		\subfloat[AR(1) with $N(0,1)$]{
		\includegraphics[width=0.33\textwidth]{./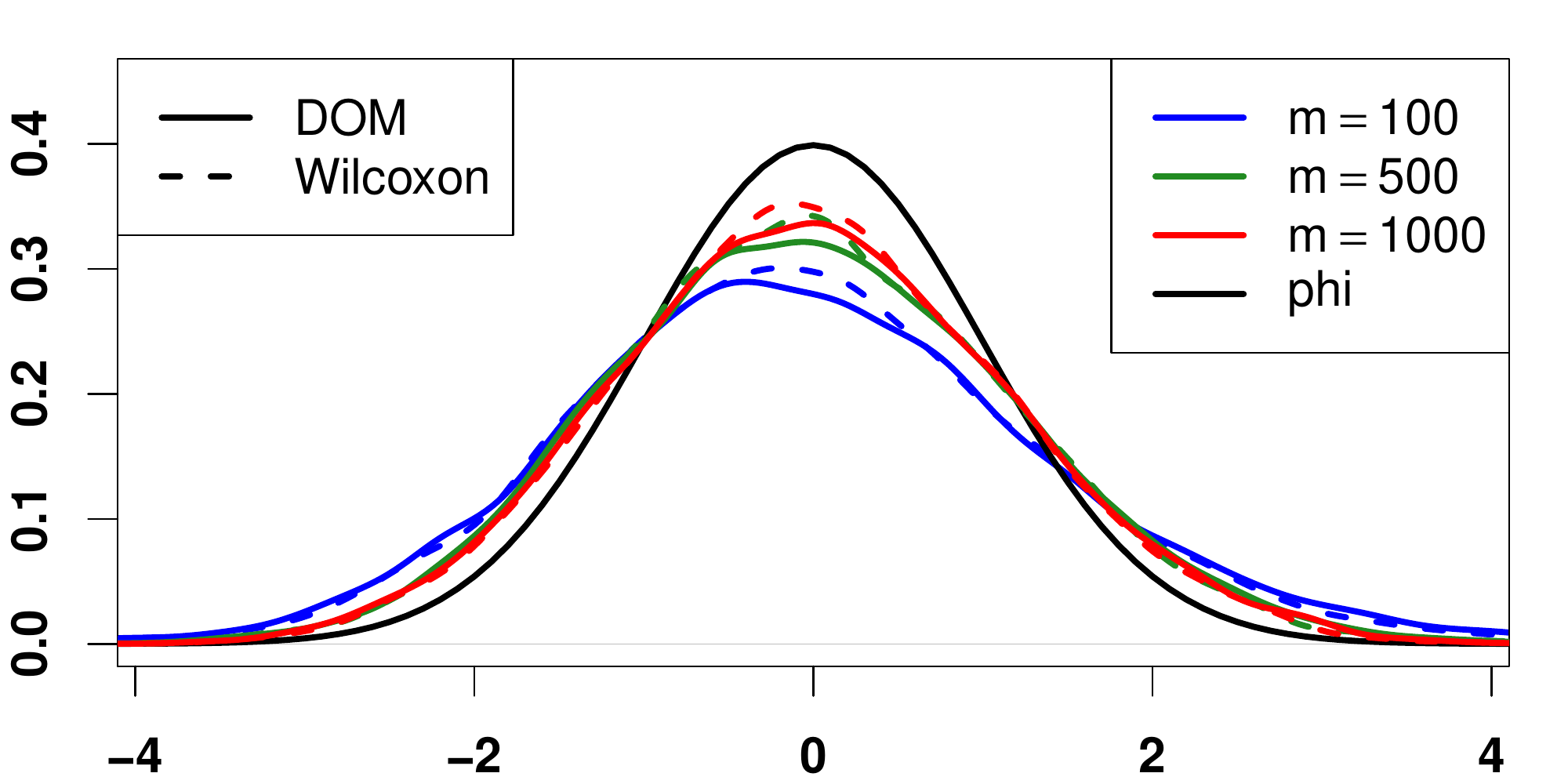}
	}
	\subfloat[AR(1) with $t(3)$]{
		\includegraphics[width=0.33\textwidth]{./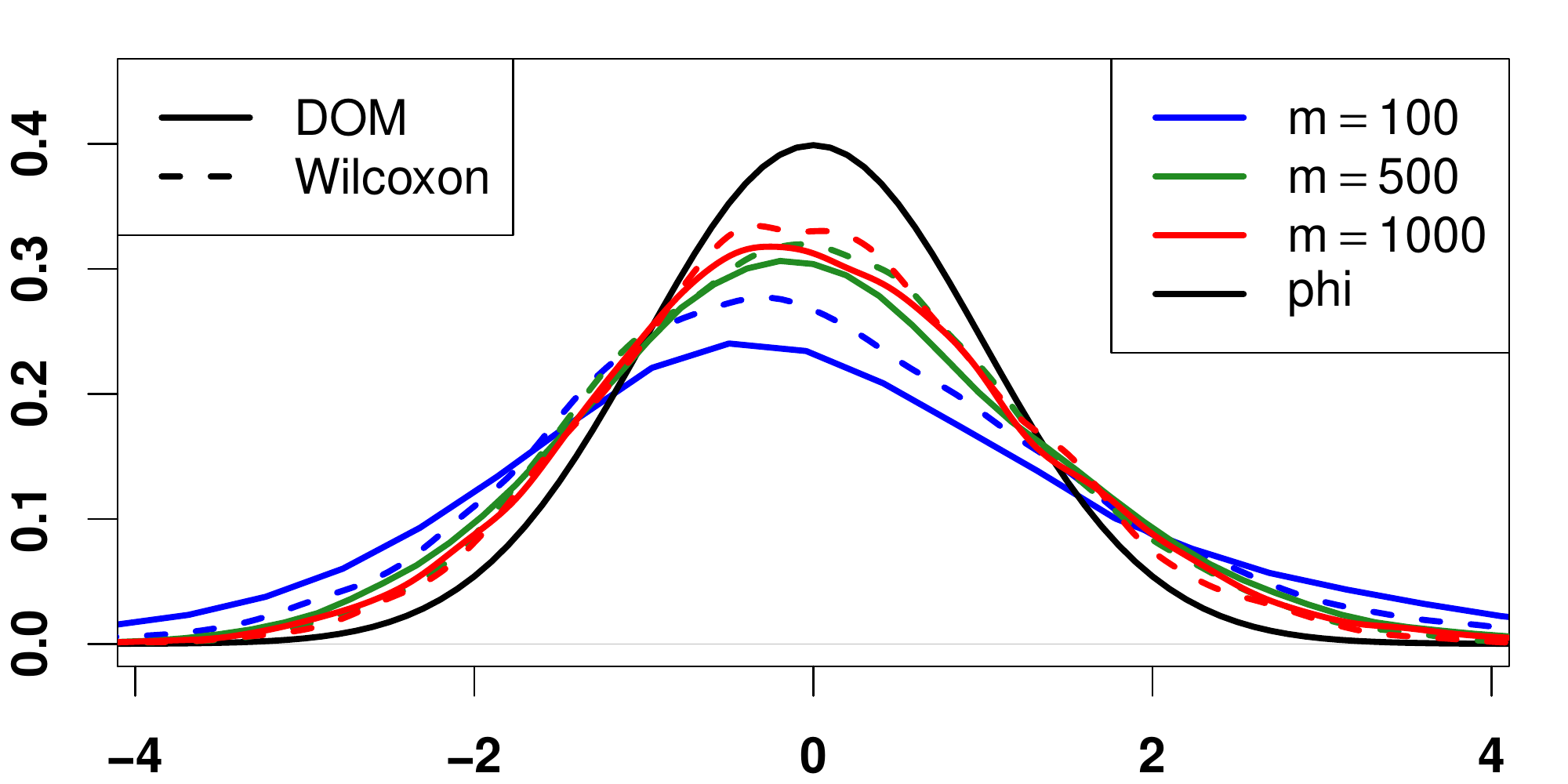}
	}
		\caption{Estimated densities of the standardized stopping times according to Corollary \ref{cor_uncond} for linear ($\beta=1$) changes for (a) i.i.d.\ $t(3)$ distributed errors as well as AR(1) time series with parameter $a=0.2$ errors with (b) normal innovations and (c) $t(3)$ innovations. 		}
		\label{figure2_neu}
\end{figure}

The simulation results for $t(3)$ distributed errors look very similar. For time series errors this is also true, but with a somewhat slower convergence i.e.\ somewhat longer historic data sets are required to get similar results. This is not surprising given that the effective sample  size is smaller for positively correlated time series errors and at the same time the long-variance has been estimated for each time series while the true variance has been used for i.i.d.\ data. To illustrate these points, Figure~\ref{figure2_neu} shows the corresponding results for $\beta=1$ (where results for different values of $\beta$ look very similar).

Simulation results for all cases if $b_m(S_{1,m},S^*_{1,m})$ is replaced by $b_m(0,0)$ as in Theorem~\ref{thm:aprime} a) as well as for sublinear and linear changes if $a_m(S_{1,m},S^*_{1,m})$ is replaced by $a_m(S_{1,m},0)$ as in Theorem~\ref{thm:aprime} b) look very similar. For sublinear changes by Theorem~\ref{thm:aprime} c) the asymptotic distribution is independent of both $S_{1,m}$ and $S_{1,m}^*$ such that $a_m(0,0)$ and $b_m(0,0)$ can be used. To illustrate this point Figure~\ref{figure3_neu} (a) and (b) give the corresponding simulation results for $\beta=0.5$ for i.i.d.\ normal errors. Clearly, the fit with the additional knowledge of $S_{1,m}$ and $S_{1,m}^*$ is somewhat better than the one without that additional information, but the difference is not very large.
As indicated by Remark~\ref{rem_1_imp} the factor in $a_m(0,0)$ is asymptotically negligible in this case and  has not been included in the previous literature. However, as seen in Figure~\ref{figure3_neu} (c) even for a historic sample length of $m=1000$ a large bias remains without this factor.

\begin{figure}
	\subfloat[$a_m(S_{1,m},S^*_{1,m})$, $b_m(S_{1,m},S^*_{1,m})$]{
		\includegraphics[width=0.33\textwidth]{./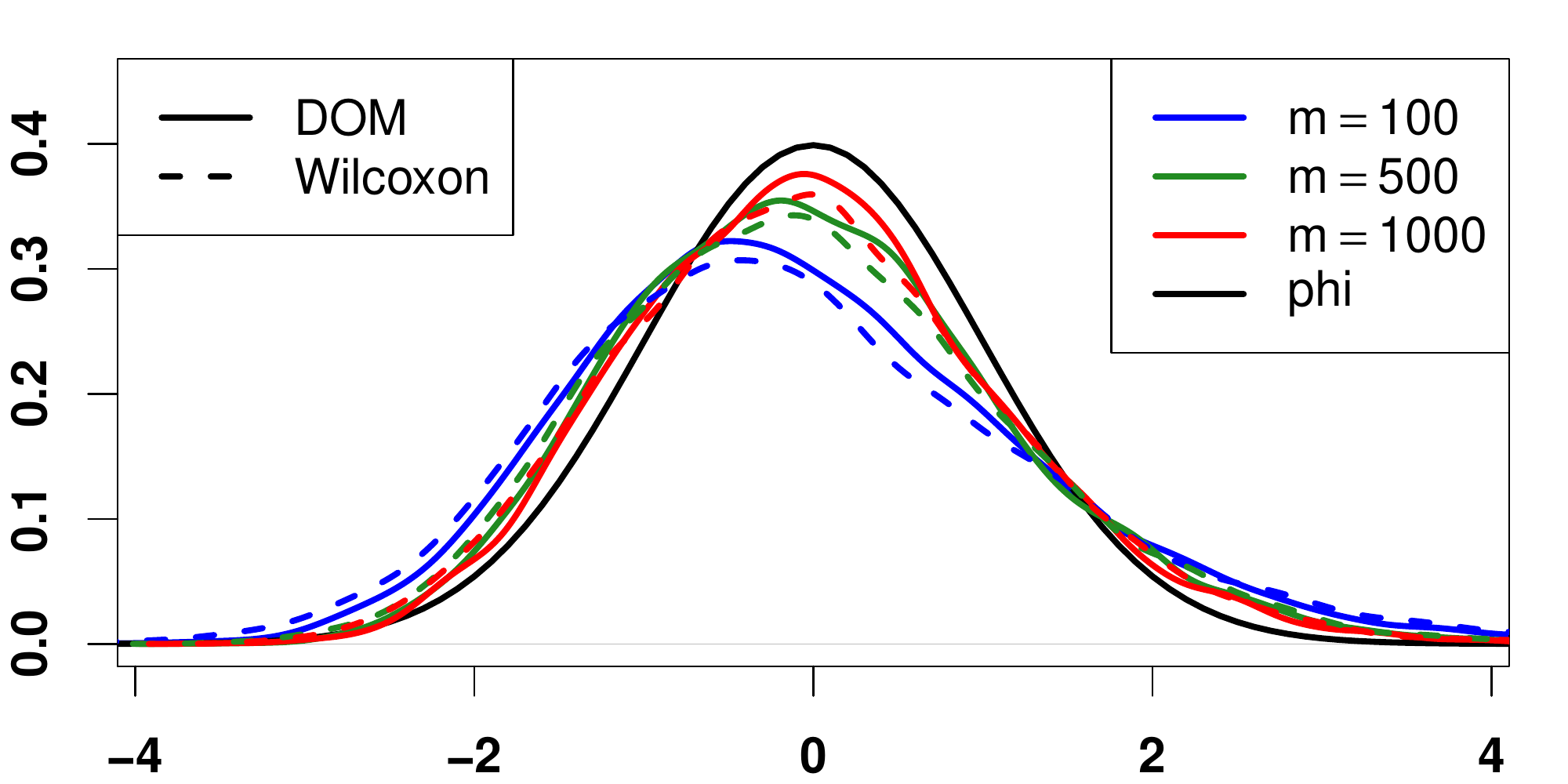}
	}
		\subfloat[$a_m(0,0)$, $b_m(0,0)$]{
		\includegraphics[width=0.33\textwidth]{./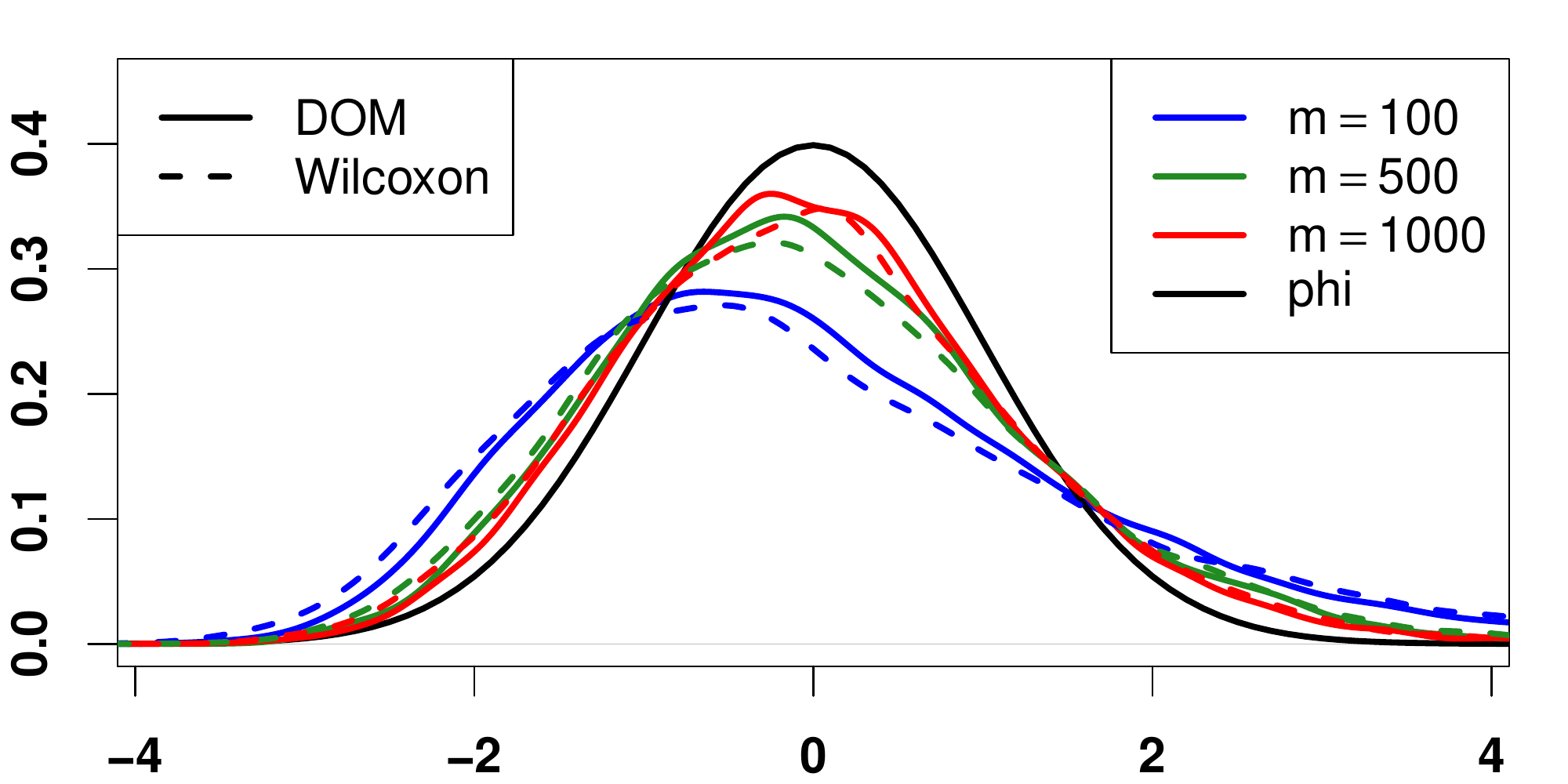}
	}
	\subfloat[no bias correction]{
		\includegraphics[width=0.33\textwidth]{./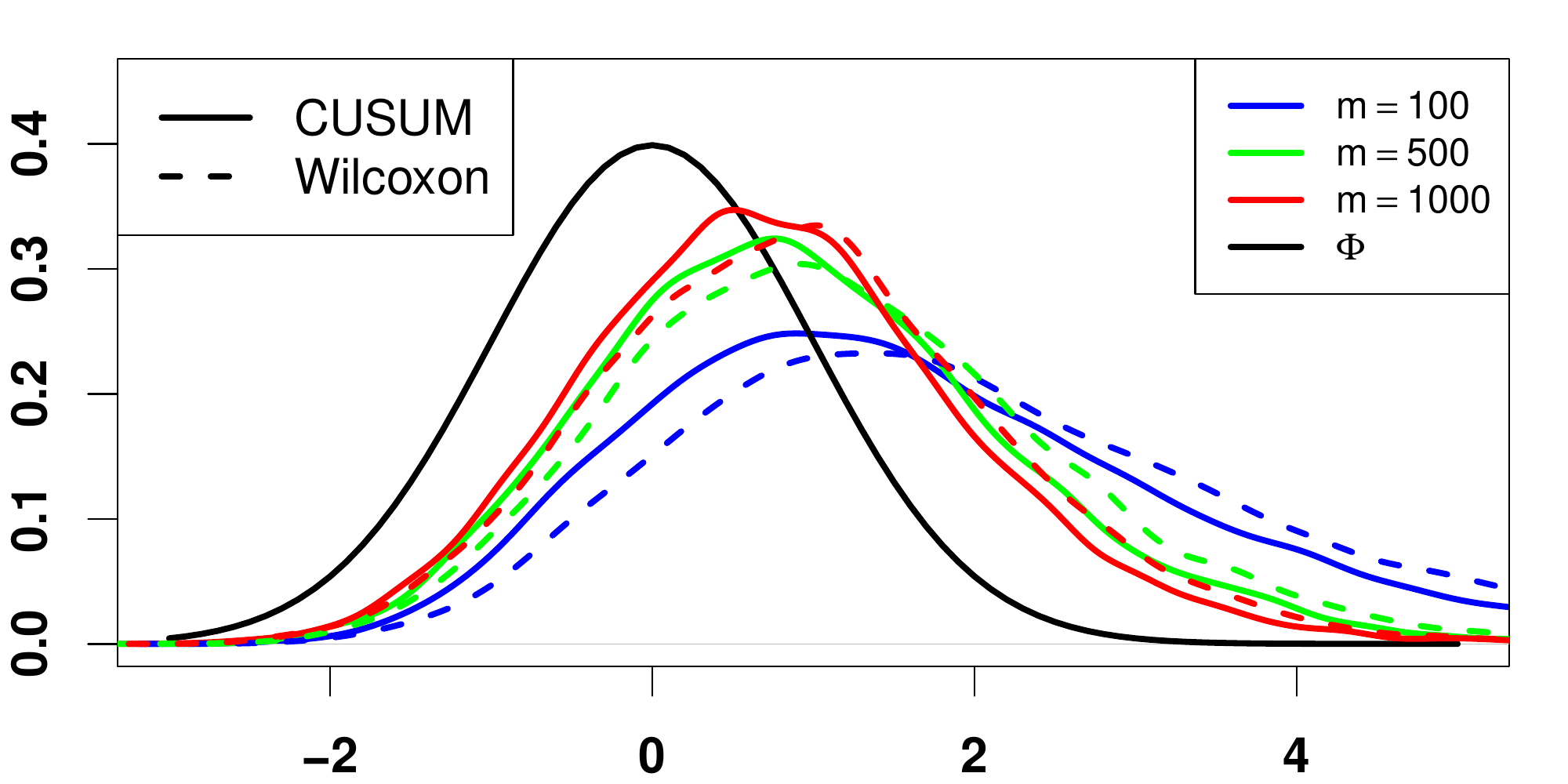}	}
	\caption{Estimated densities of the standardized stopping times for early changes with $\beta=0.5$ with i.i.d.\ normal errors. The standardization was chosen in (a) according to Corollary \ref{cor_uncond} with $a_m(S_{1,m},S^*_{1,m})$ and $b_m(S_{1,m},S^*_{1,m})$, in (b) and (c) according to Theorem~\ref{thm:aprime} c) with $a_m(0,0)$ and $b_m(0,0)$, where in (c) the bias-correcting factor $(1-\frac{c_m}{\sqrt{m}|\Delta_m|})^{-1}$ has been suppressed (see Remark~\ref{rem_1_imp}).
	}
		\label{figure3_neu}
\end{figure}

\section{DOM versus Wilcoxon sequential change point procedures}\label{section_COM_Wil}
In this section, we use the asymptotic results from the previous section to compare the detection delay of the DOM and the Wilcoxon sequential change point procedures. We then confirm these results by means of simulations.

\subsection{Theoretical comparison}

\begin{table}
\renewcommand{\arraystretch}{1.5}
\begin{tabular}{c|c|c|c|}
&N(0,1)&\mbox{Laplace}(0,1)&t(3)\\
\hline
$\frac{\sigma_W}{\left|\Delta_m^W\right|}-\frac{\sigma_D}{\left|\Delta_m^D\right|}$&0.109 &-0.126& -0.379
\end{tabular}
\caption{Difference in signal-to-noise ratio for different distributions.}
\label{scalediff}
\end{table}

In this section we compare the delay $a_m(0,0)$ as in \eqref{eq_am_00} for the two methods. For early changes this corresponds to the expected delay as given in Theorem~\ref{thm:aprime} c), while for late changes we have replaced the random quantities $S_{1,m}$ and $S_{1,m}^*$ in $a_m(S_{1,m},S_{1,m}^*)$ by their expected values.

Consider critical values as in the previous section $c_m=\sigma_{W/D}\,\tilde{c}_m$ with $\tilde{c}_m$ being equal to the asymptotic $(1-\alpha)$-quantile of $\sup_{0\le t\le 1}|W(t)|$ for early changes and to $\sqrt{2\log\log m}$ for late changes.

Some direct calculations show that 
\begin{align}
	a_m^W(0,0)-a_m^D(0,0)&=\left(\frac{\sigma_W}{\left|\Delta_m^W\right|}-\frac{\sigma_D}{\left|\Delta_m^D\right|}\right) \tilde{c}_m \sqrt{m}\left( 1+\frac{k^*}{m} \right)
\left(1-\frac{\sigma_{W}\,\tilde{c}_m}{\sqrt{m}|\Delta_m^{W}|}\right)^{-1}\,\left(1-\frac{\sigma_{D}\,\tilde{c}_m}{\sqrt{m}|\Delta_m^{D}|}\right)^{-1}
	\notag\\
	&= \left(\frac{\sigma_W}{\left|\Delta_m^W\right|}-\frac{\sigma_D}{\left|\Delta_m^D\right|}\right) \tilde{c}_m \sqrt{m}\left( 1+\frac{k^*}{m} \right) (1+o(1)).
	\label{eq_diff_stop}
\end{align}
Clearly, the difference in signal-to-noise ratio $\frac{\sigma_W}{\left|\Delta_m^W\right|}-\frac{\sigma_D}{\left|\Delta_m^D\right|}$ as given in Table~\ref{scalediff} determines which procedure detects changes quicker and quantifies by how much (relative to how late the change is and how long the historic data set is). This shows that -- as expected -- the Wilcoxon detects changes faster for more heavy-tailed distributions such as e.g.\ the $t(3)$ and Laplace distribution, while the DOM is preferable for Gaussian data.  Futhermore, the factor $1+\frac{k^*}{m}$ indicates that the difference is bigger for later changes which is not asymptotically negligible for late changes. Finally, a more detailed calculation shows that the term $1+o(1)$ is in fact always strictly larger than 1 such that a bias in the same direction as indicated by the difference in signal-to-noise ratio occurs and the true difference can be expected to be somewhat larger than indicated by the above expression.

\begin{figure}[b]
	\subfloat[$N(0,1)$, $\beta=0.25$]{
		\includegraphics[width=0.32\textwidth]{./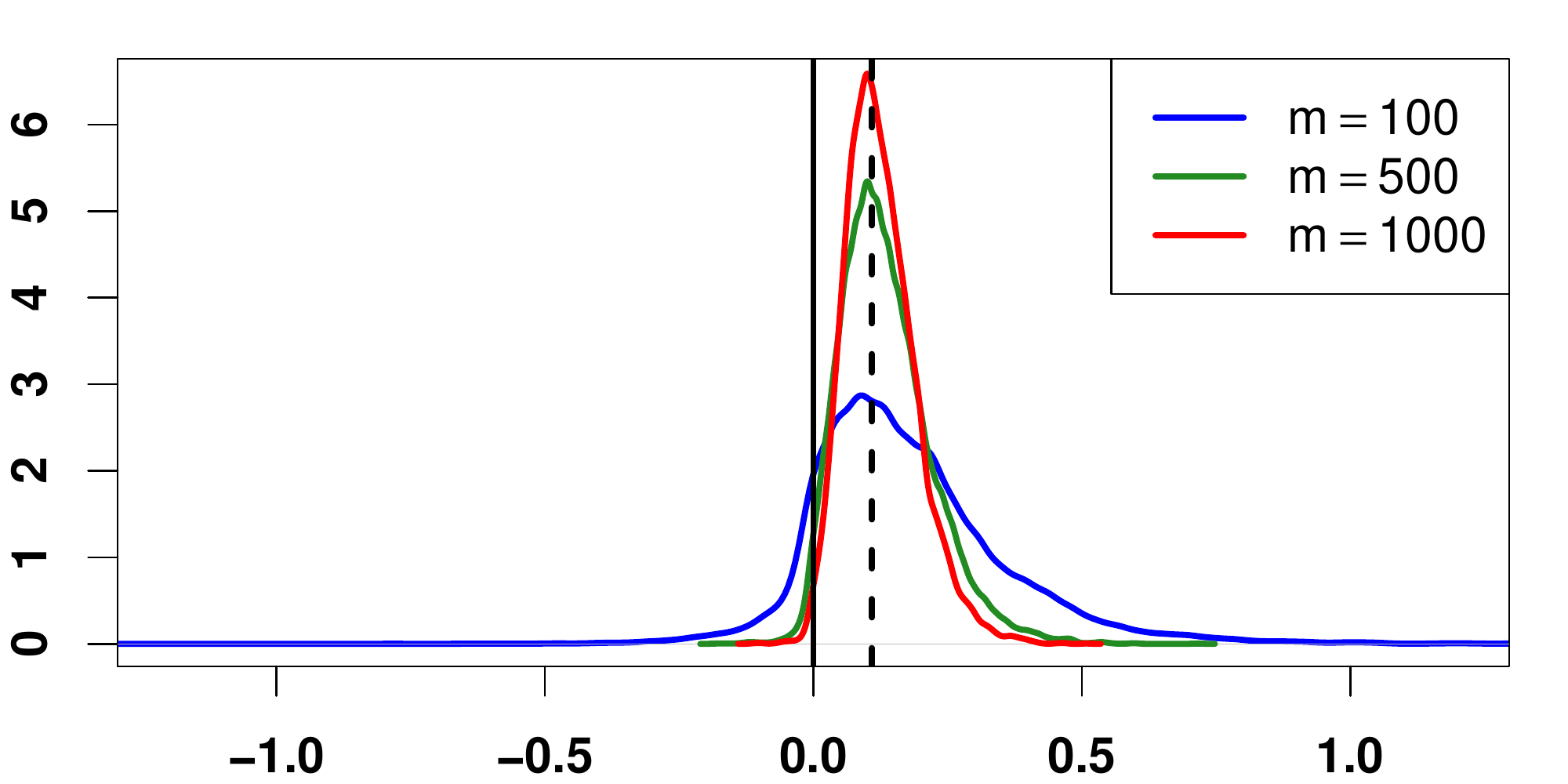}
		}
	\subfloat[$N(0,1)$, $\beta=0.75$]{
		\includegraphics[width=0.32\textwidth]{./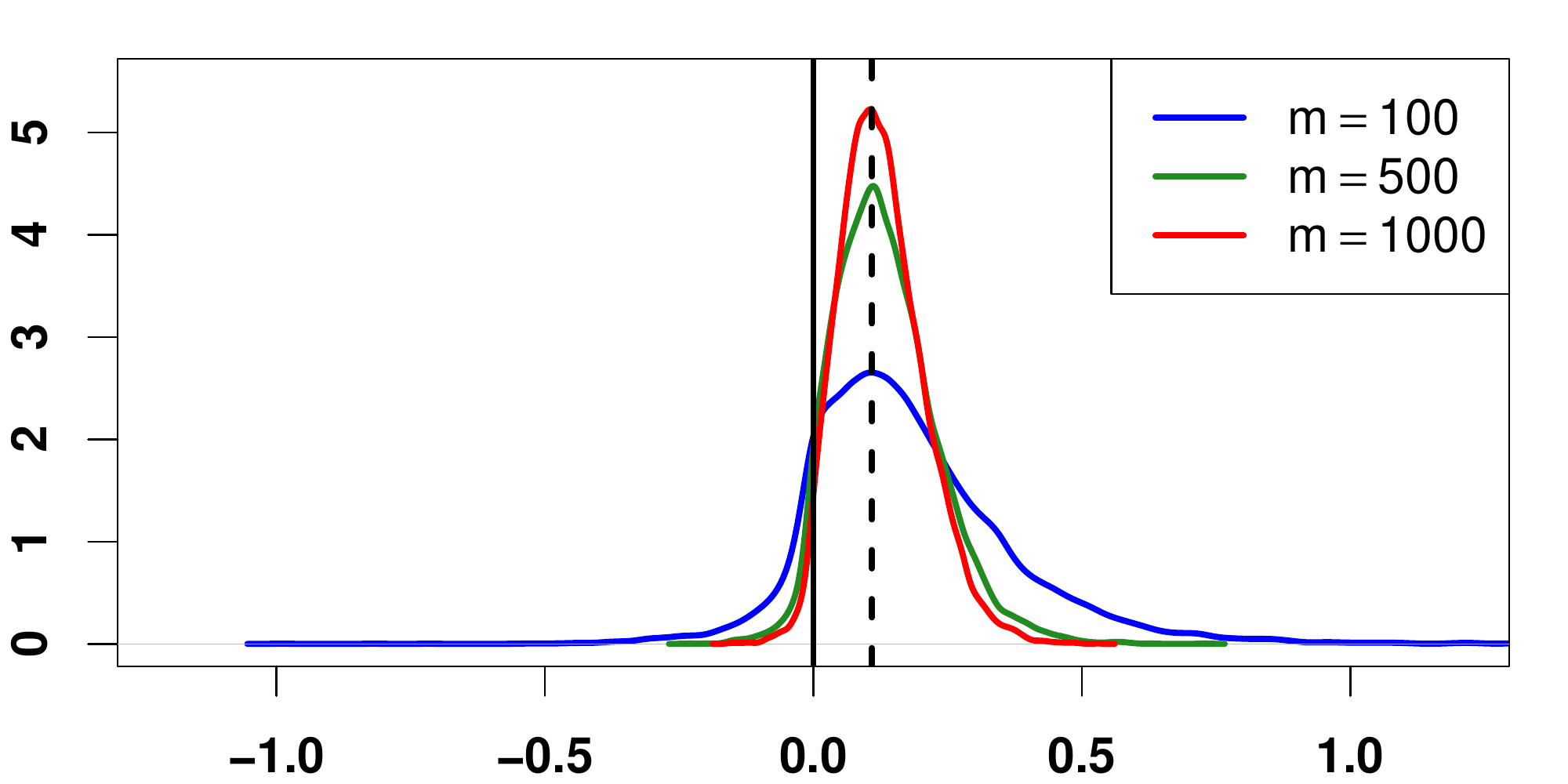}
	}
	\subfloat[$N(0,1)$, $\beta=1.4$]{
		\includegraphics[width=0.32\textwidth]{./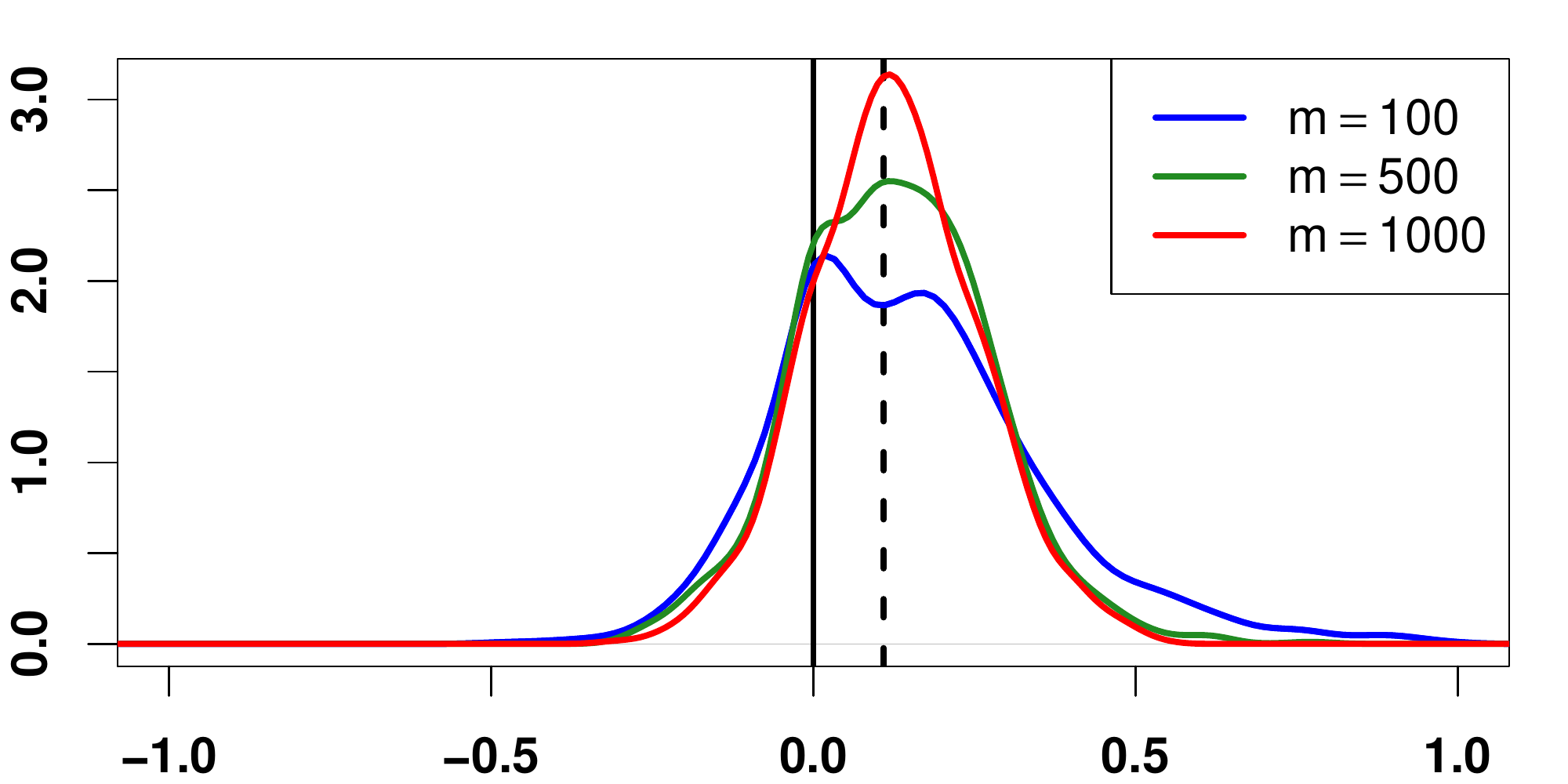}
	}\\
	\subfloat[$t(3)$, $\beta=0.25$]{
		\includegraphics[width=0.32\textwidth]{./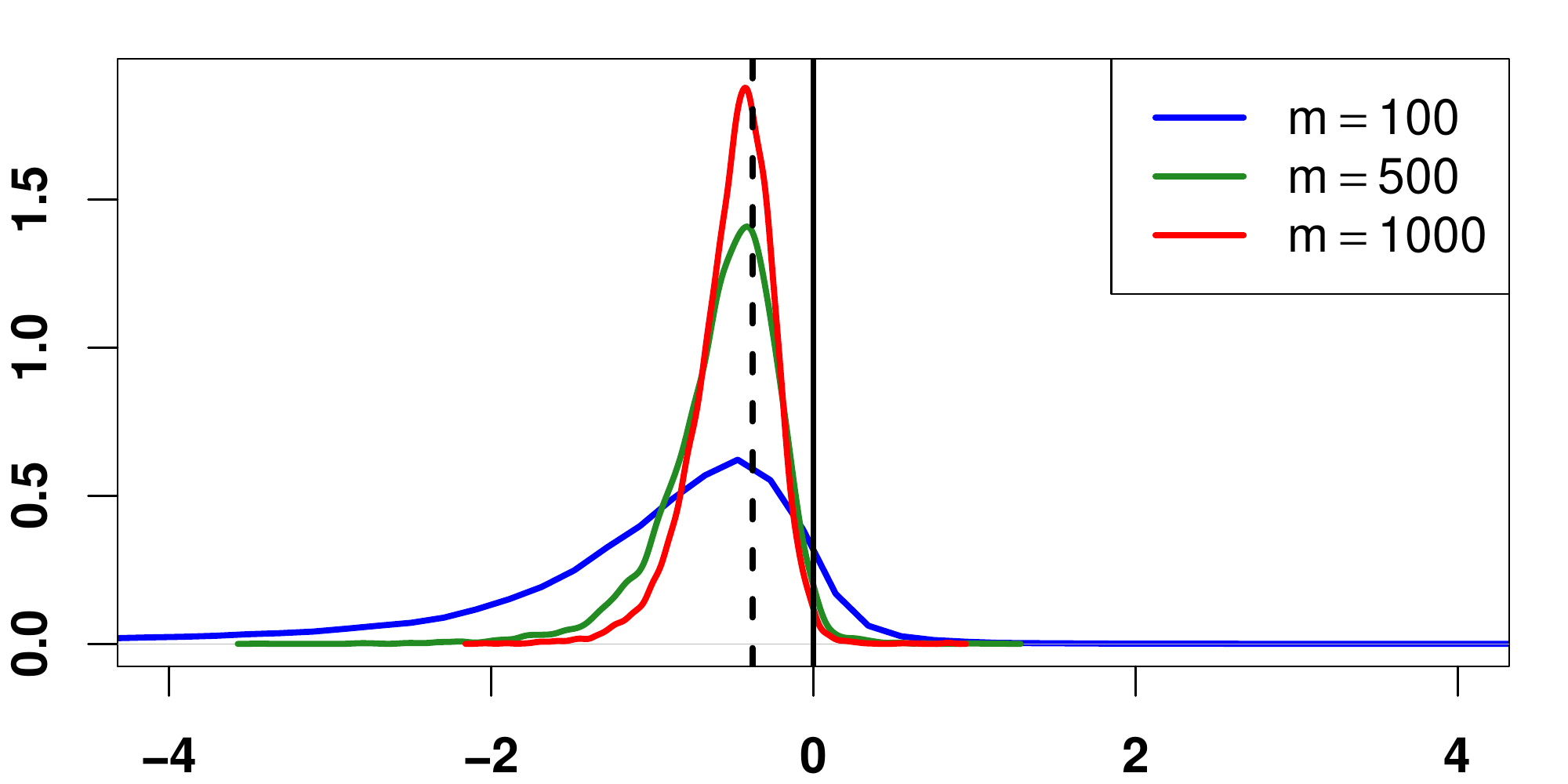}
		}
	\subfloat[$t(3)$, $\beta=0.75$]{
		\includegraphics[width=0.32\textwidth]{./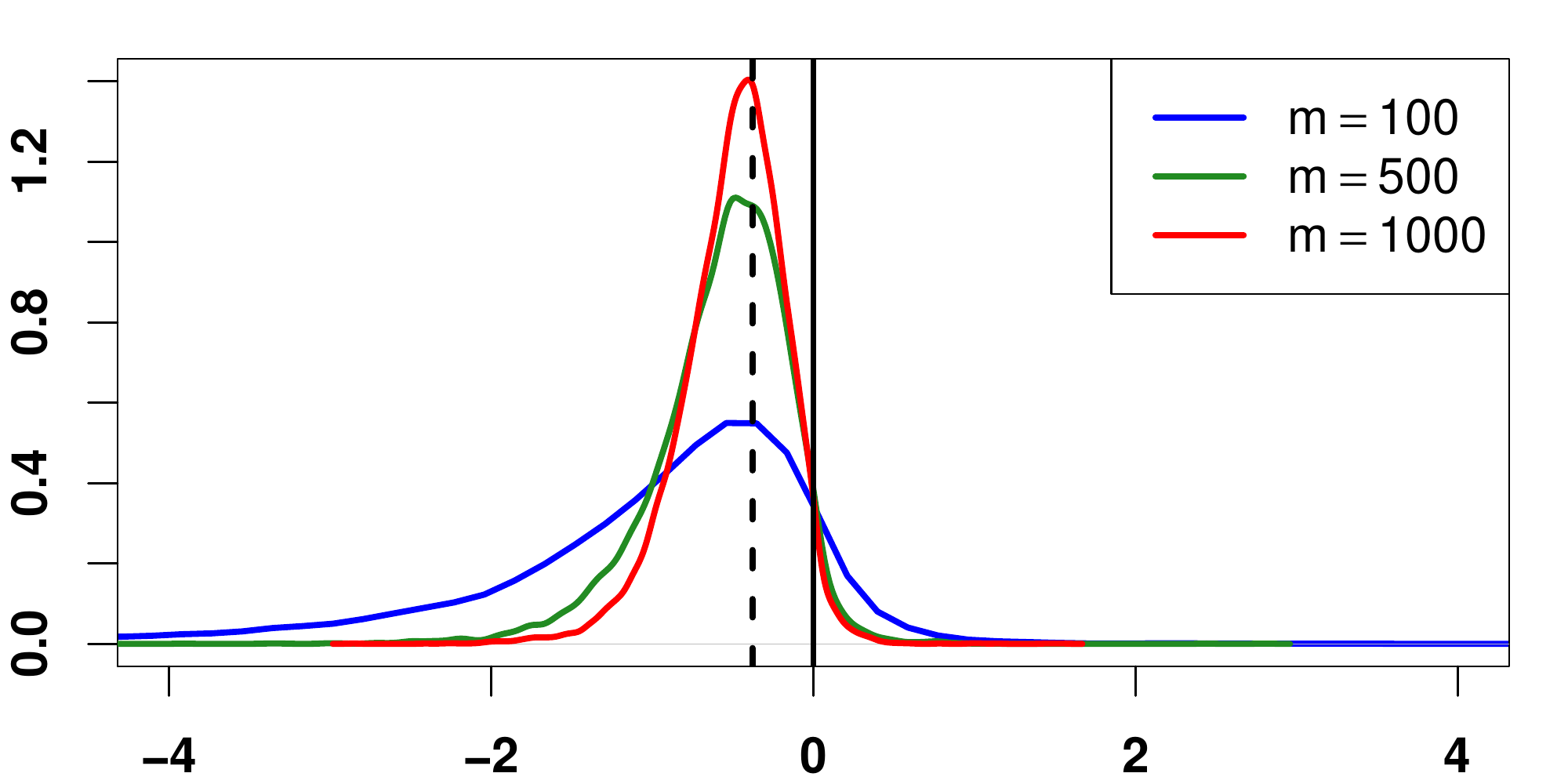}
	}
	\subfloat[$t(3)$, $\beta=1.4$]{
		\includegraphics[width=0.32\textwidth]{./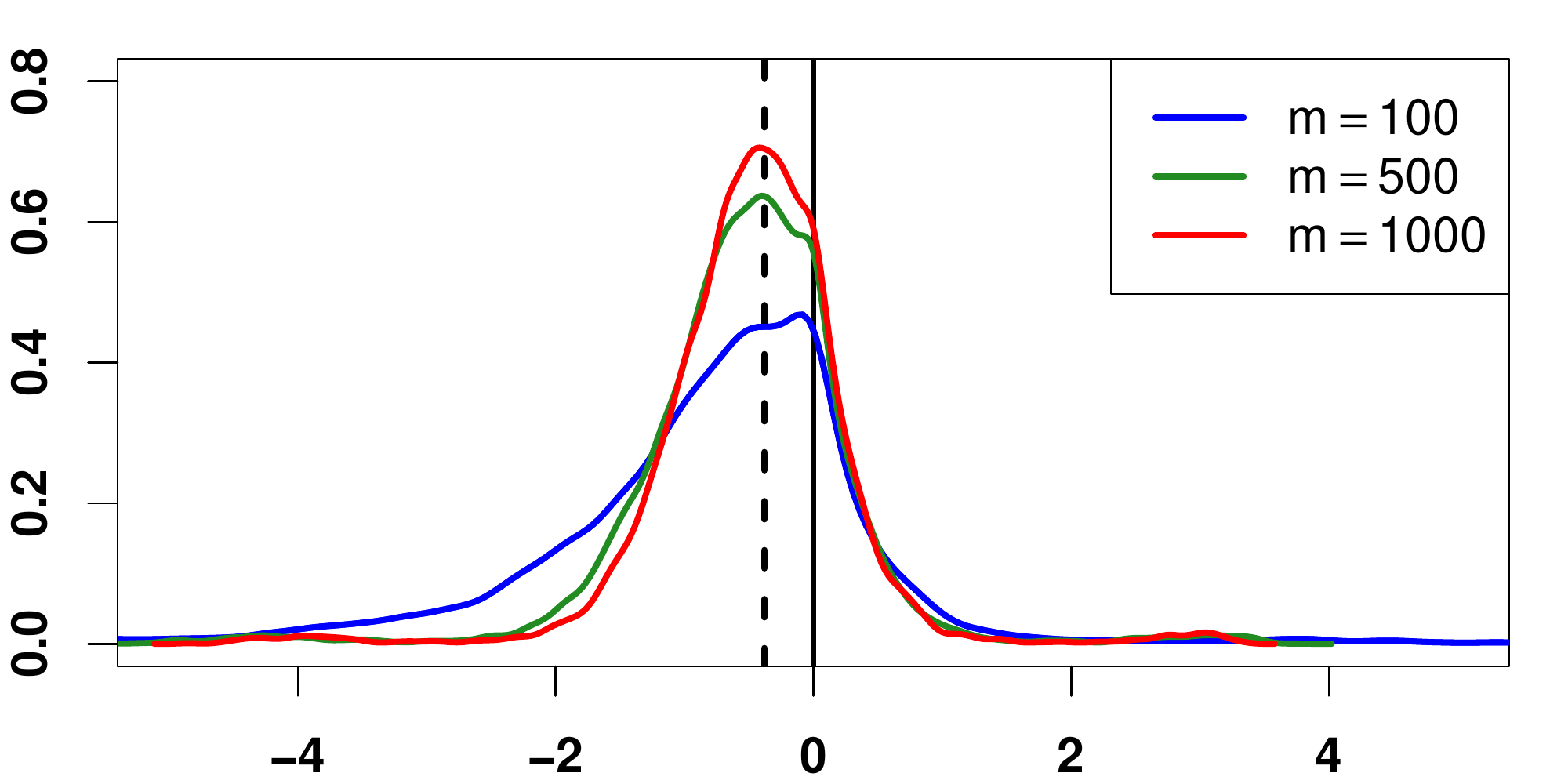}
	}

		\caption{Estimated densities 
			 of the observed delay times scaled by $\tilde{c}_m \sqrt{m}\left( 1+\frac{k^*}{m} \right)$ for i.i.d.\ normal (top row) as well as $t(3)$ (bottom row) data. The dashed line indicates the theoretical value from Table \ref{scalediff}, while the solid line indicates where 0 lies. 		}
		\label{figure_new_WD}
\end{figure}

\subsection{Comparison based on simulations}
In this section, we show simulations indicating the difference in stopping time for the two procedures. In order to have comparable results for different lengths of the historic data set, we standardize both $a_m^W(0,0)-a_m^D(0,0)$ as well as the actually difference of the observed delay time  by $\tilde{c}_m \sqrt{m}\left( 1+\frac{k^*}{m} \right)$ see \eqref{eq_diff_stop}.

Exemplary estimated densities of the observed difference for i.i.d.\ data can be found in Figure~\ref{figure_new_WD}.
Positive values indicate that the DOM procedure was faster while negative values indicate that the Wilcoxon procedure was faster, where for better readability there is a vertical solid line at 0. The dashed line indicates the theoretic value as given in Table~\ref{scalediff}.

 For the standard normal distribution, the estimated densities are well concentrated around the theoretical value showing that the DOM procedure detects changes more quickly than the Wilcoxon procedure. Opposite behavior can be observed for the heavy-tailed $t$ distribution as predicted by the negative value in Table \ref{scalediff}. In all cases the predicted bias in the direction towards the theoretical quantile can be seen. Keeping in mind that the actual delay times in all plots have been divided by $(1+k^*/m)$, it is clear that the advantage of one procedure over the other is strongly increasing the later the change occurs.
 Taking a closer look at the values on the $x$-axis indicates that potential  gain from using the Wilcoxon kernel in case of a $t(3)$ distribution is much larger than the loss in the normal case as predicted by Table~\ref{scalediff}.

 For time series errors as above (i.e.\ an AR(1) time series with parameter $a=0.2$ and Gaussian as well as $t(3)$ innovations) the general tendency remains true, where the signal-to-noise ratio is no longer given by the above table. Figure~\ref{figure_new_WD2} shows the corresponding results.  

\begin{figure}
	\subfloat[$N(0,1)$, $\beta=0.25$]{
		\includegraphics[width=0.32\textwidth]{./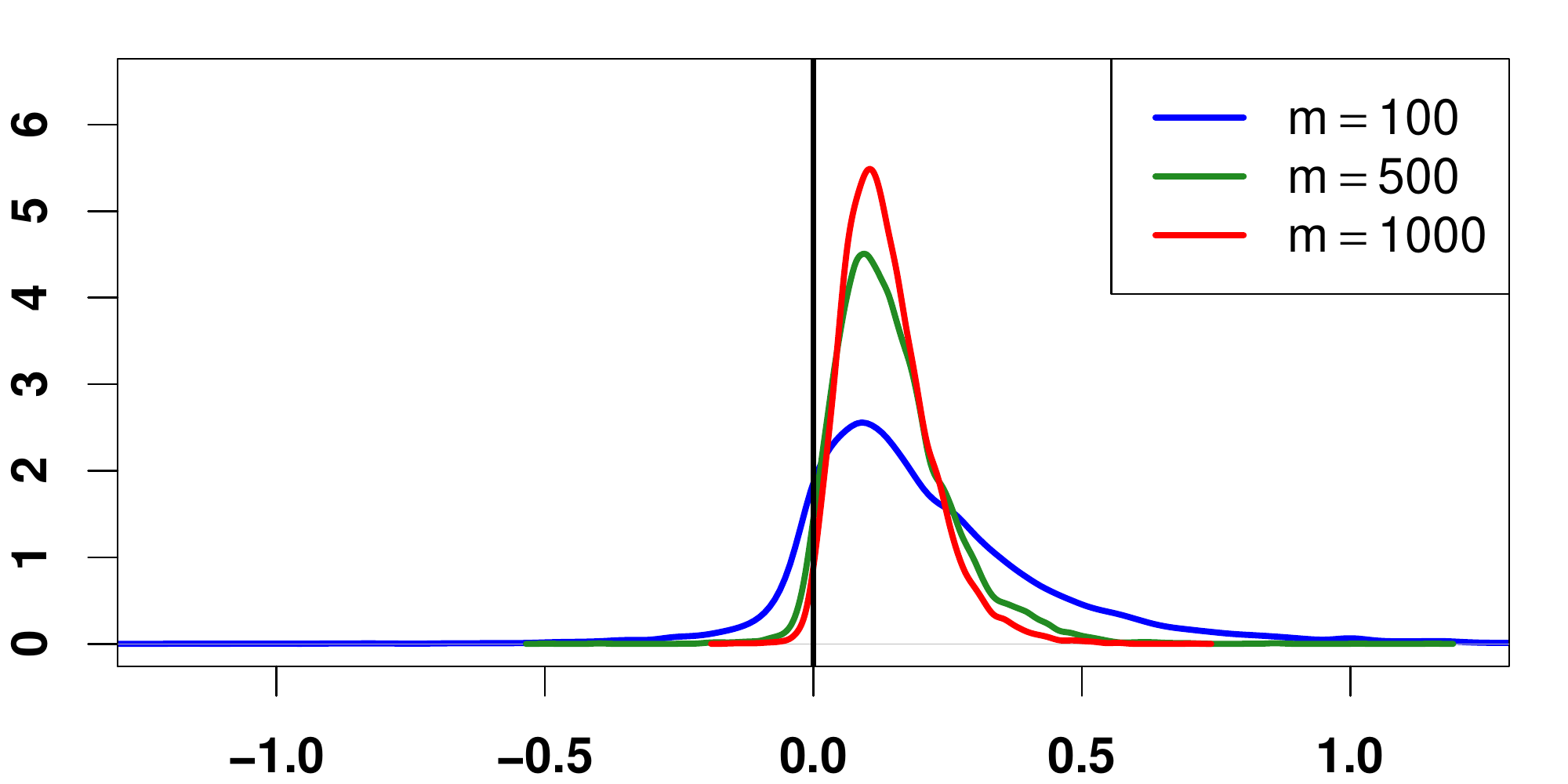}
		}
	\subfloat[$N(0,1)$, $\beta=0.75$]{
		\includegraphics[width=0.32\textwidth]{./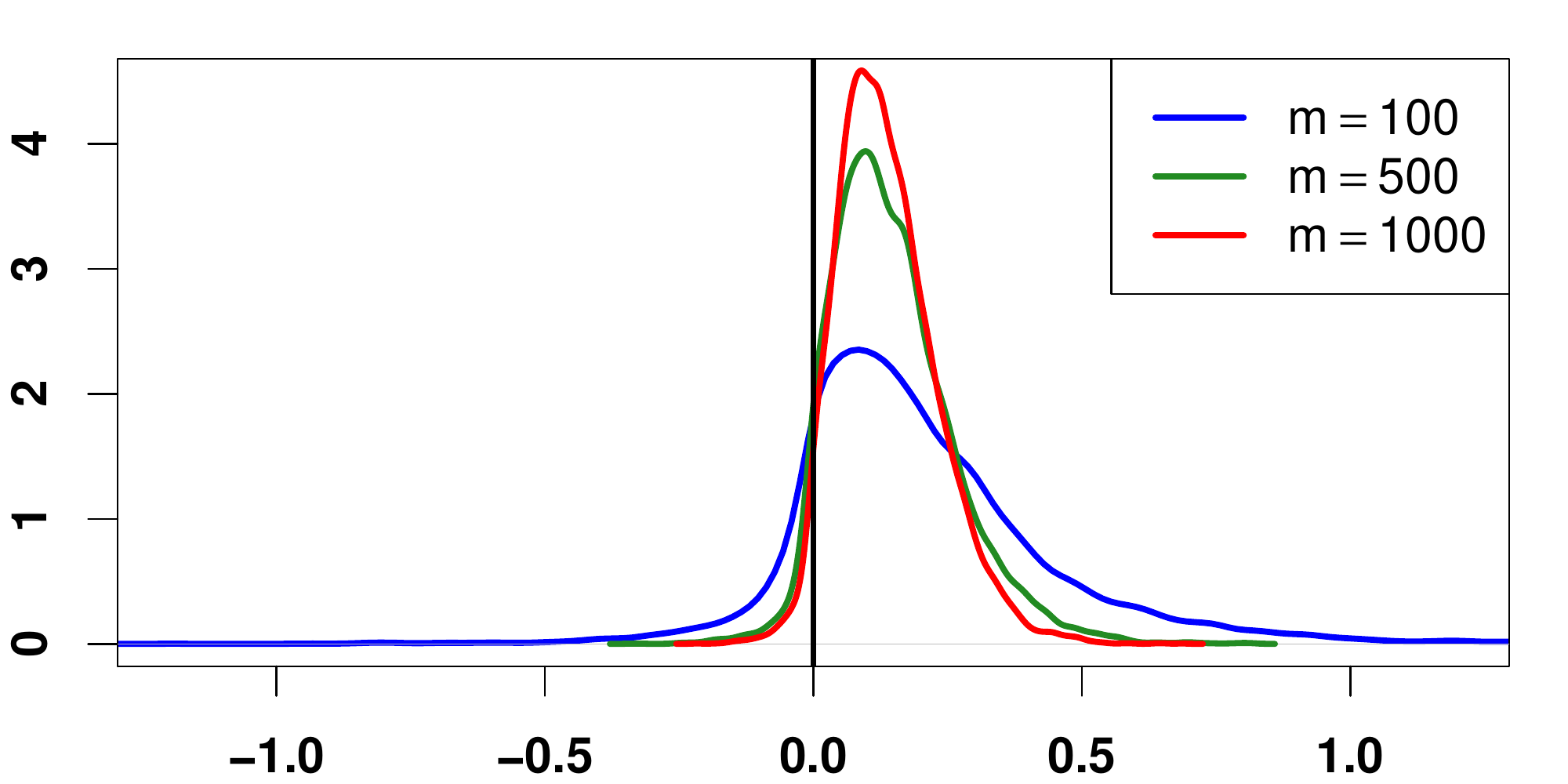}
	}
	\subfloat[$N(0,1)$, $\beta=1.4$]{
		\includegraphics[width=0.32\textwidth]{./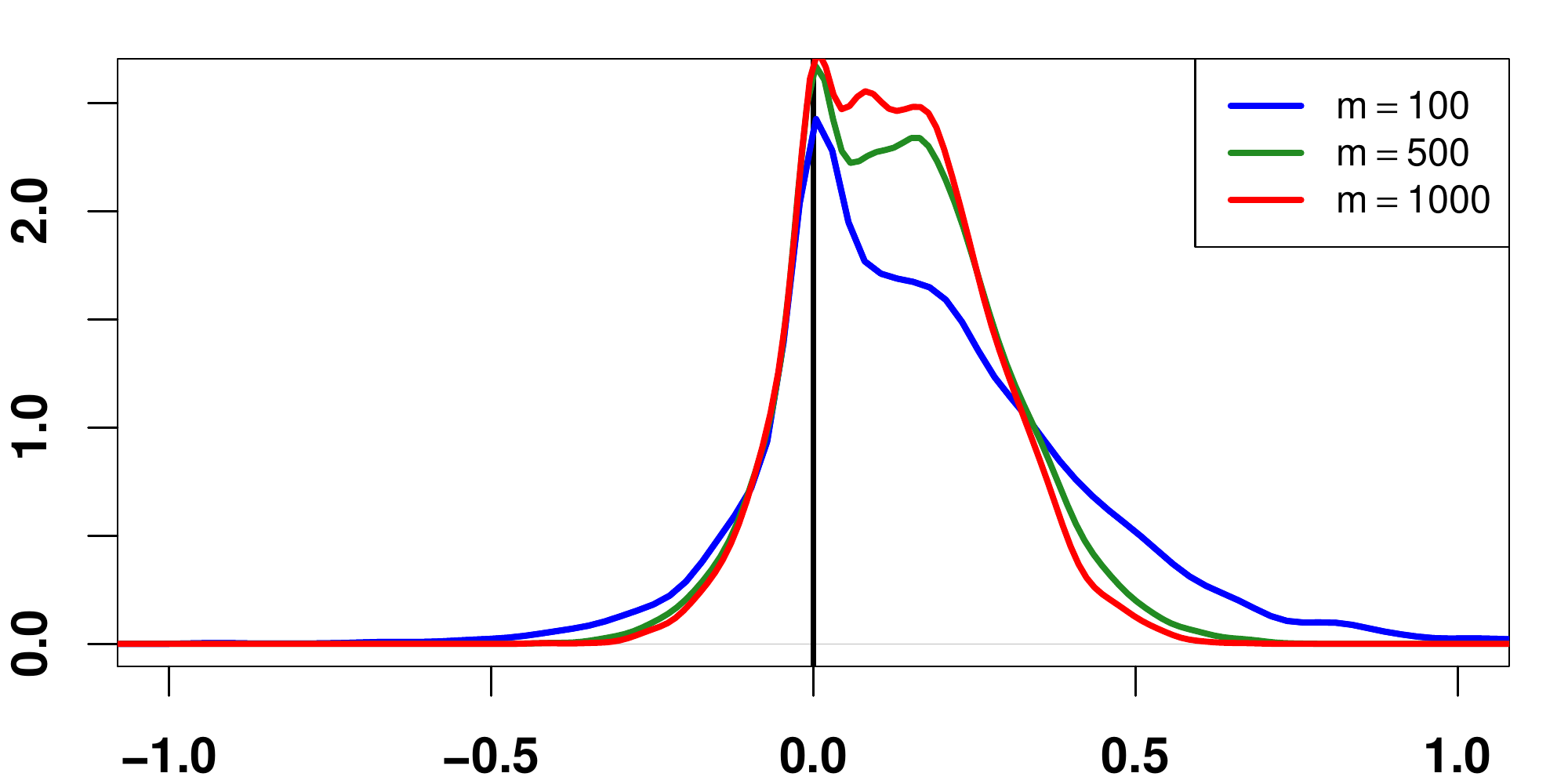}
	}\\
	\subfloat[$t(3)$, $\beta=0.25$]{
		\includegraphics[width=0.32\textwidth]{./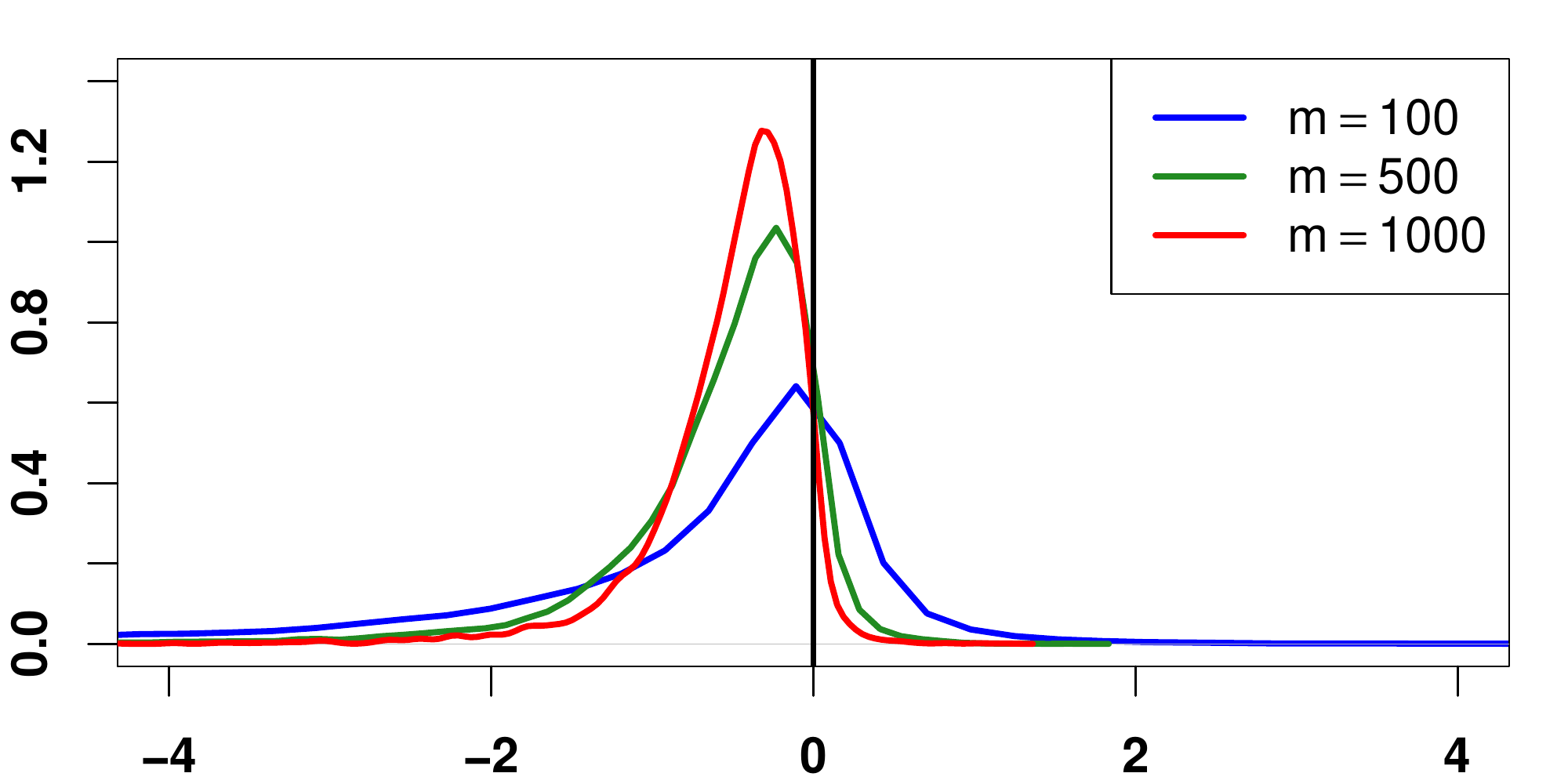}
		}
	\subfloat[$t(3)$, $\beta=0.75$]{
		\includegraphics[width=0.32\textwidth]{./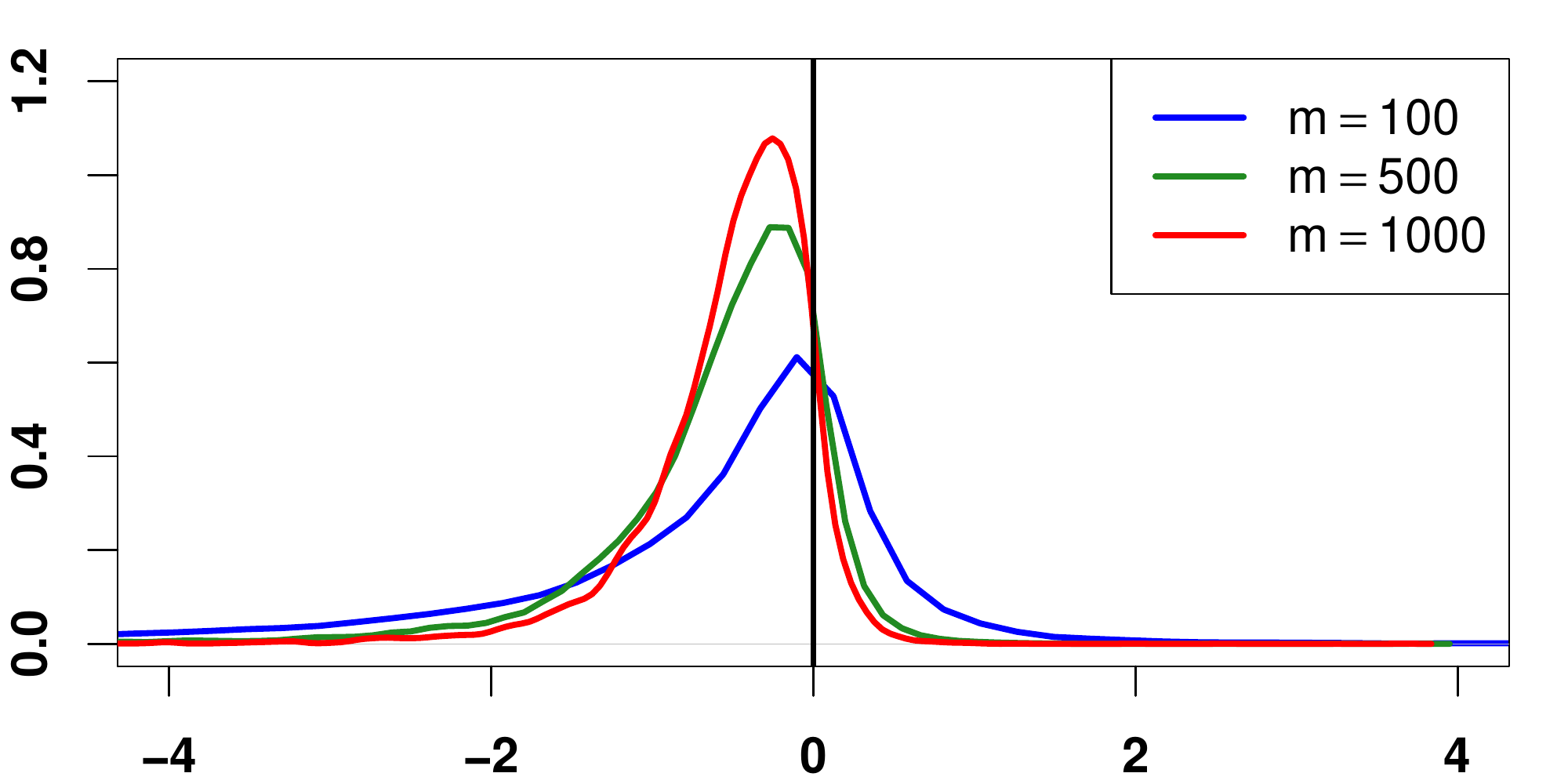}
	}
	\subfloat[$t(3)$, $\beta=1.4$]{
		\includegraphics[width=0.32\textwidth]{./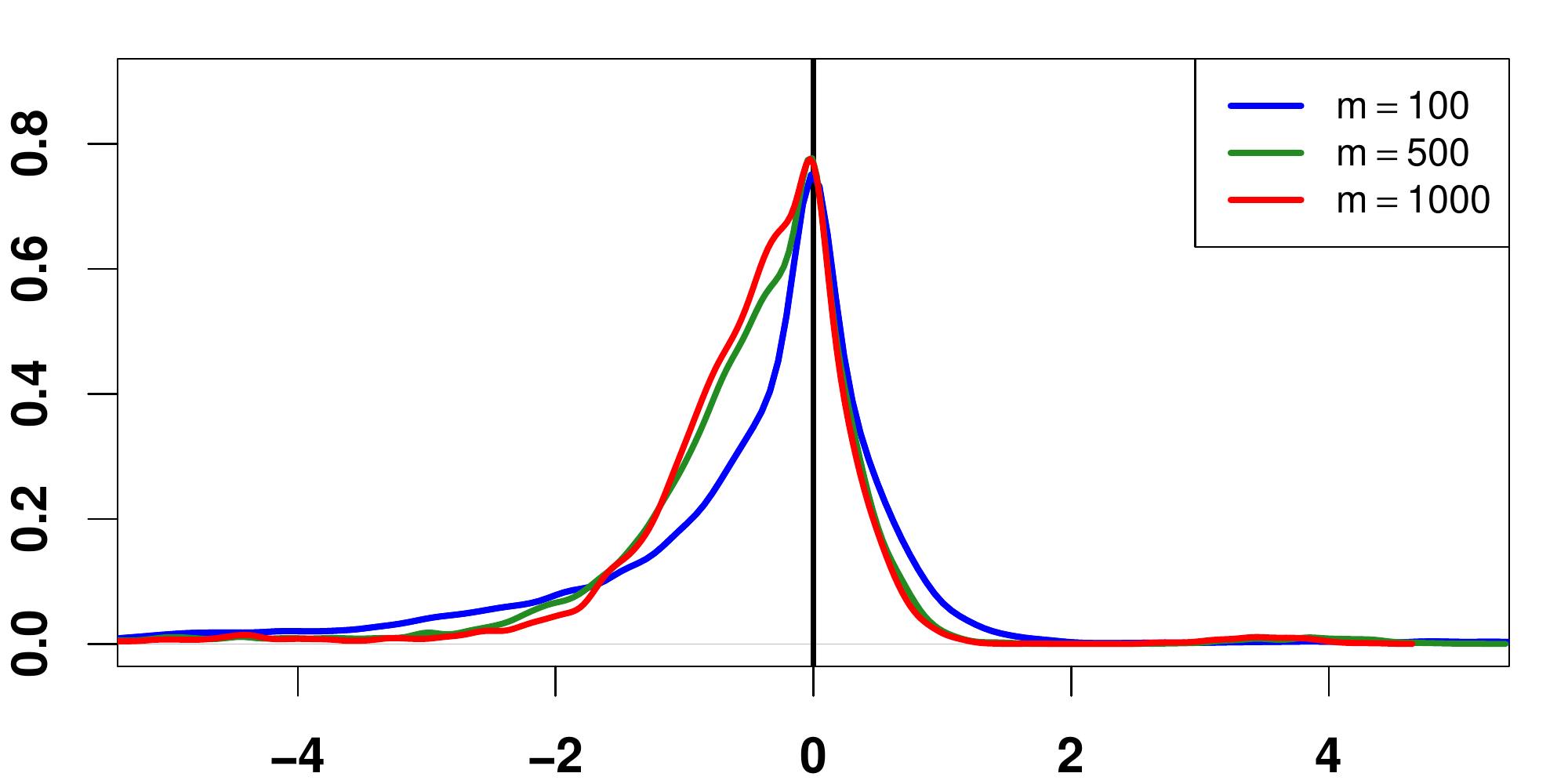}
	}

		\caption{Estimated densities 
			 of the observed delay times scaled by $\tilde{c}_m \sqrt{m}\left( 1+\frac{k^*}{m} \right)$ for AR(1) errors with parameter $a=0.2$ and normal innovations (top row) as well as $t(3)$-innvoations (bottom row). The dashed line indicates the theoretical value from Table \ref{scalediff}, while the solid line indicates where 0 lies. 		}
		\label{figure_new_WD2}
\end{figure}

\section{Conclusions}\label{section_conclusions}
In this paper we have derived the limit distribution of the stopping time of a sequential change point procedure based on $U$-statistics. While, previously,  only results for changes that occur early in the monitoring period were obtained,  we also derive such results for late changes. In the case of late changes there are two fundamental differences to the early change situation:

First, there is a positive probability of a false alarm before the change has actually occured if constant critical values are used, while this probability is asymptotically negligible for early change points. Such constant critical values are used for sequential testing as they allow to control the asymptotic false alarm rate at a fixed level. Consequently, it is not surprising that for later changes there is also a positive probability of such an alarm before the change point occurs. Early changes on the other hand occur  by definition asymptotically at the very beginning of the monitoring period such that there has been no time for a false alarm yet. 

Secondly, for late changes the stopping time depends on the historic sample, while this dependence is asymptotically negligible for early changes. By conditioning on the relevant quantities of the historic data set we first derive asymptotic results for the stopping time in all situations. From these we obtain unconditional results as well, where for late changes the expected delay time depends on the historic data set while this is not true for early changes. As a contrast the asymptotic variability of the delay time does not depend on the historic data for early or late changes.

As a side product we obtain a better approximation for early changes as compared to previous results by including an additional factor that is necessary for late changes but asymptotically negligible for early changes. Nevertheless,  taking it into account results in a much better small sample approximation  for early changes as shown in simulations, effectively removing the strong bias that has been reported in previous simulations.

Furthermore, we have derived the stopping times not only for the standard DOM monitoring procedure but also for a more robust Wilcoxon procedure.
Based on these results we  theoretically compare the stopping times of both methods revealing that the Wilcoxon procedure is significantly quicker for heavy-tailed distributions while only being somewhat slower for the normal distribution, which has also been confirmed in simulations.

In future work, the same methodology can also be applied to compare stopping times for different weight functions and different types of monitoring schemes such as Page-CUSUM~(see e.g.~\cite{Fremdt}), modified MOSUM (see e.g.~\cite{kirch2018modified}) or the monitoring scheme of \cite{gosmann2019new}. So far such a comparison was only possible for early changes and has only been done for different weight functions as well as the Page- versus classical CUSUM monitoring scheme.

%
%
%
%
\section{Proofs}\label{section_proofs}
\subsection{Proofs of Section~\ref{sec_thre}}
\begin{proof}[Proof of Theorem~\ref{as.H0}]
	With the given weight function $w(m,k)$, i.e.\ $\gamma=0$, the proof of Theorem~1 in ~\cite{seqcp} works with the choice $\tau=0$ (which is not the case for $0<\gamma<\frac 1 2$). Hence, the additional assumption given there to control the growth at the beginning of the monitoring is not necessary here. Corollary 2 in \cite{seqcp} then concludes the proof.
\end{proof}
\begin{proof}[Proof of Proposition \ref{cinf}]
	For $k\leq k^*$  we obtain by \eqref{GammaH0}, $k^*=\lfloor \lambda m^{\beta}\rfloor$,
 Assumption \ref{regass}  and Lemma 2 in \cite{seqcp} 
\begin{align*}
	&\sup_{1\leq k\leq k^*}\frac{\left|{\Gamma}(m,k)\right|}{\sqrt{m}\left(1+\frac km\right)}\leq \frac{\sqrt{\frac{k^*}{m}}}{1+\frac{k^*}{m}}\sup_{1\leq k\leq k^*}\frac{1}{\sqrt{k^*}}\left|\sum_{j=m+1}^{m+k}h_2(Y_j)\right|+ \frac{\frac{k^*}{m}}{1+\frac{k^*}{m}}|S_{1,m}|+o_P(1)\\
&=o_P(1)+\frac{\lambda}{m^{1-\beta}+\lambda}|S_{1,m}|.
\end{align*}
By \eqref{eq_lower_cm} it holds $1/c_m=O(1)$ such that the assertion follows by \eqref{cms}.
\end{proof}
\subsection{Proof of Theorem~\ref{thm:cond:version}}
As discussed in Section~\ref{sec:derivation} the proof of Theorem~\ref{thm:cond:version} is complete as soon as we  have proven Proposition~\ref{delta.lim2}, which is achieved in this section.

Some of the findings and direct consequences of the derivation of the expected asymptotic delay time and its variability of Section~\ref{sec:derivation}  are summarized in the following lemma:
\begin{lemma}\label{det.ass.gen}
	Under Assumption~\ref{change.ass}, \eqref{eq_lower_cm} and \eqref{eq_s_m} it holds
\begin{itemize}
	\item[a)] $		\frac{k^*}{N}=\frac{k^*}{a_m}(1+o(1))\to 
	\begin{cases}
		0, &\phantom{\text{if }m^{\beta -1}\,\frac{\sqrt{m}|\Delta_m|}{c_m}
	\to \;} 0,\\
	\frac{\lambda D}{1+\lambda D},&\text{if }	m^{\beta -1}\,\frac{\sqrt{m}|\Delta_m|}{c_m}\to D\in (0,\infty),\\
		1,&\phantom{\text{if }m^{\beta -1}\,\frac{\sqrt{m}|\Delta_m|}{c_m}\to \;}\infty.
	\end{cases}
$
\item[b)]$\frac{a_m-k^*}{m+k*}\to 0$, $\frac{N-k^*}{m+k*}\to 0$.
\item[c)]  $\frac{m+a_m}{m+k^*}\to 1$, $\frac{m+N}{m+k^*}\to 1$.
\item[d)]$\lim_{m\to\infty}\frac{(N-k^*)|\Delta_m|}{\sqrt{N}}\to\infty$, showing in particular, that $\sqrt{N-k^*}|\Delta_m|\to \infty$ and $N\to\infty$.
\item[e)] $\liminf_{m\to\infty} \frac{k^*\,s_{1,m}\sign(\Delta_m)}{\sqrt{m}\,(N-k^*)\,|\Delta_m|}>-\frac{1}{2}$ and $\limsup_{m\to\infty}\frac{k^*\,s_{1,m}\sign(\Delta_m)}{\sqrt{m}\,(N-k^*)\,|\Delta_m|}<\infty$.
\end{itemize}
\end{lemma}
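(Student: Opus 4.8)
The plan is to read all five items off the explicit formula \eqref{defam2} for $a_m$, the relation $b_m=\sqrt{a_m}/|\Delta_m|$ from \eqref{defbm} and the definition $N=a_m+xb_m$ from \eqref{defN}, combined with three facts already established in Section~\ref{sec:derivation}: the order estimate \eqref{eq:am:approx}, the relation $b_m/a_m\to0$ from \eqref{eq_bmam}, and $a_m\Delta_m^2\ge\frac12 c_m\sqrt m|\Delta_m|\to\infty$ from \eqref{ineq:amdelta}. Two preliminary reductions organise the argument. First, since $N=a_m(1+x\,b_m/a_m)$ with $b_m/a_m\to0$, we get $N/a_m\to1$ and hence $\sqrt N/\sqrt{a_m}\to1$; this already supplies the factor $(1+o(1))$ in a) and lets me replace $N$ by $a_m$ in every statement up to $1+o(1)$. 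Second, subtracting $k^*$ in \eqref{defam2} and simplifying gives the workhorse identity
\begin{align*}
	(a_m-k^*)\,|\Delta_m|=\frac{c_m\,(k^*+m)}{\sqrt m\,(1-p_m+q_m)}\,(1-\rho_m),\qquad \rho_m=\sign(\Delta_m)\,\frac{s_{1,m}}{c_m}\,\frac{k^*}{k^*+m},
\end{align*}
with $p_m=c_m/(\sqrt m|\Delta_m|)\to0$ and $q_m=\sign(\Delta_m)\,s^*_{1,m}/(\sqrt m|\Delta_m|)=O(1)$ by \eqref{eq_s_m}; here $1-p_m+q_m>0$ and stays bounded, and the elementary identity $\frac{k^*}{k^*+m}=\frac{\lambda}{m^{1-\beta}+\lambda}(1+o(1))$ turns the first line of \eqref{eq_s_m} into $\limsup_m|\rho_m|=:L<1$.

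For a) I insert \eqref{eq:am:approx} and set $t_m:=k^*|\Delta_m|/(c_m\sqrt m)$, so that $k^*/a_m=\frac{t_m}{1+t_m}(1+o(1))$; writing $k^*=\lfloor\lambda m^\beta\rfloor$ gives $t_m=\lambda\,m^{\beta-1}\frac{\sqrt m|\Delta_m|}{c_m}(1+o(1))$, and the three cases of $R_m=m^{\beta-1}\sqrt m|\Delta_m|/c_m$ yield the limits $0$, $\lambda D/(1+\lambda D)$, $1$. For b) the workhorse identity gives directly
\begin{align*}
	\frac{a_m-k^*}{m+k^*}=\frac{c_m\,(1-\rho_m)}{\sqrt m\,|\Delta_m|\,(1-p_m+q_m)}\longrightarrow0
\end{align*}
by Assumption~\ref{change.ass}(ii), since $1-\rho_m$ and $(1-p_m+q_m)^{-1}$ are bounded; the analogous statement for $N$ follows after adding $x\,b_m/(m+k^*)=(b_m/a_m)\cdot a_m/(m+k^*)\to0$. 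Part c) is then immediate from $\frac{m+a_m}{m+k^*}=1+\frac{a_m-k^*}{m+k^*}$ and the corresponding expression for $N$.

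For d) I write $(N-k^*)|\Delta_m|=(a_m-k^*)|\Delta_m|+x\sqrt{a_m}$. Using \eqref{ineq:amdelta} together with $L<1$ one checks that $\sqrt{a_m}=o\!\big(c_m(k^*+m)/\sqrt m\big)$ (comparing the two possible dominant parts $\sqrt{k^*}$ and $\sqrt{c_m\sqrt m/|\Delta_m|}$ of $\sqrt{a_m}$ against the bound, and using the growth of $c_m$ for linear and of a positive power of $m$ for superlinear changes), so the $x\sqrt{a_m}$ term is negligible and $(N-k^*)|\Delta_m|\sim\frac{c_m(k^*+m)(1-\rho_m)}{\sqrt m(1-p_m+q_m)}\gtrsim c_m(k^*+m)/\sqrt m$. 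Dividing by $\sqrt N\sim\sqrt{a_m}$ and invoking $\sqrt{a_m}=o\!\big(c_m(k^*+m)/\sqrt m\big)$ again forces the ratio to diverge, which is d). The two ``in particular'' claims follow, since $\sqrt{N-k^*}\,|\Delta_m|\ge(N-k^*)|\Delta_m|/\sqrt N\to\infty$ and, as $|\Delta_m|=O(1)$ by Assumption~\ref{change.ass}(i), $N\ge N-k^*\to\infty$.

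Part e) is the delicate step and the main obstacle. Combining the workhorse identity with $N-k^*\sim a_m-k^*$ and the already-established negligibility of $x\sqrt{a_m}$, the ratio in e) equals $(1-p_m+q_m)\,\frac{\rho_m}{1-\rho_m}\,(1+o(1))$. Since $\rho\mapsto\rho/(1-\rho)$ is increasing on $(-\infty,1)$ and $\limsup_m|\rho_m|=L<1$, the factor $\frac{\rho_m}{1-\rho_m}$ is bounded above (giving $\limsup<\infty$) and bounded below by $\frac{-L}{1+L}>-\frac12$; this is precisely where the first line of \eqref{eq_s_m} is indispensable and where the sharp constant $-\frac12$ originates. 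The genuine difficulty is to control the bounded correction factor $1-p_m+q_m$ — the only place where $s^*_{1,m}$ survives into the ratio — and to verify it does not erode the strict inequality; this uses the smallness of the historic contribution that already underlies \eqref{eq:am:approx}, whence $1-p_m+q_m\to1$ and the sharp value $-\frac12$ is retained. The routine remainders ($x\sqrt{a_m}$ and the self-referential $\rho_m$ in the denominator) are checked last, being controlled by $\sqrt{a_m}=o\!\big(c_m(k^*+m)/\sqrt m\big)$ and $L<1$.
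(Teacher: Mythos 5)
Your proposal is correct and takes essentially the same route as the paper: your workhorse identity is just the exact form of the paper's \eqref{eq_lem_gneu}, parts a)--c) are read off \eqref{eq:am:approx} exactly as in the paper, your case analysis in d) (growth of $c_m$ for linear, of a power of $m$ for superlinear, $a_m=o(m)$ for sublinear changes) mirrors the paper's bound $(m+N)/\sqrt{mN}\ge\sqrt{\max(m/N,N/m)}$, and e) is the same computation yielding $\rho_m/(1-\rho_m)$ with $\limsup_{m\to\infty}|\rho_m|<1$ from the first line of \eqref{eq_s_m}. The one caveat you rightly flag --- that $1-p_m+q_m\to 1$ requires $s^*_{1,m}=o(\sqrt{m}|\Delta_m|)$ rather than the $O(1)$ literally stated in \eqref{eq_s_m} --- is shared by the paper, whose \eqref{eq:am:approx} and the $o(1)$ in \eqref{eq_lem_gneu} implicitly use the same strengthening (which does hold in all applications via Assumptions~\ref{ass_neu_stoch} and~\ref{ass_as_S}), so this is not a gap relative to the paper's own argument.
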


\begin{proof}
By \eqref{defN} and \eqref{eq_bmam}, it holds
\begin{align}\label{eq_nam}
	\frac{N}{a_m}\to 1,
\end{align}
such that by 
\eqref{eq:am:approx}
it holds
\begin{align*}
	N=k^* \left( 1+\frac{c_m}{\sqrt{m}|\Delta_m|}\,\frac{m}{k^*} \right)(1+o(1)),
\end{align*}
from which (a) follows. 
By \eqref{eq:am:approx} it holds \begin{align*}
	a_m-k^*=o(k^*)+m\, O\left(\frac{c_m}{\sqrt{m}\,|\Delta_m|}\right)=o(k^*+m),
	\end{align*}
	showing the first assertion in b).
	By \eqref{eq_nam} and \eqref{ineq:amdelta} it follows
	\begin{align*}
		\frac{N-k^*}{m+k^*}=\frac{N}{a_m}\,\frac{a_m-k^*}{m+k^*} + \left( \frac{N}{a_m}-1 \right)\, \frac{k^*}{m+k^*}=o(1).
	\end{align*}
	Assertion c) is a direct consequence of b).

	Finally, by \eqref{defN}, \eqref{defam2} and \eqref{defbm2} it holds
	\begin{align}
		&\sqrt{m}\,|\Delta_m|\,(N-k^*)=\sqrt{m\,a_m}\,x+\sqrt{m}\,(a_m-k^*)\,|\Delta_m|\notag\\*
&=\sqrt{m\,a_m}\,x+k^*(s_{1,m}^*-s_{1,m})\,\sign(\Delta_m)+c_mm+a_m\left(c_m-s_{1,m}^*\sign(\Delta_m)\right)\notag\\
&=c_m(m+a_m)\left(\frac{\sqrt{m\,a_m}}{(m+a_m)\,c_m}\,x+ 1-\frac{s_{1,m}}{c_m}\frac{k^*\sign(\Delta_m)}{m+a_m}-\frac{s_{1,m}^*}{c_m} \,\frac{a_m-k^*}{m+a_m}\sign(\Delta_m)\right)\notag\\
&=c_m(m+a_m)\left( 1-\frac{s_{1,m}}{c_m}\frac{\lambda \sign(\Delta_m)}{m^{1-\beta}+\lambda}+o(1) \right),\label{eq_lem_gneu}
\end{align}
where in the last line for linear changes $c_m\to\infty$ was used and for early changes that in this case $a_m/m=o(1)$ by \eqref{eq:am:approx}.

Assertion d) follows from this and c) because $(m+N)/\sqrt{m\,N}\ge \sqrt{\max(m/N,N/m)}$. The latter converges to infinity for superlinear changes as in that case $N/m\to \infty$. It also converges to infinity for sublinear (early) changes as in that case $m/N\to 0$ by \eqref{eq_nam} and \eqref{eq:am:approx}.  For linear changes it is bounded (from above and below), but $c_m\to \infty$ so that the assertion also follows.

Furthermore, it holds by c) and \eqref{eq_lem_gneu}
\begin{align*}
	&\frac{k^*\,s_{1,m}\sign(\Delta_m)}{\sqrt{m}\,(N-k^*)\,|\Delta_m|} =\frac{\frac{s_{1,m}}{c_m}\frac{\lambda\sign(\Delta_m)}{m^{1-\beta}+\lambda}+o(1)}{1-\frac{s_{1,m}}{c_m}\frac{\lambda\sign(\Delta_m)}{m^{1-\beta}+\lambda}+o(1)},
\end{align*}
such that assertion e) follows by \eqref{eq_s_m}.
\end{proof}

\begin{lemma}\label{delta.stop2}
	Under the assumptions of Theorem~\ref{thm:cond:version} it holds for any $\delta\in(0,1)$, $z\in\mathbb{R}$ fixed,  as $m\rightarrow\infty,$
\begin{align*}
	&P\left(\sup_{1\leq l<N-k^*}\left.\frac{\left|\widetilde{\Gamma}(m,k^*+l)\right|}{\sqrt{m}\left(1+\frac{k^*+l}{m}\right)}-e_m\leq \sqrt{v_m} \,z\right|S_{1,m}=s_{1,m},S^*_{1,m}=s_{1,m}^*\right)\\
	&=P\left(\sup_{(1-\delta)(N-k^*)\leq l<N-k^*}\left.\frac{\left|\widetilde{\Gamma}(m,k^*+l)\right|}{\sqrt{m}\left(1+\frac{k^*+l}{m}\right)}-e_m\leq \sqrt{v_m}\,z\right|S_{1,m}=s_{1,m},S^*_{1,m}=s_{1,m}^*\right)+o(1).
\end{align*}
\end{lemma}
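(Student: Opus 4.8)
The plan is to show that only the top of the index range matters: the supremum over the ``low'' block $1\le l<(1-\delta)(N-k^*)$ stays below the threshold $\sqrt{v_m}\,z$ with conditional probability tending to one. Write $P^*(\cdot)=P(\cdot\mid S_{1,m}=s_{1,m},S^*_{1,m}=s^*_{1,m})$. The supremum over $1\le l<N-k^*$ is the maximum of the supremum over the low block and the supremum over the top block $(1-\delta)(N-k^*)\le l<N-k^*$, so the event that the full supremum is $\le\sqrt{v_m}z$ is contained in the event that the top supremum is $\le\sqrt{v_m}z$. Hence the two conditional probabilities in the assertion differ by at most $P^*$ of the event that the low-block supremum exceeds $\sqrt{v_m}z$, and it suffices to prove that this probability vanishes.

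For $l\ge1$ decompose $\widetilde{\Gamma}(m,k^*+l)=\mu(l)+\Xi(l)$ via \eqref{GammaH1} and the conditional identity \eqref{eq_condprob}, where the conditionally deterministic mean part is $\mu(l)=\tfrac{k^*}{\sqrt m}s_{1,m}+\tfrac{l}{\sqrt m}s^*_{1,m}+l\Delta_m$ and the conditionally random part is $\Xi(l)=\sqrt{k^*}\,S_{2,m}+\sum_{j=m+k^*+1}^{m+k^*+l}h^*_{2,m}(Z_{j,m})$. By the triangle inequality and monotonicity of $w(m,\cdot)$,
\[
\sup_{1\le l<(1-\delta)(N-k^*)}\Big(w(m,k^*+l)\,\big|\widetilde{\Gamma}(m,k^*+l)\big|-e_m\Big)\le \underbrace{\sup_{1\le l<(1-\delta)(N-k^*)}\big(w(m,k^*+l)\,|\mu(l)|-e_m\big)}_{=:A_m}+\underbrace{w(m,k^*)\sup_{1\le l<N-k^*}|\Xi(l)|}_{=:B_m}.
\]
It then remains to show $A_m\le-C_m\sqrt{v_m}$ with $C_m\to\infty$ while $B_m=O_P(\sqrt{v_m})$.

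For the deterministic drift $A_m$: on the stretch where $\sign\mu(l)=\sign(\Delta_m)$ the map $l\mapsto w(m,k^*+l)|\mu(l)|$ is increasing for $m$ large (the historic coefficient $\tfrac{k^*}{\sqrt m}s_{1,m}$ is controlled through Lemma~\ref{det.ass.gen}(e), and $\tfrac{|s_{1,m}|}{\sqrt m|\Delta_m|}\to0$, $\tfrac{|s^*_{1,m}|}{\sqrt m|\Delta_m|}=O(1)$ by \eqref{eq_s_m}), while on the initial stretch where $\mu$ changes sign $|\mu(l)|$ is negligibly small; its supremum over the low block is therefore attained near $l=(1-\delta)(N-k^*)$. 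Since $e_m=w(m,N)|\mu(N-k^*)|$ by \eqref{eq:em}, expanding and noting that the weight difference is negligible by Lemma~\ref{det.ass.gen}(b),(c) yields $A_m\le-\delta\,(N-k^*)\,|\Delta_m|\,w(m,N)\,(1+o(1))$. By \eqref{eq:vm} this gives
\[
\frac{A_m}{\sqrt{v_m}}\le-\frac{\delta\,(N-k^*)\,|\Delta_m|}{\sqrt N\,\sigma_\infty}\,(1+o(1))\longrightarrow-\infty,
\]
using Lemma~\ref{det.ass.gen}(d), so indeed $A_m\le-C_m\sqrt{v_m}$ with $C_m\to\infty$.

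The main obstacle is the uniform control of the stochastic term $B_m$. Its first piece $w(m,k^*)\sqrt{k^*}|S_{2,m}|$ is $O_P(\sqrt{v_m})$, since $S_{2,m}=O_P(1)$ and $w(m,k^*)\sqrt{k^*}/\sqrt{v_m}=\sigma_\infty^{-1}\sqrt{k^*/N}\,(m+N)/(m+k^*)=O(1)$ by Lemma~\ref{det.ass.gen}(a),(c). For the partial-sum piece one needs a maximal inequality giving $\sup_{1\le l<N-k^*}\big|\sum_{j=m+k^*+1}^{m+k^*+l}h^*_{2,m}(Z_{j,m})\big|=O_P(\sqrt N)$, whence $w(m,k^*)\,O_P(\sqrt N)=O_P(\sqrt{v_m})$ again by Lemma~\ref{det.ass.gen}(c). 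This is precisely where the regimes must be separated: in the ``later'' case (Assumption~\ref{ass.stop}(c), where $\sigma_\infty=\sigma$) the bound is Assumption~\ref{ass.stop}(c)(ii) with $k_m=N$, whereas in the ``very early'' case (Assumption~\ref{ass.stop}(b), where $\sigma_\infty$ still involves $\sigma^*$ and typically $N=o(m)$) the Hajek-Renyi scaling $m^{1/2-\alpha}l^\alpha$ of Assumption~\ref{ass.stop}(a)(ii) is too lossy, and one must instead invoke the functional central limit theorem of Assumption~\ref{ass.stop}(b) together with the continuous mapping theorem (the rescaled supremum converging to $\sup_{0\le t\le1}|W^*(t)|$). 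Combining both bounds, the low-block supremum is at most $(-C_m+O_P(1))\sqrt{v_m}$, so its $P^*$-probability of exceeding $\sqrt{v_m}z$ tends to zero, which completes the reduction.
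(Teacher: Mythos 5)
Your proposal is correct and follows essentially the same route as the paper's own proof: the same containment reduction (it suffices that the low-block supremum exceeds the threshold with vanishing conditional probability), the same uniform disposal of the stochastic part at the $O_P(\sqrt{v_m})$ scale with exactly the paper's case split (Assumption~\ref{ass.stop}(c)(ii) with $k_m=N$ in the later regime, the functional central limit theorem of Assumption~\ref{ass.stop}(b) plus continuous mapping in the very early regime, where you rightly observe the H\'ajek--R\'enyi bound of (a)(ii) is too lossy), and the same drift-gap estimate of order $(N-k^*)|\Delta_m|/\sqrt{N}\to\infty$ via Lemma~\ref{det.ass.gen}(d),(e). One minor imprecision: when $s_{1,m}\sign(\Delta_m)$ is sufficiently negative, the supremum of $w(m,k^*+l)|\mu(l)|$ over the low block is attained at $l=1$ rather than near $(1-\delta)(N-k^*)$, and the initial stretch is \emph{not} negligibly small but of the same order as the scale; the paper handles this by bounding the gap by $-\min\bigl(\delta,\,1+2\,\tfrac{k^*s_{1,m}\sign(\Delta_m)}{\sqrt{m}\,(N-k^*)\,|\Delta_m|}\bigr)$ times the diverging scale, so your constant $\delta$ in the bound for $A_m$ should be replaced by this minimum -- but since Lemma~\ref{det.ass.gen}(e) keeps the second argument bounded away from zero (precisely the fact you cite), your conclusion $A_m/\sqrt{v_m}\to-\infty$ and hence the lemma survive unchanged.
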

\begin{proof}
	By \eqref{eq_condprob} 
	it is sufficient to show that 
	\begin{align*}
		P&\left(\frac {m+N}{\sqrt{N}}\left(\sup_{1\leq l<(1-\delta)(N-k^*)}\frac{\left|\sum_{j=m+1}^{m+k^*}h_2(Y_j)+\sum_{j=m+k^*+1}^{m+k^*+l}h^*_{2,m}(Z_{j,m})+l\Delta_m+\frac{k^*}{\sqrt{m}}s_{1,m}+\frac{l}{\sqrt{m}}s_{1,m}^*\right|}{m+k^*+l}\notag\right.\right.\\*
		&\qquad\left.\left.-\frac{(N-k^*)|\Delta_m|+\frac{k^*}{\sqrt{m}}s_{1,m}\sign(\Delta_m)+\frac{N-k^*}{\sqrt{m}}s_{1,m}^*\sign(\Delta_m)}{m+N}\right)\leq z \sigma_{\infty} \right)\to 1.
	\end{align*}
	By Lemma~\ref{det.ass.gen} a), c) and d), \eqref{eq_s_m} and by Assumption \ref{ass.stop} b) resp.\ c)  it holds
	\begin{align*}
&	\frac {m+N}{\sqrt{N}}\sup_{1\leq l<(1-\delta)(N-k^*)}\frac{\left|\sum_{j=m+1}^{m+k^*}h_2(Y_j)+\sum_{j=m+k^*+1}^{m+k^*+l}h^*_{2,m}(Z_{j,m})+l\Delta_m+\frac{k^*}{\sqrt{m}}s_{1,m}+\frac{l}{\sqrt{m}}s_{1,m}^*\right|}{m+k^*+l}	\\
&= \frac {m+N}{\sqrt{N}}\sup_{1\leq l<(1-\delta)(N-k^*)}\frac{\left|l\Delta_m+\frac{k^*}{\sqrt{m}}s_{1,m}\right|}{m+k^*+l}	+o\left( \frac{(N-k^*)\,|\Delta_m|}{\sqrt{N}} \right)+O_P\left(1\right)\\
	&= \frac {(N-k^*)\,|\Delta_m|}{\sqrt{N}}\sup_{1\leq l<(1-\delta)(N-k^*)}\left|\frac{l}{N-k^*}+\frac{k^*\,s_{1,m}\sign(\Delta_m)}{\sqrt{m}\,(N-k^*)\,|\Delta_m|}\right|+o_P\left( \frac{(N-k^*)\,|\Delta_m|}{\sqrt{N}} \right).
	\end{align*}
Similarly,
\begin{align*}
	&\frac{1}{\sqrt{N}}\left((N-k^*)|\Delta_m|+\frac{k^*}{\sqrt{m}}s_{1,m}\sign(\Delta_m)+\frac{N-k^*}{\sqrt{m}}s_{1,m}^*\sign(\Delta_m)\right)\\
	&= \frac {(N-k^*)\,|\Delta_m|}{\sqrt{N}}\left( 1+ \frac{k^*\,s_{1,m}\sign(\Delta_m)}{\sqrt{m}\,(N-k^*)\,|\Delta_m|}\right)+o\left( \frac{(N-k^*)\,|\Delta_m|}{\sqrt{N}} \right).
\end{align*}
Furthermore,
\begin{align*}
&\sup_{1\leq l<(1-\delta)(N-k^*)}\left|\frac{l}{N-k^*}+\frac{k^*\,s_{1,m}\sign(\Delta_m)}{\sqrt{m}\,(N-k^*)\,|\Delta_m|}\right|
-\left( 1+ \frac{k^*\,s_{1,m}\sign(\Delta_m)}{\sqrt{m}\,(N-k^*)\,|\Delta_m|}\right)\\
&\le \max\left( 1-\delta +\frac{k^*\,s_{1,m}\sign(\Delta_m)}{\sqrt{m}\,(N-k^*)\,|\Delta_m|}, \frac{-k^*\,s_{1,m}\sign(\Delta_m)}{\sqrt{m}\,(N-k^*)\,|\Delta_m|} \right)
-\left( 1+ \frac{k^*\,s_{1,m}\sign(\Delta_m)}{\sqrt{m}\,(N-k^*)\,|\Delta_m|}\right)\\
&\le - \min\left( \delta, 1+2\,\frac{k^*\,s_{1,m}\sign(\Delta_m)}{\sqrt{m}\,(N-k^*)\,|\Delta_m|} \right),
\end{align*}
where the latter minimum is positive and bounded away from zero by Lemma~\ref{det.ass.gen} e).
Lemma~\ref{det.ass.gen} d) completes the proof.\end{proof}

\begin{proposition}\label{delta.lim2}
Let  $s_{1,m}, s_{1,m}^*\in\mathbb{R}$ with $P(S_{1,m}=s_{1,m},S^*_{1,m}=s_{1,m}^*)>0$. Under the assumptions of Theorem \ref{thm:cond} it holds for all $z\in\mathbb{R}$
\begin{align*}
\lim_{m\rightarrow\infty}P\left(v_m^{-1/2}\left(\sup_{k^*<k<N}\left.\frac{\left|\tilde{\Gamma}(m,k)\right|}{\sqrt{m}\left(1+\frac{k}{m}\right)}-e_m\right)
\leq z\right|S_{1,m}=s_{1,m},S^*_{1,m}=s_{1,m}^*\right)=\Phi\left(z\right)
\end{align*}
with $e_m$ and $v_m$ as in \eqref{eq:em} and \eqref{eq:vm}.
\end{proposition}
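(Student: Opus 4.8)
The plan is to derive the proposition from the localization already furnished by Lemma~\ref{delta.stop2} together with a central limit theorem at the single point $k=N$. Throughout I work under the conditional law $P^*(\cdot)=P(\cdot\mid S_{1,m}=s_{1,m},S^*_{1,m}=s^*_{1,m})$; by Assumption~\ref{ass_in} and \eqref{eq_condprob} this amounts to replacing $S_{1,m},S^*_{1,m}$ in $\widetilde\Gamma(m,k)$ by the deterministic values $s_{1,m},s^*_{1,m}$, while $S_{2,m}$ and $S^*_{2,m}(k)$ keep their unconditional distribution. Without loss of generality I take $\Delta_m>0$ (the opposite sign is symmetric). Writing $\mathrm{det}(k):=w(m,k)\big[(k-k^*)\Delta_m+\tfrac{k^*}{\sqrt m}s_{1,m}+\tfrac{k-k^*}{\sqrt m}s^*_{1,m}\big]$ for the conditioned, weighted signal and $T(m,k):=\sqrt{k^*}\,S_{2,m}+\sqrt{k-k^*}\,S^*_{2,m}(k)$ for the random monitoring part, one checks that $\mathrm{det}$ is strictly increasing in $k$ and of order $w(m,N)(N-k^*)|\Delta_m|$, which by Lemma~\ref{det.ass.gen}~d) dominates $\sqrt{v_m}$. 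Hence $w(m,k)\widetilde\Gamma(m,k)$ is positive on the relevant range with $P^*$-probability tending to one, so the absolute value may be dropped, and a direct comparison with \eqref{eq:em} shows that $e_m=\mathrm{det}(N)$ exactly.

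After invoking Lemma~\ref{delta.stop2} it remains to analyse $\sup_{(1-\delta)(N-k^*)\le l<N-k^*} w(m,k^*+l)\widetilde\Gamma(m,k^*+l)$. I would split this into its value at the right endpoint $k=N$ plus the excess over that endpoint. Setting $u=N-k^*-l\ge0$ and approximating the slowly varying weight by $w(m,N)$, the increment relative to the endpoint decomposes as
\[
\big[\mathrm{det}(N-u)-\mathrm{det}(N)\big]+\big[w(m,N-u)T(m,N-u)-w(m,N)T(m,N)\big]\approx w(m,N)\big(-|\Delta_m|\,u+W_u\big),
\]
where $W_u=T(m,N-u)-T(m,N)=-\sum_{j=m+N-u+1}^{m+N}h^*_{2,m}(Z_{j,m})$ is a centred partial sum with increment variance $\sigma^{*2}$ and I used that $\mathrm{det}$ has slope $\approx w(m,N)|\Delta_m|$ (via Lemma~\ref{det.ass.gen}~c)). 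Thus the excess of the supremum over its endpoint value equals $w(m,N)\sup_{u\ge0}\big(-|\Delta_m|u+W_u\big)$, the maximum of a random walk with negative drift $-|\Delta_m|$, which is $O_P(\sigma^{*2}/|\Delta_m|)$; this I would bound rigorously with the H\'ajek--R\'enyi-type maximal inequalities of Assumption~\ref{ass.stop}~(a)(ii) resp.\ (c)(ii) (the same inequalities also dispose of the weight- and slope-approximation errors, which are of smaller order since $m|\Delta_m|^2\to\infty$ by Assumption~\ref{change.ass}(ii)). Dividing by $\sqrt{v_m}=\sqrt N\,w(m,N)\,\sigma_\infty$ gives an excess of order $1/(|\Delta_m|\sqrt N\,\sigma_\infty)$, which vanishes because $|\Delta_m|\sqrt N\to\infty$ by Lemma~\ref{det.ass.gen}~d). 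Consequently the normalized supremum is asymptotically equal to its value at $k=N$.

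It then suffices to show that the endpoint value $w(m,N)\,T(m,N)/\sqrt{v_m}$ converges to the standard normal law. Its conditional variance equals $w^2(m,N)\big(k^*\sigma^2+(N-k^*)\sigma^{*2}\big)/v_m=\big(\tfrac{k^*}{N}\sigma^2+\tfrac{N-k^*}{N}\sigma^{*2}\big)/\sigma_\infty^2$, and by the limit of $k^*/N$ in Lemma~\ref{det.ass.gen}~a) this ratio tends to $1$ for precisely the value $\sigma_\infty^2$ in \eqref{def:sigmalim}. Asymptotic normality of the linear combination $\sqrt{k^*}S_{2,m}+\sqrt{N-k^*}S^*_{2,m}(N)$ (scaled by $\sqrt N$) follows, in the regimes where $k^*/N$ stays bounded away from $1$ (the ``$0$'' and ``$D$'' cases), from the bivariate functional CLT of Assumption~\ref{ass.stop}~(b), using that the two sums run over disjoint, consecutive index blocks so that the limiting variance is $\tfrac{k^*}{N}\sigma^2+\tfrac{N-k^*}{N}\sigma^{*2}$; in the regime $k^*/N\to1$ it follows from the marginal CLT of Assumption~\ref{ass.stop}~(c)(i) together with the bound (c)(ii), under which the $S^*_{2,m}(N)$ contribution is negligible. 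Combining the two steps yields $P^*(\,\cdot\le z\,)\to\Phi(z)$, which is the claim.

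I expect the second step to be the main obstacle: controlling the overshoot of the fluctuating post-change monitoring process above its endpoint value against the increasing deterministic signal. This needs a \emph{uniform} (maximal-inequality) handle on the partial sums of $h^*_{2,m}(Z_{j,m})$ rather than a pointwise CLT, and it is exactly here that the H\'ajek--R\'enyi assumptions and the divergence $|\Delta_m|\sqrt N\to\infty$ are indispensable. The same mechanism explains why the limit is Gaussian rather than of extreme-value type: the penalizing drift forces the supremum to collapse onto the single point $k\approx N$.
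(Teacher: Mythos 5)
Your overall architecture coincides with the paper's: localization to the window $(1-\delta)(N-k^*)<l\le N-k^*$ via Lemma~\ref{delta.stop2}; positivity of the drifted process on that window (so the absolute value can be replaced by $\sign(\Delta_m)$ times the expression) and monotonicity of the weighted deterministic part, which places its supremum at $l=\lfloor N-k^*\rfloor$ and identifies $e_m$ as the endpoint value; and the endpoint CLT for $\sqrt{k^*/N}\,S_{2,m}+\sqrt{(N-k^*)/N}\,S^*_{2,m}(N)$ with variance $\tfrac{k^*}{N}\sigma^2+\tfrac{N-k^*}{N}\sigma^{*2}\to\sigma_\infty^2$, handled through Assumption~\ref{ass.stop}(b) (disjoint blocks, independent increments of the bivariate Wiener process) in the first two regimes of \eqref{def:sigmalim} and through (c)(i)--(ii) when $k^*/N\to1$. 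All of this matches the paper's proof of Proposition~\ref{delta.lim2}. The genuine gap lies in the one step you yourself flag as the main obstacle: the overshoot of the supremum over the endpoint.

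Your claim that $\sup_{u\ge 0}\bigl(-|\Delta_m|u+W_u\bigr)=O_P\bigl(\sigma^{*2}/|\Delta_m|\bigr)$ can be established from the H\'ajek--R\'enyi inequalities of Assumption~\ref{ass.stop}(a)(ii) resp.\ (c)(ii) does not go through. The heavy-traffic bound for the maximum of a negatively drifted walk requires maximal control of the partial sums over \emph{short} windows of length $\asymp |\Delta_m|^{-2}$ ending at $m+N$, i.e.\ of the backward sums $W_u$ near $u=0$; but both stated inequalities are anchored at the \emph{left} end $j=m+k^*+1$ of the post-change block, so writing $W_u=S(N-k^*)-S(N-k^*-u)$ and bounding by twice the global supremum loses exactly the short-window gain your argument needs -- no sharpened increment control near the right endpoint is assumed anywhere. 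This is precisely why the paper does \emph{not} prove collapse onto $k=N$ at fixed $\delta$: it bounds the sup-minus-endpoint discrepancy by $\sqrt{\tfrac{N-k^*}{N}}\,O\bigl(\sup_{(1-\delta)(N-k^*)<l\le N-k^*}\bigl|\tfrac{1}{\sqrt{N-k^*}}(S(l)-S(N-k^*))\bigr|\bigr)$, kills it in the regime of Assumption (c) via the vanishing prefactor $\sqrt{(N-k^*)/N}\to 0$ together with (c)(ii), and in the regime of Assumption (b) shows it converges in distribution to $\sup_{1-\delta<s<1}|W^*(s)-W^*(1)|$, which is \emph{not} negligible for fixed $\delta$ and vanishes only in the subsequent limit $\delta\to 0$ by almost sure continuity of $W^*$ -- a two-stage limit that works because Lemma~\ref{delta.stop2} and all other error terms are established uniformly in $0\le\delta\le\tfrac12$. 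Your drift mechanism can be repaired in regime (b) by running it on the FCLT scale (at window fraction $s$ the normalized drift $\tfrac{(N-k^*)|\Delta_m|}{\sqrt{N}}\,\tfrac{s}{\sigma_\infty}\to\infty$ by Lemma~\ref{det.ass.gen}~d), so the limiting supremum does collapse), but that uses the functional CLT rather than the H\'ajek--R\'enyi inequalities; and in the purely marginal regime (c) the fine-scale drift argument has no support at all, so there you must fall back on the vanishing-prefactor argument, which your write-up omits.
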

\begin{proof}
	By  Assumption~\ref{ass.stop} (b) resp.\ (c) it holds for $(1-\delta)(N-k^*)<l\le N-k^*$ uniformly in $0\le \delta\le \frac{1}{2}$
\begin{align*}
	&\sign(\Delta_m)\left(\sum_{j=m+1}^{m+k^*}h_2(Y_j)+\sum_{j=m+k^*+1}^{m+k^*+l}h^*_{2,m}(Z_{j,m})+l\Delta_m+\frac{k^*}{\sqrt{m}}s_{1,m}+\frac{l}{\sqrt{m}}s_{1,m}^*\right)\\
	&\ge |\Delta_m|(N-k^*)\left( 1-\delta + \frac{k^*s_{1,m}\sign(\Delta_m)}{\sqrt{m}|\Delta_m|(N-k^*)}+ O_P\left(\frac{1}{|\Delta_m|\sqrt{(N-k^*)}}\right)+O\left( \frac{|s_{1,m}^*|}{\sqrt{m}|\Delta_m|} \right)\right)>0,
\end{align*}
where the positivity holds for $m$ large enough by \eqref{eq_s_m} and Lemma~\ref{det.ass.gen} d) and e).
Consequently, uniformly in $0\le \delta\le \frac 1 2$, it holds for $m$ large enough uniformly in $l$
\begin{align*}
	&\sup_{(1-\delta)(N-k^*)<l\le N-k^*}\frac{\left|\sum_{j=m+1}^{m+k^*}h_2(Y_j)+\sum_{j=m+k^*+1}^{m+k^*+l}h^*_{2,m}(Z_{j,m})+l\Delta_m+\frac{k^*}{\sqrt{m}}s_{1,m}+\frac{l}{\sqrt{m}}s_{1,m}^*\right|}{m+k^*+l}\\
	&=\sup_{(1-\delta)(N-k^*)<l\le N-k^*}\sign(\Delta_m)\frac{\left(\sum_{j=m+1}^{m+k^*}h_2(Y_j)+\sum_{j=m+k^*+1}^{m+k^*+l}h^*_{2,m}(Z_{j,m})+l\Delta_m+\frac{k^*}{\sqrt{m}}s_{1,m}+\frac{l}{\sqrt{m}}s_{1,m}^*\right)}{m+k^*+l}\end{align*}
On the one hand, setting $l=\lfloor N-k^*\rfloor$ and noting that $(m+N)/(m+\lfloor N\rfloor)=1+O(1/(m+N))$, $s_{1,m}/m=o(1)$ as well as $s_{1,m}^*/\sqrt{m}=o(1)$  this implies by Lemma~\ref{det.ass.gen} (a) and (b)
\begin{align*}
	&\frac{m+N}{\sqrt{N}}\,\sup_{(1-\delta)(N-k^*)<l\le N-k^*}\frac{\left|\sum_{j=m+1}^{m+k^*}h_2(Y_j)+\sum_{j=m+k^*+1}^{m+k^*+l}h^*_{2,m}(Z_{j,m})+l\Delta_m+\frac{k^*}{\sqrt{m}}s_{1,m}+\frac{l}{\sqrt{m}}s_{1,m}^*\right|}{m+k^*+l}\\
	&\ge\frac{m+N}{m+\lfloor N\rfloor}\left( \sign(\Delta_m)\frac{1}{\sqrt{N}}\sum_{j=m+1}^{m+k^*}h_2(Y_j)+\sign(\Delta_m)\frac{1}{\sqrt{N}}\sum_{j=m+k^*+1}^{m+N}h^*_{2,m}(Z_{j,m}) \right. \\*
	&\left.
	\phantom{\frac{m+N}{m+\lfloor N\rfloor}}\qquad+\frac{\lfloor N-k^*\rfloor |\Delta_m|}{\sqrt{N}}+\frac{k^* s_{1,m}\sign(\Delta_m)}{\sqrt{N\,m}}+\frac{\lfloor N-k^*\rfloor s_{1,m}^*\sign(\Delta_m)}{\sqrt{N\,m}}
 \right)\\
 &=  \sign(\Delta_m)\frac{1}{\sqrt{N}}\sum_{j=m+1}^{m+k^*}h_2(Y_j)+\sign(\Delta_m)\frac{1}{\sqrt{N}}\sum_{j=m+k^*+1}^{m+N}h^*_{2,m}(Z_{j,m})  \\*
	&
\qquad+ \frac{(N-k^*) |\Delta_m|}{\sqrt{N}}+\frac{k^* s_{1,m}\sign(\Delta_m)}{\sqrt{N\,m}}+\frac{ (N-k^*) s_{1,m}^*\sign(\Delta_m)}{\sqrt{N\,m}}
+o_P(1).
\end{align*}
On the other hand, by Lemma~\ref{det.ass.gen} b) and c) uniformly in $0\le \delta<\frac 1 2$ for $m$ large enough
\begin{align*}
	&\frac{m+N}{\sqrt{N}}\,\sup_{(1-\delta)(N-k^*)<l\le N-k^*}\frac{\left|\sum_{j=m+1}^{m+k^*}h_2(Y_j)+\sum_{j=m+k^*+1}^{m+k^*+l}h^*_{2,m}(Z_{j,m})+l\Delta_m+\frac{k^*}{\sqrt{m}}s_{1,m}+\frac{l}{\sqrt{m}}s_{1,m}^*\right|}{m+k^*+l}\\
&\le \sign(\Delta_m)\frac{1}{\sqrt{N}}\sum_{j=m+1}^{m+k^*}h_2(Y_j)\, (1+o(1))\\*
&\quad+ 
\sup_{(1-\delta)(N-k^*)<l\le N-k^*}\left(1-\frac{N-k^*-l}{m+k^*+l}\right)\sign(\Delta_m)\,
\frac{1}{\sqrt{N}}\,
	\sum_{j=m+k^*+1}^{m+k^*+l}h^*_{2,m}(Z_{j,m})\\*
	&\quad+\frac{m+N}{\sqrt{N}}\,\sup_{(1-\delta)(N-k^*)<l\le N-k^*}\frac{ l|\Delta_m|+\frac{k^*}{\sqrt{m}}s_{1,m}\sign(\Delta_m)+\frac{l}{\sqrt{m}}s_{1,m}^*\sign(\Delta_m) }{m+k^*+l}\\*
	&=\sign(\Delta_m)\frac{1}{\sqrt{N}}\sum_{j=m+1}^{m+k^*}h_2(Y_j)
	+\sup_{(1-\delta)(N-k^*)<l\le N-k^*}\sign(\Delta_m)\,
\frac{1}{\sqrt{N}}\,
	\sum_{j=m+k^*+1}^{m+k^*+l}h^*_{2,m}(Z_{j,m})\\*
	&\qquad + \frac{(N-k^*) |\Delta_m|}{\sqrt{N}}+\frac{k^* s_{1,m}\sign(\Delta_m)}{\sqrt{N\,m}}+\frac{ (N-k^*) s_{1,m}^*\sign(\Delta_m)}{\sqrt{N\,m}}
+o_P(1).
\end{align*}
In the last equation we used the fact, that the 
last supremum is taken in $l=\lfloor N-k^*\rfloor$ for $m$ large enough, because by Lemma~\ref{det.ass.gen} b) and (e) in combination with
\eqref{eq_s_m}  
 the following representation holds
\begin{align*}
	&\frac{ l|\Delta_m|+\frac{k^*}{\sqrt{m}}s_{1,m}\sign(\Delta_m)+\frac{l}{\sqrt{m}}s_{1,m}^*\sign(\Delta_m) }{m+k^*+l}\\
	&= |\Delta_m|\left( 1+\frac{s_{1,m}^*\sign(\Delta_m)}{\sqrt{m}|\Delta_m|} \right)\,\frac{l+(N-k^*)\frac{k^*s_{1,m}\sign(\Delta_m)}{\sqrt{m}(N-k^*)|\Delta_m|}\,\left( 1+\frac{s_{1,m}^*\sign(\Delta_m)}{\sqrt{m}|\Delta_m|} \right)^{-1}}{m+k^*+l}\\
&	=|\Delta_m| (1+o(1))\,\frac{l+ (m+k^*) o(1)}{m+k^*+l},
\end{align*}
where the last term is increasing in $l$ for $m$ large enough.

Putting the above together shows that
\begin{align*}
&\frac{m+N}{\sqrt{N}}\,\sup_{(1-\delta)(N-k^*)<l\le N-k^*}\frac{\left|\sum_{j=m+1}^{m+k^*}h_2(Y_j)+\sum_{j=m+k^*+1}^{m+k^*+l}h^*_{2,m}(Z_{j,m})+l\Delta_m+\frac{k^*}{\sqrt{m}}s_{1,m}+\frac{l}{\sqrt{m}}s_{1,m}^*\right|}{m+k^*+l}\\
&\qquad -
\frac{(N-k^*)|\Delta_m|+\frac{k^*}{\sqrt{m}}s_{1,m}\sign(\Delta_m)+\frac{N-k^*}{\sqrt{m}}s_{1,m}^*\sign(\Delta_m)}{\sqrt{N}}\\
&=\sign(\Delta_m)\left( \sqrt{\frac{k^*}{N}}\,\frac{1}{\sqrt{k^*}}\sum_{j=m+1}^{m+k^*}h_2(Y_j)
+\sqrt{\frac{N-k^*}{N}}\frac{1}{\sqrt{N-k^*}}\sum_{j=m+k^*+1}^{m+N}h^*_{2,m}(Z_{j,m}) \right)+o_P(1)\\
&+\sqrt{\frac{N-k^*}{N}}\,O\left( 
\sup_{(1-\delta)(N-k^*)<l\le N-k^*}\left|\frac{1}{\sqrt{N-k^*}}\,
	\sum_{j=m+k^*+1}^{m+k^*+l}h^*_{2,m}(Z_{j,m})
-\frac{1}{\sqrt{N-k^*}}\sum_{j=m+k^*+1}^{m+N}h^*_{2,m}(Z_{j,m})\right|
\right).
\end{align*}
As soon as $m^{\beta -1}\,\frac{\sqrt{m}|\Delta_m|}{c_m}\to \infty$, the factor in front of the last term converges to zero by Lemma~\ref{det.ass.gen}~a), while by Assumption~\ref{ass.stop} (c) the stochastic part is bounded in probability, such that the full term is $o_P(1)$.
If that is not the case, then by Assumption~\ref{ass.stop} (b)
\begin{align*}
&\sup_{(1-\delta)(N-k^*)<l\le N-k^*}\left|\frac{1}{\sqrt{N-k^*}}\,
	\sum_{j=m+k^*+1}^{m+k^*+l}h^*_{2,m}(Z_{j,m})
-\frac{1}{\sqrt{N-k^*}}\sum_{j=m+k^*+1}^{m+N}h^*_{2,m}(Z_{j,m})\right|\\
&
= \sup_{1-\delta<s<1}|W^*(s)-W^*(1)|+o_P(1),
\end{align*}
where by the almost sure continuity of a Wiener process, the last term converges to 0 for $\delta\to 0$. Because all the other $o$-terms were uniformly in $\delta$, this gives the result by Lemma~\ref{delta.stop2}, Assumption~\ref{ass.stop} (b) or (c) in addition to Lemma~\ref{det.ass.gen} a).\end{proof}

\subsection{Proof of Theorem~\ref{thm:cond}}
We are now ready to prove Theorem~\ref{thm:cond}.

\begin{proof}[Proof of Theorem \ref{thm:cond}]
	By Lemma~\ref{det.ass.gen} (c)  and Assumption~\ref{regass}(i) it holds
	\begin{align*}
	&\frac{m+N}{\sqrt{N}}\sup_{k^*< k\leq N}\frac{\left|\frac{1}{m}\sum_{i=1}^m\sum_{j=m+1}^{m+k^*}r(Y_i,Y_j)\right|}{m+k}
	=O_P\left( \sqrt{\frac{u(m)}{m^2}} \right)=o_P(1).
\end{align*}
With Assumption~\ref{ass.stop} (a)(i), Lemma~\ref{det.ass.gen} (c) and Theorem 3 in	\cite{Momineq} we get
\begin{align*}
	&\frac{m+N}{\sqrt{N}}\sup_{k^*< k\leq N}\frac{\left|\frac{1}{m}
	\sum_{i=1}^m\sum_{j=m+k^*+1}^{m+k}r^*_m(Y_i,Z_{j,m})
\right|}{m+k}
	=O_P\left( \sqrt{\frac{u(m)\log^2(m)}{m^2}}  \right)=o_P(1),
\end{align*}
showing that for any $\epsilon>0$ it holds for $m\to\infty$
\begin{align*}
	\E\left(P\left( \frac{m+N}{\sqrt{N}}\sup_{k^*< k\leq N}\frac{\left|R(m,k)\right|}{m+k}>\epsilon\,\Big|\,S_{1,m},S_{1,m}^*\right) \right)=
	P\left( \frac{m+N}{\sqrt{N}}\sup_{k^*< k\leq N}\frac{\left|R(m,k)\right|}{m+k}>\epsilon\right)
	\to 0.
\end{align*}
Hence
\begin{align*}
	P\left( \frac{m+N}{\sqrt{N}}\sup_{k^*< k\leq N}\frac{\left|R(m,k)\right|}{m+k}>\epsilon\,\Big|\,S_{1,m},S_{1,m}^*\right)=o_P(1).
\end{align*}
Because \eqref{eq_s_m} is also fulfilled in a $P$-stochastic sense  for $S_{1,m}$ and $S_{1,m}^*$ by Assumption~\ref{ass_neu_stoch}, the assertion follows by an application of the subsequence principle and Theorem~\ref{thm:cond:version}.
\end{proof}

\begin{proof}[Proof of Corollary \ref{cor:cond}]
	The corollary follows directly from  Theorem~\ref{thm:cond:version} because \eqref{eq_s_m} is fulfilled almost surely for $S_{1,m}$ and $S_{1,m}^*$ by Assumption~\ref{ass_as_S}.
\end{proof}

\subsection{Proofs of Section~\ref{sec:imp}}\label{proof_sec_imp}
In this section we prove Theorem~\ref{thm:aprime}, a proof of Theorem~\ref{thm:delay:early} can be found in \cite{diss}, Theorem 5.3.
\begin{proof}[Proof of Theorem~\ref{thm:aprime}]
	The arguments leading to \eqref{eq:am:approx} show that
	\begin{align}\label{eq_am3}
		a_m(S_{1,m},S_{1,m}^*)=\left(k^*+\frac{c_m\,\sqrt{m}}{|\Delta_m|}\right)\, \left( 1+o_P(1) \right).
	\end{align}
	The same assertion holds for $a_m(S_{1,m},0)$ as well as $a_m(0,0)$, where in the latter case $o_P(1)$ can be replaced by $o(1)$. Additionally, the assertions of Lemma~\ref{det.ass.gen} also hold with $o(1)$ replaced by $o_P(1)$ where appropriate.
Consequently,
\begin{align}\label{eq_new_ck2}
		\frac{b^2_m(S_{1,m},S_{1,m}^*)}{b^2_m(0,0)}={\frac{a_m(S_{1,m},S_{1,m}^*)}{a_m(0,0)}}\stackrel{P}{\rightarrow} 1.
	\end{align}
and assertion a) follows with Corollary~\ref{cor_uncond}. Furthermore,
\begin{align}\label{eq_new_ck}
	\frac{a_m(0,0)\,|\Delta_m|^2}{c_m^2}\ge \frac{|\Delta_m|\sqrt{m}}{c_m}\,(1+o(1))\to \infty.
\end{align}
By definition it holds
\begin{align*}
	&a_m(S_{1,m},S_{1,m}^*)-k^*\\
&	=	\frac{c_m\,a_m(S_{1,m},S_{1,m}^*)}{\sqrt{m}|\Delta_m|}-\frac{a_m(S_{1,m},S_{1,m}^*)}{\sqrt{m}|\Delta_m|}\,S_{1,m}^*\sign(\Delta_m)
	+\frac{k^*}{\sqrt{m}|\Delta_m|}(S_{1,m}^*-S_{1,m})\sign(\Delta_m)+\frac{c_m\sqrt{m}}{|\Delta_m|}\\
	&=\sqrt{m\,a_m(0,0)}\;\left(O_P(1)\,	\frac{c_m\,\sqrt{a_m(0,0)}}{m|\Delta_m|}
	+O_P\left( \frac{\sqrt{k^*}}{m\,|\Delta_m|} \right)
	+ o(1)
 \right), 
\end{align*}
where the last line follows from
 \eqref{eq_new_ck2}, \eqref{eq_new_ck} and by $k^*/a_m(0,0)=O(1)$ as in Lemma~\ref{det.ass.gen} a).
 
 For late changes it also holds  $a_m(0,0)/k^*=O(1)$  as in Lemma~\ref{det.ass.gen} a), for early changes that $a_m(0,0)=o_P(m)$ as can be seen e.g.\ by \eqref{eq_am3}. Consequently,
 \begin{align*}
	&\frac{a_m(S_{1,m},S_{1,m}^*)-k^*}{\sqrt{a_m(0,0)\,m}}
	=O_P(1)\,\frac{c_m}{\sqrt{m}\,|\Delta_m|}\,\max(1,m^{\frac{\beta-1}{2}})+o(1).
\end{align*}
Finally, by some calculations this yields
\begin{align*}
	&	\frac{|a_m(S_{1,m},0)-a_m(S_{1,m},S_{1,m}^*)|}{b_m(0,0)}\left( 1-\frac{c_m}{\sqrt{m}|\Delta_m|} \right)=\left(a_m(S_{1,m},S_{1,m}^*)-k^*  \right)\,\frac{|S_{1,m}^*|}{\sqrt{a_m(0,0)\,m}}\\
	&=O_P(1)\,\frac{c_m}{\sqrt{m}\,|\Delta_m|}\,\max(1,m^{\frac{\beta-1}{2}})+o(1),
\end{align*}
proving b).
Similarly,
\begin{align*}
	(a_m(0,0)-a_m(S_{1,m},0))\,\left( 1-\frac{c_m}{\sqrt{m}|\Delta_m|} \right)=\frac{k^*}{\sqrt{m}|\Delta_m|}\,S_{1,m}\sign(\Delta_m),
\end{align*}
such that
\begin{align*}
	\frac{a_m(0,0)-a_m(S_{1,m},0)}{b_m(0,0)}=O_P(1)\,\frac{k^*}{\sqrt{a_m(0,0)\,m}},
\end{align*}
where the last term converges to zero for sublinear changes only as in Lemma~\ref{det.ass.gen} a), completing the proof of c).
\end{proof}
\begin{proof}[Proof of Remark~\ref{rem_1_imp}]
	\begin{align*}
	&	
	\frac{a_m(0,0)-\left(k^*+\frac{c_m\sqrt{m}}{|\Delta_m|}\right)}{b_m(0,0)}
	=\frac{a_m(0,0)}{b_m(0,0)}  \frac{c_m}{\sqrt{m}|\Delta_m| }	= c_m\,\sqrt{\frac{a_m(0,0)}{m}},
	\end{align*}
	which converges to zero in the sublinear case only (see e.g.~\eqref{eq_am3} and Lemma~\ref{det.ass.gen} a).
\end{proof}

\begin{proof}[Proof of Remark~\ref{rem:gamma:early}]
If $\lim_{m\to\infty}(k^*/m)^{1-\gamma_2}\,\sqrt{m}|\Delta_m|=0$, then by (5.15) in \cite{diss} it holds
	\begin{align*}
		a_m(\gamma_2)=(1+o(1))\,\left( m\,c_{\gamma_2}^{\frac{1}{1-\gamma_2}}\,\left(\sqrt{m}|\Delta_m|\right)^{\frac{1}{\gamma_2-1}} \right).
	\end{align*}
Then either an analogous assertion holds for $a_m(\gamma_1)$ with $\gamma_1>\gamma_2$ such that 
$$	\frac{a_m(\gamma_1)}{a_m(\gamma_2)}=O(1) (\sqrt{m}|\Delta_m|)^{1/(\gamma_1-1)-1/(\gamma_2-1)}\to 0,$$ 
or by (5.15) in \cite{diss} it holds $a_m(\gamma_1)=O(k^*)$, such that
$$
\frac{a_m(\gamma_1)}{a_m(\gamma_2)}=O(1)\left( \left(\frac{k^*}{m}\right)^{1-\gamma_2} \sqrt{m} |\Delta_m|\right)^{\frac{1}{1-\gamma_2}}\to 0,
$$
such that $a_m(\gamma_1)<a_m(\gamma_2)$ for $m$ large enough.  

Consider now the case, where $\liminf_{m\to\infty}(k^*/m)^{1-\gamma_2}\,\sqrt{m}|\Delta_m|>0$. Then an analogous assertion also holds with $\gamma_1>\gamma_2$ (noting that in the sublinear case $k^*/m<1$ eventually, so that the expression is increasing in $\gamma$), such that  by (5.15) in \cite{diss} we get that $a_m(\gamma_1)/a_m(\gamma_2)$ is bounded.
Furthermore, by definition
\begin{align*}
	a_m(\gamma)=k^*+\frac{c_{\gamma} m^{1/2}}{|\Delta_m|}\,\left(\frac{a_m(\gamma)}{m}\right)^{\gamma},
\end{align*}
such that \begin{align*}
	\frac{a_m(\gamma_1)-k^*}{a_m(\gamma_2)-k^*}=\frac{c_{\gamma_1}}{c_{\gamma_2}}\,\left(\frac{a_m(\gamma_1)}{a_m(\gamma_2)}\right)^{\gamma_2}\, \left( \frac{a_m(\gamma_1)}{m} \right)^{\gamma_1-\gamma_2}\to 0
\end{align*}
 by Lemma 5.4 (i) in \cite{diss}, completing the proof in this case.
\end{proof}

\subsection*{Acknowledgements}
This work has been supported by the Research Training Group "Mathematical Complexity Reduction" (314838170, GRK 2297 MathCoRe) and by the Collaborative Research Center "Statistical modeling of nonlinear dynamic processes" (SFB 823, Teilprojekt C1) of the Deutsche Forschungsgemeinschaft (DFG, German Research Foundation).

\bibliography{BIB}

\begin{thebibliography}{10}

\bibitem{aue2004delay}
A.~Aue and L.~Horv{\'a}th.
\newblock Delay time in sequential detection of change.
\newblock {\em Statistics \& Probability Letters}, 67(3):221--231, 2004.

\bibitem{aue2006change}
A.~Aue, L.~Horv{\'a}th, M.~Hu{\v{s}}kov{\'a}, and P.~Kokoszka.
\newblock Change-point monitoring in linear models.
\newblock {\em The Econometrics Journal}, 9(3):373--403, 2006.

\bibitem{aue2008monitoring}
A.~Aue, L.~Horv{\'a}th, P.~Kokoszka, and J.~Steinebach.
\newblock Monitoring shifts in mean: Asymptotic normality of stopping times.
\newblock {\em Test}, 17(3):515--530, 2008.

\bibitem{aue2009delay}
A.~Aue, L.~Horv{\'a}th, and M.~Reimherr.
\newblock Delay times of sequential procedures for multiple time series
  regression models.
\newblock {\em Journal of Econometrics}, 149(2):174--190, 2009.

\bibitem{berkes2004sequential}
I.~Berkes, E.~Gombay, L.~Horv{\'a}th, and P.~Kokoszka.
\newblock Sequential change-point detection in garch (p, q) models.
\newblock {\em Econometric theory}, 20(6):1140--1167, 2004.

\bibitem{huvskova2009delay}
A.~{\v{C}}ern{\'\i}kov{\'a}, M.~Hu{\v{s}}kov{\'a}, Z.~Pr{\'a}{\v{s}}kov{\'a},
  and J.~Steinebach.
\newblock Delay time in monitoring jump changes in linear models.
\newblock {\em Statistics}, 47(1):1--25, 2013.

\bibitem{Chu}
C.-S. Chu, M.~Stinchcombe, and H.~White.
\newblock Monitoring structural change.
\newblock {\em Econometrica}, 64:{1045--1065}, 1996.

\bibitem{Csorg}
M.~Cs{\"o}rg{\H{o}} and L.~Horv{\'a}th.
\newblock Invariance principles for changepoint problems.
\newblock {\em Journal of Multivariate Analysis}, 27:{151--168}, 1988.

\bibitem{Dehl}
H.~Dehling, R.~Fried, and M.~Wendler.
\newblock Change-point detection under dependence based on two-sample
  u-statistics.
\newblock {\em Dawson, Kulik, Ould Haye, Szyszkowicz, Zhao (Eds.) Asymptotic
  Laws and Methods in Stochastics - A Volume in Honour of Miklos
  Cs{\"o}rg{\H{o}}, Springer, New York}, 2015.

\bibitem{Fremdt}
S.~Fremdt.
\newblock Asymptotic distribution of the delay time in page's sequential
  procedure.
\newblock {\em Journal of Statistical Planning and Inference}, 145:{74--91},
  2014.

\bibitem{gosmann2019new}
J.~G{\"o}smann, T.~Kley, and H.~Dette.
\newblock A new approach for open-end sequential change point monitoring.
\newblock {\em arXiv preprint arXiv:1906.03225}, 2019.

\bibitem{gut2002truncated}
A.~Gut and J.~Steinebach.
\newblock Truncated sequential change-point detection based on renewal counting
  processes.
\newblock {\em Scandinavian journal of statistics}, 29(4):693--719, 2002.

\bibitem{gut2009truncated}
A.~Gut and J.~Steinebach.
\newblock Truncated sequential change-point detection based on renewal counting
  processes ii.
\newblock {\em Journal of statistical planning and inference},
  139(6):1921--1936, 2009.

\bibitem{hlavka2012monitoring}
Z.~Hl{\'a}vka, M.~Hu{\v{s}}kov{\'a}, C.~Kirch, and S.~G. Meintanis.
\newblock Monitoring changes in the error distribution of autoregressive models
  based on fourier methods.
\newblock {\em Test}, 21(4):605--634, 2012.

\bibitem{hlavka2016bootstrap}
Z.~Hl{\'a}vka, M.~Hu{\v{s}}kov{\'a}, C.~Kirch, and S.~G. Meintanis.
\newblock Bootstrap procedures for online monitoring of changes in
  autoregressive models.
\newblock {\em Communications in Statistics-Simulation and Computation},
  45(7):2471--2490, 2016.

\bibitem{Hoeff}
W.~Hoeffding.
\newblock A class of statistics with asymptotically normal distribution.
\newblock {\em The Annals of Mathematical Statistics}, 19:{293--325}, 1948.

\bibitem{Hor}
L.~Horv{\'a}th, M.~Hu\v{s}kov{\'a}, P.~Kokoszka, and J.~Steinebach.
\newblock Monitoring changes in linear models.
\newblock {\em Journal of Statistical Planning and Inference}, 126:{225--251},
  2004.

\bibitem{huvskova2008bootstrapping}
M.~Hu{\v{s}}kov{\'a} and C.~Kirch.
\newblock Bootstrapping confidence intervals for the change-point of time
  series.
\newblock {\em Journal of Time Series Analysis}, 29(6):947--972, 2008.

\bibitem{huvskova2005monitoring}
M.~Hu{\v{s}}kov{\'a} and A.~Koubkov{\'a}.
\newblock Monitoring jump changes in linear models.
\newblock {\em Journal of Statistical Research}, 39(2):51--70, 2005.

\bibitem{kirch2008bootstrapping}
C.~Kirch.
\newblock Bootstrapping sequential change-point tests.
\newblock {\em Sequential Analysis}, 27(3):330--349, 2008.

\bibitem{seqcp}
C.~Kirch and C.~Stoehr.
\newblock Sequential change point tests based on {U}-statistics.
\newblock {\em arXiv preprint arXiv:1912.08580}, 2019.

\bibitem{Est}
C.~Kirch and J.~Tadjuidje~Kamgaing.
\newblock On the use of estimating functions in monitoring time series for
  change points.
\newblock {\em Journal of Statistical Planning and Inference}, 161:{25--49},
  2015.

\bibitem{kirch2018modified}
C.~Kirch and S.~Weber.
\newblock Modified sequential change point procedures based on estimating
  functions.
\newblock {\em Electronic Journal of Statistics}, 12(1):1579--1613, 2018.

\bibitem{Momineq}
F.~M{\'o}ricz.
\newblock Moment inequalities and the strong laws of large numbers.
\newblock {\em Zeitschrift f\"ur Wahrscheinlichkeitstheorie und Verwandte
  Gebiete}, 35:{299--314}, 1976.

\bibitem{Page1}
E.~Page.
\newblock Continuous inspection schemes.
\newblock {\em Biometrika}, 41:{100--115}, 1954.

\bibitem{Page2}
E.~Page.
\newblock Control charts with warning lines.
\newblock {\em Biometrika}, 42:{243--257}, 1955.

\bibitem{diss}
C.~Stoehr.
\newblock {\em Sequential change point procedures based on {U}-statistics and
  the detection of covariance changes in functional data}.
\newblock dissertation, Otto-von-Guericke University Magdeburg, 2019.

\end{thebibliography}

\end{document}